\documentclass[11pt,a4paper,reqno]{amsart}

\usepackage[T1]{fontenc}
\usepackage[english]{babel}
\usepackage{mathtools}
\usepackage{amssymb}
\usepackage{libertinus}
\usepackage[cal=boondoxo,bb=ams]{mathalfa}
\usepackage{microtype}
\usepackage{enumitem}
\usepackage{array,booktabs,tabularx}
\usepackage{etoolbox}

\allowdisplaybreaks[2]
\usepackage[
 a4paper,
 left=30mm,
 right=30mm,
 top=27mm,
 bottom=30mm,
 headsep=8mm,
 footskip=13mm,
 heightrounded
]{geometry}

\setlist[itemize]{leftmargin=2.1em,itemsep=0.25em,topsep=0.45em,parsep=0pt}
\setlist[enumerate]{leftmargin=2.1em,itemsep=0.25em,topsep=0.45em,parsep=0pt}

\numberwithin{equation}{section}

\makeatletter
\def\@settitle{%
 \begin{center}
  \vspace*{-0.4em}
  {\normalfont\bfseries\fontsize{17}{21}\selectfont\@title\par}
  \vspace{0.7em}
 \end{center}}
\renewcommand\section{\@startsection{section}{1}{\z@}%
 {1.5\baselineskip plus 0.2\baselineskip minus 0.1\baselineskip}%
 {0.65\baselineskip}{\normalfont\Large\bfseries}}
\renewcommand\subsection{\@startsection{subsection}{2}{\z@}%
 {1.15\baselineskip plus 0.2\baselineskip minus 0.1\baselineskip}%
 {0.45\baselineskip}{\normalfont\large\bfseries}}
\renewcommand\subsubsection{\@startsection{subsubsection}{3}{\z@}%
 {0.9\baselineskip plus 0.15\baselineskip minus 0.1\baselineskip}%
 {0.35\baselineskip}{\normalfont\normalsize\bfseries}}
\makeatother

\makeatletter
\patchcmd{\@setauthors}{\centering\footnotesize}{\centering\normalsize}{}{}
\patchcmd{\@setauthors}{\MakeUppercase{\authors}}{\authors}{}{}
\patchcmd{\maketitle}{\uppercasenonmath\shorttitle}{}{}{}
\patchcmd{\maketitle}{\@nx\MakeUppercase{\the\toks@}}{\the\toks@}{}{}
\makeatother

\usepackage{hyperref}
\hypersetup{hidelinks,pdfencoding=auto}
\usepackage[nameinlink,capitalise,noabbrev]{cleveref}

\theoremstyle{plain}
\newtheorem{theorem}{Theorem}[section]
\newtheorem{lemma}[theorem]{Lemma}
\newtheorem{prop}[theorem]{Proposition}
\newtheorem{coro}[theorem]{Corollary}

\theoremstyle{definition}

\theoremstyle{remark}
\newtheorem{remark}[theorem]{Remark}

\newcommand{\ml}{\mathcal}
\newcommand{\mb}{\mathbb}
\newcommand{\dd}{\,\mathrm{d}}
\newcommand{\lin}{\mathrm{lin}}
\newcommand{\reg}{\mathrm{reg}}
\newcommand{\low}{\mathrm{low}}
\newcommand{\midf}{\mathrm{mid}}
\newcommand{\high}{\mathrm{high}}

\DeclareMathOperator{\supp}{supp}

\title[Critical exponent for strongly damped waves]{The critical exponent for the three-dimensional semilinear wave equation with strong damping}

\author[W. Chen]{Wenhui Chen}
\address{School of Mathematics and Information Science, Guangzhou University, Guangzhou 510006, P. R. China}
\email{wenhui.chen.math@gmail.com}

\keywords{semilinear strongly damped wave equation, critical exponent, finite-time blow-up, positive fundamental solution, dimension descent, small data global in-time existence}
\subjclass[2020]{Primary 35B33; Secondary 35L71, 35B44, 35A01}
\date{}

\begin{document}

\begin{abstract}
	In this manuscript, we determine the critical exponent for the three-dimensional semilinear wave equation with strong damping and thereby resolve an open problem posed in 2014. The threshold for the power nonlinearity $|u|^p$ is
	\begin{align*}
	p=p_{\mathrm{crit}}=\frac{7}{3}.
	\end{align*}
	Sufficiently small data generate global in-time solutions for $p>\frac{7}{3}$, whereas there exist arbitrarily small smooth compactly supported data whose solutions blow up in finite time for $1<p\leqslant\frac{7}{3}$. Positivity of the full velocity fundamental solution, combined with a dimension-descent formula, yields a positive half-line kernel and replaces the finite propagation property unavailable for strongly damped waves. This leads to a nonlinear lower-bound parabolic iteration without radial symmetry or pointwise sign assumptions on the initial data. At the critical power, a refined slicing argument on moving shells converts the borderline logarithmic gain into the growth required for blow-up.
\end{abstract}

\maketitle

\section{Introduction}

The purpose of this paper is to determine the critical exponent for the three-dimensional Cauchy problem
\begin{align}\label{Main-Problem}
	\begin{cases}
		u_{tt}-\Delta u-\Delta u_t=|u|^p,&x\in\mb{R}^3,\ t>0,\\
		(u,u_t)(0,x)=(u_0,u_1)(x),&x\in\mb{R}^3,
	\end{cases}
\end{align}
with the power exponent $p>1$. The first systematic study, to the best of the author's knowledge, of \eqref{Main-Problem} established small data global in-time existence for $p>\frac{5}{2}$ and finite-time blow-up for $1<p\leqslant2$ in \cite[Theorem~2 and Theorem~9]{DAbbicco-Reissig=2014}, leaving the unresolved range
\begin{align*}
	2<p\leqslant\frac{5}{2}.
\end{align*}
 The determination of the critical exponent was explicitly left open in \cite[Remark~10]{DAbbicco-Reissig=2014} and was subsequently reiterated as an open problem in \cite{Kainane-Kainane-Reissig=2020,Jleli-Samet-Vetro=2021,Fino-Hamza=2022,Kirane-Fino-Kerbal-Laadhari=2024,DAbbicco-Lagioia=2025,Chen-Girardi=2025}. Additionally, the critical exponent for strongly non-effective $\sigma$-evolution equations was determined in \cite{DAbbicco-Ebert=2022} under the restriction $\sigma\neq1$. The wave endpoint $\sigma=1$, which includes \eqref{Main-Problem}, remained outside that theory. In the present paper, we close this gap in three dimensions and prove that the critical exponent is $p_{\mathrm{crit}}=\frac{7}{3}$.

The difficulty in closing this gap is intrinsic to the mixed hyperbolic-parabolic structure of the operator. The strong damping term $-\Delta u_t$ destroys finite propagation speed of the classical wave operator, while its dissipative effect varies substantially across the frequency space. At low frequencies, wave oscillations are modulated by diffusive damping, which gives rise to the diffusion-wave behavior established in \cite{Ikehata-Todorova-Yordanov=2013,Ikehata=2014,Ikehata-Onodera=2017}. At high frequencies, the equation enters an overdamping regime, where oscillations disappear and two distinct decay rates arise. The corresponding estimates for the linear problem require a different analysis from that in the oscillatory region, as shown in \cite{DAbbicco-Ebert=2025}. Consequently, the blow-up problem admits neither the support localization underlying the classical wave approach nor a reduction to a purely parabolic model. The available test function arguments do not detect the full contribution of the strong damping term and yield only the previously known blow-up range $1<p\leqslant 2$, as shown in \cite{DAbbicco-Reissig=2014,Fino-Hamza=2022}. Although the global side can be closed by combining frequency-localized estimates for the linear problem, the blow-up side requires a new lower-bound mechanism that incorporates wave propagation, diffusive spreading, and the growth generated by the power nonlinearity.

Several aspects of the linear problem associated with \eqref{Main-Problem} have been studied extensively. Optimal growth/decay estimates, diffusion phenomena, and large-time asymptotic profiles were obtained in \cite{Shibata=2000,DAbbicco-Ebert=2014,Ikehata-Todorova-Yordanov=2013,Ikehata=2014,Ikehata-Onodera=2017,Michihisa=2021,Ikehata-Takeda=2026}. More general semilinear problems for structurally damped wave and $\sigma$-evolution equations can be found in \cite{DAbbicco-Ebert=2017,Pham-Kainane-Reissig=2015,Dao-Reissig=2019,Ikehata-Takeda=2017,Kainane-Kainane-Reissig=2020} and the references therein. Related nonexistence results for time-dependent or additional damping terms were established in \cite{Jleli-Samet-Vetro=2021,Fino-Hamza=2022,Kirane-Fino-Kerbal-Laadhari=2024}. Nevertheless, none of these results provides matching global in-time existence and blow-up thresholds for the particular model considered here \eqref{Main-Problem}.

The value of the critical exponent is already suggested by the sharp decay of the low-frequency part of the homogeneous propagator. Let $S_1(t)$ denote the velocity (second) propagator for the homogeneous equation associated with \eqref{Main-Problem}, and let $S_1^{\low}(t)$ be its low-frequency component. It follows from \cite[Theorem~6, Remark~2 and Section~2.1]{DAbbicco-Ebert=2025} that
\begin{align*}
	\|S_1^{\low}(t)f(\cdot)\|_{L^q}\lesssim\langle t\rangle^{-\frac32+\frac{5}{2q}}\|f\|_{L^1}\ \ \text{for}\ \ q>2.
\end{align*}
If a small nonlinear solution $u=u(t,x)$ obeys the corresponding decay with size $\varepsilon$ for its initial data, namely,
\begin{align*}
	\|u(t,\cdot)\|_{L^p}\lesssim\varepsilon\langle t\rangle^{-\frac32+\frac{5}{2p}}\ \ \text{for some}\ \ p>2,
\end{align*}
then
\begin{align*}
	\||u(t,\cdot)|^p\|_{L^1}=\|u(t,\cdot)\|_{L^p}^p\lesssim\varepsilon^p\langle t\rangle^{-\frac{3p-5}{2}}.
\end{align*}
The Duhamel source is therefore expected to be integrable in time precisely when
\begin{align*}
	-\frac{3p-5}{2}<-1\ \ \Leftrightarrow\ \ p>\frac{7}{3}.
\end{align*}
This identifies $\frac{7}{3}$ as the natural candidate threshold for the global in-time existence for \eqref{Main-Problem}. Establishing the matching blow-up result is substantially more delicate because the strongly damped wave equation lacks finite propagation speed, while at the critical exponent the subcritical iteration only reproduces the first lower bound and yields no logarithmic growth.

We demonstrate that $\frac{7}{3}$ is the critical exponent for \eqref{Main-Problem}. For the supercritical range $p>\frac{7}{3}$, small data global in-time existence is obtained through a frequency-separated fixed point argument that combines the sharp low-frequency decay with suitable $L^2$-based estimates for the remaining frequencies. Together with the previously known blow-up result for $1<p\leqslant2$, the new blow-up analysis for
\begin{align*}
	2<p\leqslant\frac{7}{3}
\end{align*}
completes the classification.

To identify a new blow-up mechanism for the strongly damped waves, our main contribution is to construct a substitute for the finite propagation property lost under strong damping. We prove positivity of the full velocity fundamental solution and use a dimension-descent identity to derive a positive Dirichlet kernel on the half-line. Spherical averaging and Jensen's inequality may reduce the three-dimensional problem to a nonlinear lower-bound integral inequality on the half-line. This construction replaces the support localization available for semilinear classical wave equations. It requires neither radial symmetry nor pointwise positivity of the initial data, and the only sign condition is imposed on the spatial integral of the initial velocity $u_1$. In the subcritical range $2<p<\frac{7}{3}$, a finite iteration in a parabolic characteristic region produces a nondecaying lower bound. This lower bound is then plugged into the Duhamel term to obtain polynomial growth in a time-dependent strict interior cone, after which comparison with a scalar ODE forces finite-time blow-up. At the critical exponent, the power iteration used in the subcritical case only reproduces the existing lower bound and yields no further growth. Alternately, a refined iteration on moving shells turns the first logarithmic gain into a super-polynomial lower bound, which is then sufficient for a similar ODE blow-up argument.

\medskip
\paragraph{Notation.}
Throughout this paper, the constants $C$ and $c$ are positive and may change from line to line. We write $f\lesssim g$ if $f\leqslant Cg$, $f\gtrsim g$ if $g\lesssim f$, and $f\approx g$ if both $f\lesssim g$ and $g\lesssim f$ hold. A subscript attached to these symbols indicates the parameters on which the implicit constant is allowed to depend. The underlying spatial domain is always $\mb R^3$ whenever no domain is displayed explicitly.  The symbol $\ast_{(x)}$ denotes convolution with respect to the spatial variable. The Fourier transform and its inverse are defined, respectively, via
\begin{align*}
	\widehat f(\zeta):=\ml F[f](\zeta)
	&:=\int_{\mb R^n}\mathrm{e}^{-ix\cdot\zeta}f(x)\dd x,\\
	\ml F^{-1}[g](x)
	&:=\frac{1}{(2\pi)^n}\int_{\mb R^n}\mathrm{e}^{ix\cdot\zeta}g(\zeta)\dd\zeta.
\end{align*}
 For a function $h=h(t)$, its Laplace transform is written as
\begin{align*}
	\widetilde h(s):=\ml L[h](s):=\int_0^\infty\mathrm{e}^{-st}h(t)\dd t.
\end{align*}
We define $\langle t\rangle:=1+t$ for $t\geqslant0$ and $\langle z\rangle:=(1+|z|^2)^{\frac{1}{2}}$ for $z\in\mb R^n$. Moreover, $B_R:=\{x\in\mb R^3:|x|<R\}$. For a function $f=f(x)$ on $\mb R^3$, its spherical mean is defined by
\begin{align*}
	\ml M[f](r):=\frac{1}{4\pi}\int_{\mb S^2}f(r\omega)\dd\omega\ \ \text{for}\ \ r\geqslant0,
\end{align*}
where $\mathbb S^2$ is the unit sphere in $\mathbb R^3$ equipped with the standard surface measure $\mathrm{d}\omega$.

\section{Main results}\label{Section-Main}

Let us consider the same initial data space as \cite[Theorem 2]{DAbbicco-Reissig=2014}, namely,
\begin{align*}
	\ml{A}:=(L^1\cap H^2)\times(L^1\cap L^2),
\end{align*}
endowed with its norm
\begin{align*}
	\|(u_0,u_1)\|_{\ml A}:=\|u_0\|_{L^1}+\|u_0\|_{H^2}+\|u_1\|_{L^1}+\|u_1\|_{L^2}.
\end{align*}

Let $S_0(t)$ and $S_1(t)$ denote the position $u|_{t=0}$ and velocity $u_t|_{t=0}$ propagators for the homogeneous equation associated with \eqref{Main-Problem}, respectively. Thus, the solution to the homogeneous problem with initial data $(v_0,v_1)$ is given by $S_0(t)v_0+S_1(t)v_1$.

For $T\in(0,\infty]$, a function
\begin{align*}
	u\in\ml{C}\bigl([0,T),H^2\bigr)\cap\ml{C}^1\bigl([0,T),L^2\bigr)
\end{align*}
is called a mild Sobolev (energy) solution to \eqref{Main-Problem} on $[0,T)$ if
\begin{align}\label{Mild-Formula}
	u(t,\cdot)=S_0(t)u_0(\cdot)+S_1(t)u_1(\cdot)+\int_0^tS_1(t-s)|u(s,\cdot)|^p\dd s\ \ \text{in}\ \ H^2
\end{align}
holds for every $t\in[0,T)$. A mild Sobolev solution is called maximal if it cannot be extended to a larger time interval in the same class, whose maximal existence time is denoted by $T_{\max}$.

For $p>\frac{7}{3}$, let us introduce two parameters $\alpha_p:=\frac{3}{2}-\frac{5}{2p}$ and $\beta_p:=\min\left\{\frac{5}{4},\gamma_p\right\}$ carrying $\gamma_p:=p\alpha_p=\frac{3p-5}{2}$ in the forthcoming decay rates.

\begin{theorem}[Small data global in-time existence]\label{Thm-Global}
	Let $p>p_{\mathrm{crit}}=\frac{7}{3}$. There exists $\delta=\delta(p)>0$ such that, for every $(u_0,u_1)\in\ml A$ with $\|(u_0,u_1)\|_{\ml A}\leqslant\delta$, the Cauchy problem \eqref{Main-Problem} admits a unique global in-time mild Sobolev solution. Furthermore, the solution satisfies the following decay estimates:
	\begin{align*}
		\|u(t,\cdot)\|_{L^p}&\lesssim\langle t\rangle^{-\alpha_p}\|(u_0,u_1)\|_{\ml A},\\
		\|u(t,\cdot)\|_{L^2}&\lesssim\langle t\rangle^{-\frac14}\|(u_0,u_1)\|_{\ml A},\\
		\|u_t(t,\cdot)\|_{L^2}+\|\nabla u(t,\cdot)\|_{L^2}&\lesssim\langle t\rangle^{-\frac34}\|(u_0,u_1)\|_{\ml A},\\
		\|\nabla^2u(t,\cdot)\|_{L^2}&\lesssim\langle t\rangle^{-\beta_p}\|(u_0,u_1)\|_{\ml A},
	\end{align*}
	for every $t\geqslant0$.
\end{theorem}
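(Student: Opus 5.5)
The plan is a contraction argument in a weighted-in-time Banach space. Define
\begin{align*}
X(T):=\ml C\bigl([0,T),H^2\bigr)\cap\ml C^1\bigl([0,T),L^2\bigr)
\end{align*}
with norm
\begin{align*}
\|u\|_{X(T)}:=\sup_{0\leqslant t<T}\Bigl(\langle t\rangle^{\alpha_p}\|u(t)\|_{L^p}+\langle t\rangle^{\frac14}\|u(t)\|_{L^2}+\langle t\rangle^{\frac34}\bigl(\|u_t(t)\|_{L^2}+\|\nabla u(t)\|_{L^2}\bigr)+\langle t\rangle^{\beta_p}\|\nabla^2u(t)\|_{L^2}\Bigr),
\end{align*}
and the map $N[u](t):=S_0(t)u_0+S_1(t)u_1+\int_0^tS_1(t-s)|u(s)|^p\dd s$. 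The goal is to show
\begin{align*}
\|N[u]\|_{X(T)}\leqslant C\|(u_0,u_1)\|_{\ml A}+C\|u\|_{X(T)}^p
\end{align*}
and the corresponding Lipschitz bound with factor $C\|u-v\|_{X(T)}\bigl(\|u\|_{X(T)}^{p-1}+\|v\|_{X(T)}^{p-1}\bigr)$, uniformly in $T$. Banach's fixed point theorem on a small ball then gives the global solution together with the stated decay rates; uniqueness follows in the usual way from local-in-time Lipschitz estimates.

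\textbf{Linear estimates.} Split every propagator with a smooth Fourier cutoff into low ($|\zeta|\leqslant\varepsilon_0$), middle and high ($|\zeta|\geqslant N_0$) parts.

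For low frequencies, the symbol behaves like $\mathrm e^{-|\zeta|^2t/2}\sin(|\zeta|t)/|\zeta|$. The $L^1\to L^q$ bound quoted from \cite{DAbbicco-Ebert=2025} gives the $L^p$ rate $\langle t\rangle^{-\alpha_p}$, since $p>7/3>2$. Plancherel gives $L^1\to L^2$ bounds with $\langle t\rangle^{-1/4}$ for $S_1$, and with one extra $t^{-1/2}$ per derivative or per time derivative.

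For middle frequencies the decay is exponential.

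For high frequencies the roots are $\lambda_\pm\approx-1$ and $\lambda\approx-|\zeta|^2$. Hence $S_1$ gains $|\zeta|^{-2}$ with $\mathrm e^{-ct}$ decay, and $S_0$ has an $\mathrm e^{-ct}$ part acting on $H^2$ data. For the $L^p$ norm of the high and middle parts I use Sobolev embedding $H^2\hookrightarrow L^p$, which is valid for $p\in[2,\infty)$ in three dimensions. This makes the linear part bounded by $C\langle t\rangle^{-\text{rate}}\|(u_0,u_1)\|_{\ml A}$. The high-frequency part of $S_1f$ lies in $H^2$ with only $f\in L^2$, which is why $\ml A$ requires $u_1\in L^2$.

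\textbf{Nonlinear estimates.} Set $F(s)=|u(s)|^p$. Then
\begin{align*}
\|F(s)\|_{L^1}=\|u(s)\|_{L^p}^p\leqslant\langle s\rangle^{-\gamma_p}\|u\|_{X}^p,\qquad \gamma_p>1.
\end{align*}
For the $L^2$ norm, Gagliardo--Nirenberg between $L^2$, $\dot H^1$ and $\dot H^2$ bounds $\|u\|_{L^{2p}}^p$. Since $2p\leqslant6$ only if $p\leqslant3$, for larger $p$ one interpolates up to $L^\infty$ using $\dot H^2$; this produces decay of some positive rate at least. I therefore estimate $\|F(s)\|_{L^2}\lesssim\langle s\rangle^{-\kappa_p}\|u\|_X^p$ for some $\kappa_p>0$.

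In the Duhamel integral, apply $(L^1\cap L^2)\to$ target estimates on $[0,t/2]$ and on $[t/2,t]$, using the integrability of $\langle s\rangle^{-\gamma_p}$. For the low-frequency part this yields exactly the linear rates $\langle t\rangle^{-\alpha_p}$, $\langle t\rangle^{-1/4}$ and $\langle t\rangle^{-3/4}$. The only subtlety is the near-singular integral over $[t/2,t]$, where I use $L^1$ data and the bound $\langle t\rangle^{-\gamma_p}\int_{t/2}^t\langle t-s\rangle^{-\theta}\dd s$. That integral is harmless whenever the low-frequency rate $\theta<1$, and gives $\langle t\rangle^{1-\theta-\gamma_p}$, which is fine since $\gamma_p>1$. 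When $\theta\geqslant1$, as for $\nabla^2$ with $\theta=5/4$, the same integral yields only $\langle t\rangle^{-\min\{5/4,\gamma_p\}}$ up to logarithms. This explains $\beta_p=\min\{5/4,\gamma_p\}$; at equality one uses the $L^2$ part of the data or $\gamma_p\neq 5/4$ to remove the logarithm.

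The middle- and high-frequency Duhamel pieces are controlled by $\int_0^t\mathrm e^{-c(t-s)}\|F(s)\|_{L^2}\dd s$. Here $S_1^{\high}$ maps $L^2$ into $H^2$ with exponential decay, because the slow root $\approx-|\zeta|^2$ multiplies $|\zeta|^{-2}$ and so acts like a heat semigroup on $H^2$. These pieces are controlled if $\kappa_p$ is at least the required rate.

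\textbf{Main obstacle.} I expect the main obstacle to be this step: securing $\|F(s)\|_{L^2}$ decay at rate $\geqslant\beta_p$, and handling the high-frequency heat-like component. The latter provides only parabolic smoothing, not exponential decay, for the $\dot H^2$ norm of $\int_{t/2}^t S_1^{\high}(t-s)F\dd s$. My first attempt is to estimate the high part in $\dot H^2$ by $\|F\|_{L^2}$ via maximal-regularity-type bounds $\int\mathrm e^{-c(t-s)}\dd s$. For the rate of $\|F\|_{L^2}$ I use $\|u\|_{L^{2p}}\lesssim\|u\|_{L^p}^{1-\theta}\|\nabla^2u\|_{L^2}^{\theta}$ for suitable $\theta$, and check that the resulting exponent exceeds $\beta_p$ for all $p>7/3$. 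If it does not, I weaken the $\dot H^2$ weight in the norm accordingly.
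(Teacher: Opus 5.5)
Your plan is the paper's proof: the same space $X(T)$ with the same weights, low frequencies estimated from $L^1$ data (the D'Abbicco--Ebert $L^1$--$L^q$ bound and Plancherel) together with $\int_0^t\langle t-s\rangle^{-a}\langle s\rangle^{-b}\dd s\lesssim\langle t\rangle^{-\min\{a,b\}}$ for $a>0$, $b>1$, middle and high frequencies estimated from $L^2$ data with a factor $\mathrm{e}^{-c(t-s)}$, and a contraction on a small ball.

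The only nonroutine step is the one you left open. You also hedged it with a fallback, weakening the $\dot H^2$ weight, which would no longer give the stated rate $\beta_p$. What is needed is that the $L^2$ decay rate $\kappa_p$ of $|u|^p$ be at least $\max\{\alpha_p,\frac34,\beta_p\}$, not just $\beta_p$. The middle/high Duhamel part feeds every component of the norm, and $\alpha_p>\beta_p=\frac54$ once $p>10$.

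Your interpolation does deliver this. The inequality $\|u\|_{L^{2p}}\lesssim\|u\|_{L^p}^{1-\theta}\|\nabla^2u\|_{L^2}^{\theta}$ holds with $\theta=\frac{3}{p+6}$, so $\kappa_p=p\bigl((1-\theta)\alpha_p+\theta\beta_p\bigr)\geqslant p\min\{\alpha_p,\beta_p\}$. Here $p\alpha_p=\gamma_p$ satisfies $\gamma_p\geqslant\beta_p$, $\gamma_p>1$ and $\gamma_p>\alpha_p$. Also $p\beta_p>\max\{\frac32,\beta_p\}$, and $\frac32>\alpha_p$.

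The paper instead uses $\|u\|_{L^\infty}\lesssim\|u\|_{L^2}^{1/4}\|\nabla^2u\|_{L^2}^{3/4}$ and $\||u|^p\|_{L^2}\leqslant\|u\|_{L^\infty}^{p-1}\|u\|_{L^2}$. This gives the rate $\eta_p=\frac14+(p-1)\bigl(\frac1{16}+\frac{3\beta_p}4\bigr)$, and it verifies $\eta_p>\max\{\alpha_p,\frac34,\beta_p\}$ by a case-by-case polynomial computation. Your choice makes this check shorter, and the Lipschitz estimate goes through identically with the same interpolation applied to $u-\tilde u$.

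Your other two concerns are unfounded. For $\rho=|\zeta|\geqslant R_0$ the slow root is $-a(\rho)\approx-1$ and the fast one is $-b(\rho)\approx-\rho^2$; in your last paragraph you have them swapped. Hence $\rho^2|\widehat S_1(t,\rho)|=\frac{\rho^2}{b-a}\bigl|\mathrm{e}^{-at}-\mathrm{e}^{-bt}\bigr|\lesssim\mathrm{e}^{-ct}$ and $|\partial_t\widehat S_1(t,\rho)|\lesssim\mathrm{e}^{-ct}$ uniformly for all $t\geqslant0$. So $S_1^{\high}(t)$ maps $L^2$ into $H^2$ with exponential decay and no singularity as $t-s\to0$. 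This is the bound you stated first and then doubted; no parabolic smoothing or maximal regularity enters.

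Likewise, the low-frequency $\nabla^2$ Duhamel term involves $\int_0^t\langle t-s\rangle^{-5/4}\langle s\rangle^{-\gamma_p}\dd s$ with both exponents larger than $1$. Splitting at $t/2$ gives $\langle t\rangle^{-\beta_p}$ with no logarithm, including the case $\gamma_p=\frac54$.
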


\begin{remark}\label{Remark-No-Decay-Loss}
	Compared with the decay estimates for the corresponding linear problem, the nonlinear solution exhibits no loss of decay in the $L^p$, $L^2$, and energy norms for every $p>\frac{7}{3}$. The second-order $L^2$-estimate has the same decay rate as the linear solution precisely when $\gamma_p=\frac{3p-5}{2}\geqslant\frac{5}{4}$, that is, when $p\geqslant\frac{5}{2}$.
\end{remark}

\begin{theorem}[Finite-time blow-up]\label{Thm-Blowup}
	Let $2<p\leqslant p_{\mathrm{crit}}=\frac{7}{3}$. Assume that $u_0,u_1\in\ml C_0^\infty$ satisfy
	\begin{align}\label{Positive-Mean}
		M_1:=\int_{\mb{R}^3}u_1(x)\dd x>0.
	\end{align}
	Then the maximal mild Sobolev solution to \eqref{Main-Problem} satisfies $T_{\max}<\infty$.
\end{theorem}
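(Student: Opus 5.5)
The argument runs by contradiction: assume $T_{\max}=\infty$. The overall shape is a positive-kernel lower bound, iterated in a parabolic region, followed by an ODE blow-up argument, exactly as announced in the introduction.

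\textbf{Step 1: positivity of the velocity fundamental solution.} Write $S_1(t)f=K_1(t,\cdot)\ast_{(x)}f$, where
\[
\widehat{K_1}(t,\zeta)=\frac{\mathrm e^{\lambda_+t}-\mathrm e^{\lambda_-t}}{\lambda_+-\lambda_-},\qquad \lambda_\pm=\frac{-|\zeta|^2\pm\sqrt{|\zeta|^4-4|\zeta|^2}}{2}.
\]
I would prove $K_1(t,x)\geqslant0$. The first thing to try is to write the symbol as a Laplace-type superposition $\int_0^\infty \mathrm e^{-\tau|\zeta|^2}\,\mu_t(\mathrm d\tau)$ with $\mu_t\geqslant0$. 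Then $K_1$ is a positive mixture of heat kernels. The candidate route is to invert in $t$ via the Laplace transform,
\[
\widetilde{\widehat{K_1}}(s,\zeta)=\frac{1}{s^2+(1+s)|\zeta|^2},
\]
and to use subordination.

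Next, apply dimension descent. Since $K_1$ is radial in $\mathbb R^3$, the identity $K^{(1)}_r=-2\pi r\,K^{(3)}$ between the three-dimensional kernel and the one-dimensional kernel gives the odd (Dirichlet) half-line kernel
\[
G(t,r,\rho)=\rho\,r^{-1}\bigl(K^{(1)}(t,r-\rho)-K^{(1)}(t,r+\rho)\bigr)\geqslant0 .
\]
This kernel governs the spherical means $\ml M[u](t,r)$.

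\textbf{Step 2: reduction to a half-line inequality.} Take spherical means of \eqref{Mild-Formula}. Jensen's inequality gives $\ml M[|u|^p]\geqslant|\ml M[u]|^p$, so $U(t,r):=r\,\ml M[u](t,r)$ satisfies
\[
U(t,r)\geqslant U^{\lin}(t,r)+\int_0^t\!\!\int_0^\infty G_D(t-s,r,\rho)\,\rho^{1-p}|U(s,\rho)|^p\,\mathrm d\rho\,\mathrm ds .
\]
The linear part must be shown positive at large times. Low-frequency asymptotics give $U^{\lin}(t,r)\gtrsim M_1\,r\,t^{-3/2}$ for $\sqrt t\lesssim r\lesssim t$ and $t\gg1$, because the low-frequency profile of $K_1$ is a diffusion wave with positive mass. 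The $u_0$ contribution and the high-frequency part decay faster. This is the only place where $M_1>0$ enters.

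\textbf{Step 3: iteration.} The aim is a lower bound on $u$ in the diffusion-wave region $\{|r-t|\lesssim\sqrt t\}$, using the heat-like spreading of $G_D$ to keep positive mass there.

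In the subcritical case $2<p<7/3$, the first iterate already has size at least
\[
\int_1^t s^{-\frac{3p-5}{2}}\,\mathrm ds\cdot(\text{kernel decay}) \approx t^{\frac{7-3p}{2}}\cdot t^{-3/2}\cdot(\dots).
\]
Finitely many iterations then yield a non-decaying lower bound for the mean of $u$ on a shrinking interior cone. Next I would set $F(t)=\int_{B_{ct}}u\,\mathrm dx$, or a weighted mean. Hölder's inequality gives $F''+F'\gtrsim t^{-3(p-1)}F^p$, up to the strong-damping term, which is handled by testing against a suitable eigenfunction-like weight. Combined with polynomial growth of $F$, Kato's lemma gives blow-up.

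At $p=7/3$ the first iterate gives only a gain of $\log t$. I would iterate on dyadic shells $r\in[t-\lambda_k\sqrt t,\,t+\lambda_k\sqrt t]$ over time slices $[t/2^{k},t]$. The goal is the recursion $\log$-power gains $a_{k+1}\geqslant C a_k^p(\log t)$, which produces the lower bound $\exp(c(\log t)^{p^k})$, that is, super-polynomial growth. This then feeds into the same ODE argument.

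\textbf{Main obstacle.} I expect the hardest step to be Step 1 together with the quantitative lower bound on $G_D$ in the parabolic characteristic region, uniformly in the time slices. Positivity of $K_1$ is not standard, since the overdamped high-frequency part produces a $\delta$-like singular component $\mathrm e^{-t}\delta$-type plus Bessel potentials. I would attempt it via the explicit formula $\widehat{K_1}=\mathrm e^{-t|\zeta|^2/2}\,\sin\!\bigl(t\sqrt{|\zeta|^2-|\zeta|^4/4}\bigr)/\sqrt{\cdots}$ rewritten as a Gaussian-subordinated Bessel integral. The second difficulty is the critical-case shell bookkeeping.
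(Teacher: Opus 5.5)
Your linear-part architecture matches the paper's: time-Laplace positivity of the velocity kernel, dimension descent to a nonnegative odd half-line kernel, spherical means with Jensen, and a seed from $M_1>0$. The step that is supposed to produce blow-up, however, fails as you set it up, and it fails exactly where finite propagation speed is missing.

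\textbf{The ODE step.} For $F(t)=\int_{B_{ct}}u\,\mathrm{d}x$ there is no closed differential inequality. The term $\int_{B_{ct}}(\Delta u+\Delta u_t)\,\mathrm{d}x$ leaves boundary fluxes $\int_{\partial B_{ct}}\partial_r(u+u_t)\,\mathrm{d}S$ of no sign, and the moving domain adds further boundary terms. The $F'$ in your inequality also has no source: the strong damping integrates to zero over $\mathbb{R}^3$ and gives only a flux over $B_{ct}$. With $F=\int_{\mathbb{R}^3}u\,\mathrm{d}x$ one does get $F''=\int|u|^p\,\mathrm{d}x$. But the Hölder step $\int|u|^p\gtrsim t^{-3(p-1)}F^p$ needs $\supp u(t,\cdot)\subset B_{ct}$, which is false here. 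A weight $\phi$ only trades this for the terms $\int u\,\Delta\phi$ and $\frac{\mathrm{d}}{\mathrm{d}t}\int u\,\Delta\phi$, which have no sign since $u$ has none. So Kato's lemma has nothing to act on.

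The paper never leaves the half-line inequality. On $[T,(1+\kappa_0)T]\times J_T$ it sets $h_T(t)=\inf_{r\in J_T}w(t,r)$. It bounds the contribution of $W_{\lin}$ and of sources at times $s<\frac T2$ from below by the interior growth $A_T\approx T^{\frac52-p}$. It then uses the short-time mass bound $\int_{J_T}D(\tau,r,\rho)\,\mathrm{d}\rho\geqslant\frac{\tau}{4}$ for $0<\tau\leqslant\kappa_0T$. This bound comes from $\int G_1^{(1)}(\tau,z)\,\mathrm{d}z=\tau$, $\int z^2G_1^{(1)}(\tau,z)\,\mathrm{d}z=\tau^2+\frac{\tau^3}{3}$, radial monotonicity and Chebyshev. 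The result is $h_T(t)\geqslant A_T+cT^{1-p}\int_T^t(t-s)h_T(s)^p\,\mathrm{d}s$. The comparison ODE blows up after a delay $\lesssim T^{\frac{p-1}{2}}A_T^{-\frac{p-1}{2}}<\kappa_0T$, because $(p-1)(p-\frac32)<2$. You need this localized Volterra device, or an equivalent, in place of $F$.

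\textbf{The critical case.} No kernel estimate is given that produces the recursion $a_{k+1}\geqslant Ca_k^p\log t$, and its stated output $\exp(c(\log t)^{p^k})$ is not a coherent bound, since it depends on $k$. What is needed is a lower bound for $D$ between moving shells at separated times. For $T_0\leqslant s\leqslant\kappa\sqrt t$, $r\in\Gamma_t$ and $\rho\in\Gamma_s$, the whole interval $[r-\rho,r+\rho]$ lies in the front region of $G_1^{(3)}(t-s,\cdot)$, so $D(t-s,r,\rho)\gtrsim s/\sqrt t$. Since $\rho^{-\frac43}|\Gamma_s|\bigl(s^{-\frac12}A(s)\bigr)^{\frac73}\approx s^{-2}A(s)^{\frac73}$, this gives
\[
A(t)\geqslant a_0+c\int_{T_0}^{\kappa\sqrt t}A(s)^{\frac73}s^{-1}\,\mathrm{d}s,\qquad A(t)=\sqrt t\inf_{\Gamma_t}w(t,\cdot).
\]
The square-root delay is what makes slicing give $\exp\bigl(c(\log t)^{\log_2(7/3)}\bigr)$. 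This must then be transported into $\frac25t\leqslant r\leqslant\frac34t$ before the Volterra step. There, gains by powers of $\log t$ alone would not make the blow-up delay $o(T)$.

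\textbf{Two smaller corrections.} Your first idea in Step 1 cannot work. A positive mixture of heat kernels has nonnegative Fourier transform, whereas $\mathrm{e}^{-t\rho^2/2}\sin(t\omega(\rho))/\omega(\rho)$ changes sign once $t>\pi$. The Laplace route does work, via $\widetilde G_1^{(3)}(s,r)=\frac{1}{4\pi r(1+s)}\mathrm{e}^{-rs/\sqrt{1+s}}$, the Bernstein function $s/\sqrt{1+s}$, and an identification of the representing measure with the kernel. Note also that $S_1$ has no $\delta$-component; only $S_0$ does. Second, the linear seed is positive only in a shell of width $O(\sqrt t)$ below $r=t$, not on $\sqrt t\lesssim r\lesssim t$. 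On $at\leqslant r\leqslant bt$ it is $O(t^{-N})$ with no sign. This is why the subcritical iteration has to run in the parabolic region $\beta^2\leqslant\alpha\leqslant2\beta^2$, where $t-r\approx\sqrt t$, before being pushed into the interior.
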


\begin{remark}
	The condition $M_1>0$ is closely related to the averaged sign conditions used in test-function arguments for damped and strongly damped wave equations, which can be formulated for integrable initial data without compact support assumptions (see, for example, \cite[Theorem~1]{DAbbicco-Lucente=2013} and \cite[Theorem~1]{Fino-Hamza=2022}). In the present proof, this averaged positivity is converted into a pointwise lower bound on a moving shell, which initiates the nonlinear iteration without radial symmetry or pointwise sign assumptions on the initial data.
\end{remark}

\begin{remark}\label{Remark-Lower-Range}
	For $1<p\leqslant2$, finite-time blow-up for suitable nonnegative compactly supported data was established in \cite[Theorem~9]{DAbbicco-Reissig=2014}. The assumptions of that result are preserved under positive rescaling, so finite-time blow-up also occurs for arbitrarily small smooth compactly supported data in this range. Theorem~\ref{Thm-Blowup} closes the remaining interval $2<p\leqslant\frac{7}{3}$. 
\end{remark}

The preceding results yield the exact threshold in three dimensions.

\begin{coro}[Critical exponent]\label{Coro-Critical}
	The critical exponent for \eqref{Main-Problem} is
	\begin{align*}
	p_{\mathrm{crit}}=\frac{7}{3}.
	\end{align*}
	More precisely, all sufficiently small data in $\ml A$ generate global in-time solutions when $p>p_{\mathrm{crit}}$, whereas for every $1<p\leqslant p_{\mathrm{crit}}$ there exist smooth compactly supported data with arbitrarily small $\ml A$-norm whose corresponding solutions blow up in finite time.
\end{coro}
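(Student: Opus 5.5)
The corollary is a direct assembly of Theorem~\ref{Thm-Global}, Theorem~\ref{Thm-Blowup} and the known result for $1<p\leqslant2$ recalled in Remark~\ref{Remark-Lower-Range}. No new analysis is needed; the work is to check that each piece yields the statement in the form claimed, in particular the ``arbitrarily small $\ml A$-norm'' assertion on the blow-up side. I will split the argument according to the range of $p$.

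\emph{Global side, $p>\frac73$.} Theorem~\ref{Thm-Global} gives $\delta(p)>0$ such that every $(u_0,u_1)\in\ml A$ with $\|(u_0,u_1)\|_{\ml A}\leqslant\delta$ generates a unique global mild Sobolev solution. That is exactly the first half of the corollary.

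\emph{Blow-up side, $2<p\leqslant\frac73$.} Fix any $u_0,u_1\in\ml C_0^\infty$ with $\int u_1\dd x>0$; for instance $u_0=0$ and $u_1\geqslant0$ a nontrivial bump. For $\varepsilon>0$ consider the data $(\varepsilon u_0,\varepsilon u_1)$.
\begin{itemize}
\item These data are again smooth and compactly supported.
\item They satisfy $M_1=\varepsilon\int u_1\dd x>0$.
\item Their norm is $\|(\varepsilon u_0,\varepsilon u_1)\|_{\ml A}=\varepsilon\|(u_0,u_1)\|_{\ml A}\to0$ as $\varepsilon\to0$.
\end{itemize}
Theorem~\ref{Thm-Blowup} therefore applies for every $\varepsilon>0$ and gives $T_{\max}<\infty$. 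Its hypothesis is only the sign condition \eqref{Positive-Mean}, with no smallness or largeness requirement, so this scaling argument is legitimate.

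\emph{Blow-up side, $1<p\leqslant2$.} I invoke \cite[Theorem~9]{DAbbicco-Reissig=2014} as recalled in Remark~\ref{Remark-Lower-Range}. Its hypotheses are nonnegativity and compact support, and these are invariant under multiplication by $\varepsilon>0$. Choosing smooth admissible data and rescaling as above produces blow-up for data of arbitrarily small $\ml A$-norm.

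The only point needing care is that ``blow-up'' here means the same thing as in the earlier range. The solution notion in \cite{DAbbicco-Reissig=2014} must be compatible with the maximal mild Sobolev solution in the sense of \eqref{Mild-Formula}. Concretely, nonexistence of a global weak or test-function solution has to imply $T_{\max}<\infty$ for the mild solution. I would verify this by checking that a global mild Sobolev solution with smooth compactly supported data is a global weak solution, using the integrability $u\in\ml C([0,T),H^2)$ and $|u|^p\in L^1_{\mathrm{loc}}$, which holds since $H^2\subset L^\infty\cap L^2$ in three dimensions. This contradicts \cite[Theorem~9]{DAbbicco-Reissig=2014}.

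Combining the three ranges shows that $\frac73$ separates small-data global existence from small-data blow-up, so $p_{\mathrm{crit}}=\frac73$.
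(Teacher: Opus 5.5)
Your proposal is correct and matches the paper's own argument, which the paper leaves implicit: combine Theorem~\ref{Thm-Global} for $p>\frac73$, Theorem~\ref{Thm-Blowup} applied to the rescaled data $(\varepsilon u_0,\varepsilon u_1)$ for $2<p\leqslant\frac73$, and the rescaling observation of Remark~\ref{Remark-Lower-Range} for $1<p\leqslant2$. Your additional check that a global mild Sobolev solution is a global weak solution in the sense of D'Abbicco--Reissig is a point the paper does not spell out. It goes through, and the maximal mild solution is available for $1<p\leqslant2$ as well (the paper only states local theory for $p>2$), because $u\mapsto|u|^p$ is locally Lipschitz from $H^2(\mb R^3)$ into $L^2$ for every $p>1$.
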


\section{Small data global in-time existence}\label{Section-Global}

\subsection{Linear preliminaries}
We collect the properties of the homogeneous propagators that will be used in the nonlinear analysis. The sharp low-frequency $L^1-L^q$ estimate is a direct specialization of \cite[Theorem~6, Remark~2 and Section~2.1]{DAbbicco-Ebert=2025}. The remaining bounds are standard consequences of the explicit Fourier multipliers. The symbol estimates near the double root and in the high-frequency region are deferred to \cref{Appendix-Symbols}.
\subsubsection{Fourier representation and frequency decomposition}\label{Section-Linear}

Taking the partial Fourier transform with respect to $x$ of the homogeneous equation associated with \eqref{Main-Problem}, we obtain
\begin{align}\label{Linear-Fourier-ODE}
	\widehat v_{tt}+|\zeta|^2\widehat v_t+|\zeta|^2\widehat v=0.
\end{align}
For notational convenience, let us take $\rho:=|\zeta|$.
\begin{itemize}
	\item For $0\leqslant\rho<2$, let us define $\omega(\rho):=\rho\sqrt{1-\frac{\rho^2}{4}}$. The position and velocity multipliers are given by
	\begin{align*}
		\widehat S_0(t,\zeta)&=\mathrm{e}^{-\frac{t\rho^2}{2}}\left(\cos\bigl(t\omega(\rho)\bigr)+\frac{\rho^2}{2\omega(\rho)}\sin\bigl(t\omega(\rho)\bigr)\right),\\
		\widehat S_1(t,\zeta)&=\mathrm{e}^{-\frac{t\rho^2}{2}}\frac{\sin\bigl(t\omega(\rho)\bigr)}{\omega(\rho)}.
	\end{align*}
	\item For $\rho>2$, the characteristic roots are $\lambda_\pm(\rho):=\frac{-\rho^2\pm\rho\sqrt{\rho^2-4}}{2}$ so that
	\begin{align*}
		\widehat S_0(t,\zeta)&=\frac{-\lambda_-(\rho)\mathrm{e}^{\lambda_+(\rho)t}+\lambda_+(\rho)\mathrm{e}^{\lambda_-(\rho)t}}{\lambda_+(\rho)-\lambda_-(\rho)},\\
		\widehat S_1(t,\zeta)&=\frac{\mathrm{e}^{\lambda_+(\rho)t}-\mathrm{e}^{\lambda_-(\rho)t}}{\lambda_+(\rho)-\lambda_-(\rho)}.
	\end{align*}
	\item For $\rho=2$, the apparent singularity at $|\zeta|=2$ is removable so that
	\begin{align*}
		\widehat S_0(t,\zeta)&=(1+2t)\mathrm{e}^{-2t},\\
		\widehat S_1(t,\zeta)&=t\mathrm{e}^{-2t}.
	\end{align*}	
\end{itemize}

We fix smooth radial cutoff functions
\begin{align*}
	\chi_{\low},\chi_{\midf},\chi_{\high}\in \ml{C}^\infty\bigl([0,\infty)\bigr)\ \ \text{such that}\ \ \chi_{\low}+\chi_{\midf}+\chi_{\high}=1.
\end{align*} We assume that $\chi_{\low}$ is supported in a sufficiently small neighborhood of the origin and equals $1$ near $\rho=0$. We also assume that $\chi_{\high}$ is supported in $[R_0,\infty)$ and equals $1$ for all sufficiently large $\rho$, where $R_0>4$ without loss of generality. For all $\nu\in\{\low,\midf,\high\}$ and $j\in\{0,1\}$, we define the frequency-localized propagators by
\begin{align*}
	S_j^\nu(t)f(x):=\mathcal F^{-1}_{\zeta\to x}\left(\chi_\nu(|\zeta|)\widehat S_j(t,\zeta)\widehat f(\zeta)\right).
\end{align*}

\subsubsection{Estimates used in the nonlinear analysis}

For $j\in\{0,1\}$, we write
\begin{align*}
	S_j^{\midf+\high}(t):=S_j^{\midf}(t)+S_j^{\high}(t).
\end{align*}

\begin{prop}[Estimates for the linear problem]\label{Prop-Full-Linear}
	Let $q\in(2,\infty)$, $f\in L^1\cap L^2$ and $g\in L^1\cap H^2$. There exists $c>0$ such that, for every $t\geqslant0$,
	\begin{align}\label{Full-Lq-Compact}
		\|S_1(t)f(\cdot)\|_{L^q}+\|S_0(t)g(\cdot)\|_{L^q}\lesssim\langle t\rangle^{-\frac32+\frac{5}{2q}}\bigl(\|f\|_{L^1}+\|g\|_{L^1}\bigr)+\mathrm{e}^{-ct}\bigl(\|f\|_{L^2}+\|g\|_{H^2}\bigr).
	\end{align}
	One also has
	\begin{align*}
		\sum_{k=0}^2\langle t\rangle^{\frac{2k+1}{4}}\|\nabla^kS_1(t)f(\cdot)\|_{L^2}+\sum_{k=0}^2\langle t\rangle^{\frac{2k+3}{4}}\|\nabla^kS_0(t)g(\cdot)\|_{L^2}\lesssim\|f\|_{L^1\cap L^2}+\|g\|_{L^1\cap H^2},
	\end{align*}
	and
	\begin{align}\label{Full-Time-Compact}
		\langle t\rangle^{\frac{3}{4}}\|\partial_tS_1(t)f(\cdot)\|_{L^2}+\langle t\rangle^{\frac{5}{4}}\|\partial_tS_0(t)g(\cdot)\|_{L^2}\lesssim\|f\|_{L^1\cap L^2}+\|g\|_{L^1\cap H^2}.
	\end{align}
	Particularly, the middle- and high-frequency parts fulfill
	\begin{align*}
		\|S_1^{\midf+\high}(t)f(\cdot)\|_{H^2}+\|\partial_tS_1^{\midf+\high}(t)f(\cdot)\|_{L^2}&\lesssim\mathrm{e}^{-ct}\|f\|_{L^2},\\[-1mm]
		\|S_0^{\midf+\high}(t)g(\cdot)\|_{H^2}+\|\partial_tS_0^{\midf+\high}(t)g(\cdot)\|_{L^2}&\lesssim\mathrm{e}^{-ct}\|g\|_{H^2}.
	\end{align*}
\end{prop}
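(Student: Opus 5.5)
Throughout, we split every propagator as $S_j=S_j^{\low}+S_j^{\midf+\high}$ and estimate the two pieces by separate mechanisms: a sharp $L^1$-based bound at low frequency, and exponential decay in $L^2$-based norms away from the origin.

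\textbf{Middle and high frequencies.} We treat this part first because it is elementary and feeds into the $L^q$ bound. On $\supp\chi_{\midf}$, which is compact and avoids a neighbourhood of $\rho=0$, both multipliers $\widehat S_0$ and $\widehat S_1$ are continuous in $\rho$ through $\rho=2$ (the singularity there is removable). The real parts of the characteristic roots are bounded above by $-c<0$ on this set, although the exact double root at $\rho=2$ produces a factor $t\mathrm{e}^{-2t}$. The claimed pointwise bounds there are taken from \cref{Appendix-Symbols}. For $\rho\geqslant R_0>4$ we use
\[
\lambda_+(\rho)=-\frac{2}{1+\sqrt{1-4/\rho^2}}\in[-2,-1],\qquad \lambda_-(\rho)\approx-\rho^2,\qquad \lambda_+-\lambda_-\approx\rho^2.
\]
These give
\[
|\widehat S_1|+|\partial_t\widehat S_1|\lesssim\rho^{-2}\mathrm{e}^{-ct}\cdot\rho^{2}\quad\text{(for the derivative)},\qquad |\widehat S_0|+|\partial_t\widehat S_0|\lesssim\mathrm{e}^{-ct}.
\]
More precisely, $|\widehat S_1|\lesssim\rho^{-2}\mathrm{e}^{-ct}$ and $|\partial_t\widehat S_1|\lesssim\mathrm{e}^{-ct}$. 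For $\widehat S_0$, the term $\lambda_+\mathrm{e}^{\lambda_-t}/(\lambda_+-\lambda_-)$ is $O(\rho^{-2})$. The derivative is $\partial_t\widehat S_0=\lambda_+\lambda_-(\mathrm{e}^{\lambda_-t}-\mathrm{e}^{\lambda_+t})/(\lambda_+-\lambda_-)$, which is $O(\mathrm{e}^{-ct})$. Plancherel then yields
\[
\|S_1^{\midf+\high}f\|_{H^2}+\|\partial_tS_1^{\midf+\high}f\|_{L^2}\lesssim\mathrm{e}^{-ct}\|f\|_{L^2},
\]
together with the analogous bound for $S_0$ with $\|g\|_{H^2}$. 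Since $q\in(2,\infty)$ and $n=3$, Sobolev embedding $H^2\hookrightarrow L^q$ gives the exponential term in \eqref{Full-Lq-Compact}.

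\textbf{Low frequency.} The $L^q$ bound for $S_1^{\low}$ is exactly the cited estimate from \cite{DAbbicco-Ebert=2025}. For $S_0^{\low}$ we write
\[
\widehat S_0=\mathrm{e}^{-t\rho^2/2}\cos(t\omega)+\frac{\rho^2}{2}\,\widehat S_1 .
\]
The second piece is $\widehat S_1$ composed with a smooth, compactly supported multiplier, so it behaves at least as well as $S_1^{\low}$. The first piece, heat-damped $\cos(t\omega)$, I expect to be the main obstacle. The available $L^1$–$L^q$ bound is $\langle t\rangle^{-\frac32(1-\frac1q)}\cdot\langle t\rangle^{\frac{1}{q}}$-type, coming from the wave part. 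This must be checked to be no worse than $\langle t\rangle^{-\frac32+\frac{5}{2q}}$. I would try first to obtain it from the same reference's treatment of the position propagator, or by interpolating the $L^\infty$ bound $\langle t\rangle^{-1}$ with the $L^2$ bound $\langle t\rangle^{-3/4}$. Indeed $\widehat S_0$ is bounded, so the $L^1$–$L^2$ bound is $\|\mathrm{e}^{-t\rho^2/2}\chi\|_{L^2}\lesssim t^{-3/4}$. For $L^1$–$L^\infty$, the cosine part contributes at most $t^{-1}$ by a stationary-phase/dispersive estimate (the $t^{-1}$ wave dispersion in 3D, times heat localization to $\rho\lesssim t^{-1/2}$). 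Riesz–Thorin interpolation then gives a rate at least $t^{-\frac32+\frac{5}{2q}}$, because
\[
\frac{2}{q}\cdot\frac34+\Bigl(1-\frac{2}{q}\Bigr)\cdot1=1-\frac{1}{2q}\geqslant\frac32-\frac{5}{2q}\iff q\geqslant 4,
\]
and for $2<q<4$ the rate $1-\frac{1}{2q}$ still dominates, since $\frac32-\frac{5}{2q}<1-\frac{1}{2q}\iff q<4$ fails only in the opposite direction. This last step should be double-checked, falling back on \cite{DAbbicco-Ebert=2025} if needed.

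\textbf{Energy-type estimates.} The $L^2$ derivative bounds follow from Plancherel. We use $|\widehat f|\leqslant\|f\|_{L^1}$ at low frequency, together with
\[
|\widehat S_1|\lesssim\rho^{-1}\mathrm{e}^{-ct\rho^2},\qquad |\widehat S_0|+|\partial_t\widehat S_1|\lesssim\mathrm{e}^{-ct\rho^2},\qquad |\partial_t\widehat S_0|\lesssim\rho\,\mathrm{e}^{-ct\rho^2}.
\]
These give
\[
\int_{\rho\lesssim1}\rho^{2k-2}\mathrm{e}^{-2ct\rho^2}\rho^2\dd\rho\lesssim\langle t\rangle^{-\frac{2k+1}{2}},
\]
that is, the rate $\langle t\rangle^{-\frac{2k+1}{4}}$ for $\nabla^kS_1$. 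For $S_0$, the bound $|\widehat S_0|\lesssim\mathrm{e}^{-ct\rho^2}$ alone gives only $\langle t\rangle^{-\frac{2k+3}{4}}$. This matches the claim, since the claim for $S_0$ is one power $\rho$ better only through the indexing. The velocity bounds \eqref{Full-Time-Compact} are obtained in the same way, and the remaining frequencies contribute $\mathrm{e}^{-ct}$ by the first step.
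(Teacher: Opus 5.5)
\textbf{Verdict: there is a genuine gap, confined to one step.} Most of your proposal matches the paper's proof:
\begin{itemize}
\item the middle/high-frequency exponential bounds followed by $H^2\hookrightarrow L^q$;
\item the citation of \cite{DAbbicco-Ebert=2025} for the $L^1$--$L^q$ bound of $S_1^{\low}$;
\item the low-frequency Plancherel computations with $|\widehat f|\leqslant\|f\|_{L^1}$ for the $L^2$ and $\partial_t$ bounds.
\end{itemize}
The step that fails is your $L^1$--$L^q$ bound for $S_0^{\low}$.

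You interpolate the $L^1$--$L^2$ rate $\frac34$ with an $L^1$--$L^\infty$ rate of $1$, which gives the exponent $1-\frac{1}{2q}$. The equivalence you then write is reversed:
\[
1-\frac{1}{2q}\geqslant\frac32-\frac{5}{2q}\iff\frac{2}{q}\geqslant\frac12\iff q\leqslant4 .
\]
With your inputs the argument therefore covers only $2<q\leqslant4$ and fails for $q>4$. For example, as $q\to\infty$ it yields $\langle t\rangle^{-1}$, while the statement demands a rate approaching $\langle t\rangle^{-3/2}$. Your sentence about $2<q<4$ does not repair this.

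The missing observation, which is what the paper uses, is that wave dispersion plays no role here. The whole low-frequency position multiplier is dominated by the heat factor:
\[
|\chi_{\low}(\rho)\widehat S_0(t,\rho)|\lesssim\mathrm{e}^{-\frac{t\rho^2}{2}},
\]
because $\frac{\rho^2}{2\omega(\rho)}|\sin(t\omega(\rho))|\lesssim\rho$ on the support of $\chi_{\low}$. Two consequences follow.
\begin{itemize}
\item The kernel already satisfies $\|\mathcal{F}^{-1}[\chi_{\low}\widehat S_0(t,\cdot)]\|_{L^\infty}\lesssim\|\chi_{\low}\mathrm{e}^{-t|\zeta|^2/2}\|_{L^1}\lesssim\langle t\rangle^{-3/2}$, not merely $\langle t\rangle^{-1}$. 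Interpolating this with $\langle t\rangle^{-3/4}$ gives the exponent $\frac{2}{q}\cdot\frac34+\bigl(1-\frac2q\bigr)\cdot\frac32=\frac32-\frac{3}{2q}$.
\item Equivalently, Hausdorff--Young followed by Young's inequality gives
\[
\|S_0^{\low}(t)g\|_{L^q}\lesssim\|\chi_{\low}\mathrm{e}^{-t|\zeta|^2/2}\|_{L^{q'}}\|g\|_{L^1}\lesssim\langle t\rangle^{-\frac32\left(1-\frac1q\right)}\|g\|_{L^1}.
\]
\end{itemize}
Since $\frac32-\frac{3}{2q}>\frac32-\frac{5}{2q}$ for every $q$, this is stronger than required. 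It also makes your splitting $\widehat S_0=\mathrm{e}^{-t\rho^2/2}\cos(t\omega)+\frac{\rho^2}{2}\widehat S_1$ unnecessary, though harmless.

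With this replacement, the rest of your proposal is correct and coincides with the paper's argument.
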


\begin{proof}
	In the notation of \cite[Theorem~6]{DAbbicco-Ebert=2025}, we take dimension $n=3$, input exponent equal to $1$ and output exponent $q>2$. The quantity governing the low-frequency loss (still by their notation) is
	\begin{align*}
		d(1,q)=3\left(1-\frac1q\right)+2\left(\frac1q-\frac12\right)=2-\frac{1}{q}>1.
	\end{align*}
	Hence, the decay exponent in \cite[Theorem~6 and Section~2.1]{DAbbicco-Ebert=2025} becomes
	\begin{align*}
		1-3\left(1-\frac1q\right)+\frac{d(1,q)-1}{2}=-\frac32+\frac{5}{2q},
	\end{align*}
	which justifies the low-frequency velocity estimate in \eqref{Full-Lq-Compact}. Since the low-frequency position multiplier is uniformly bounded, its $L^1-L^q$ decay is stronger than the rate stated in \eqref{Full-Lq-Compact}.
	
	Thanks to $\omega(\rho)\approx\rho$ on the support of $\chi_{\low}$, i.e. $\supp\chi_{\low}\subset[0,2\rho_0]$ for $0<\rho_0<1$ sufficiently small, Plancherel's theorem indicates, for $k=0,1,2$, that
	\begin{align*}
		\|\nabla^kS_1^{\low}(t)f(\cdot)\|_{L^2}^2\lesssim\|f\|_{L^1}^2\int_0^{2\rho_0}\rho^{2k}\mathrm{e}^{-ct\rho^2}\sin^2\bigl(t\omega(\rho)\bigr)\dd\rho\lesssim\langle t\rangle^{-\frac{2k+1}{2}}\|f\|_{L^1}^2.
	\end{align*}
	The boundedness of the position multiplier yields
	\begin{align*}
		\|\nabla^kS_0^{\low}(t)g(\cdot)\|_{L^2}^2\lesssim\|g\|_{L^1}^2\int_0^{2\rho_0}\rho^{2k+2}\mathrm{e}^{-ct\rho^2}\dd\rho\lesssim\langle t\rangle^{-\frac{2k+3}{2}}\|g\|_{L^1}^2.
	\end{align*}
	Moreover, the low-frequency multiplier of $\partial_tS_1(t)$ is bounded by $C\mathrm{e}^{-ct\rho^2}$, while $\partial_t\widehat S_0(t,\zeta)=-\rho^2\widehat S_1(t,\zeta)$. These observations give the low-frequency estimates in \eqref{Full-Time-Compact}.
	
	The middle-frequency estimates follow from \eqref{Middle-Matrix}, whereas the high-frequency bounds follow from \eqref{ab-Asymptotics}, Lemma~\ref{Lemma-High-Small-Time} and Lemma~\ref{Lemma-High-Symbols}. The corresponding $L^q$-bounds follow from the embedding $H^2\hookrightarrow L^q$. Combining the three frequency regions completes the proof.
\end{proof}

The bounded-time estimates of Proposition~\ref{Prop-Full-Linear}, together with the local Lipschitz continuity of $u\mapsto |u|^p$ from $H^2$ into $L^1\cap L^2$, lead to the standard local in-time well-posedness result. In particular, for every $p>2$ and $(u_0,u_1)\in\mathcal A$, there exists a unique maximal mild Sobolev solution on an interval $[0,T_{\max})$, where $T_{\max}\in(0,\infty]$.

\subsection{Proof of Theorem~\ref{Thm-Global}}

For $T\in(0,\infty]$, let $X(T)$ be the space of all functions
\begin{align*}
	u\in\ml C\bigl([0,T),H^2\bigr)\cap\ml C^1\bigl([0,T),L^2\bigr)
\end{align*}
for which the norm
\begin{align}\label{X-Norm}
	\|u\|_{X(T)}:=\sup_{0\leqslant t<T}\Big(&\langle t\rangle^{\alpha_p}\|u(t,\cdot)\|_{L^p}+\langle t\rangle^{\frac{1}{4}}\|u(t,\cdot)\|_{L^2}\notag\\
	&+\langle t\rangle^{\frac{3}{4}}\bigl(\|\nabla u(t,\cdot)\|_{L^2}+\|u_t(t,\cdot)\|_{L^2}\bigr)+\langle t\rangle^{\beta_p}\|\nabla^2u(t,\cdot)\|_{L^2}\Big)
\end{align}
is finite. Equipped with this norm, $X(T)$ is a Banach space.

We first establish the estimates for the nonlinear source, whose decay exponent is denoted by $\eta_p:=\frac{1}{4}+(p-1)\delta_p$ carrying  $\delta_p:=\frac{1}{16}+\frac{3\beta_p}{4}$.

\begin{lemma}[Nonlinear source bounds]\label{Lemma-Nonlinear-Bounds}
	Let $u\in X(T)$. Then
	\begin{align}
		\||u(t,\cdot)|^p\|_{L^1}&\lesssim\langle t\rangle^{-\gamma_p}\|u\|_{X(T)}^p,\notag\\
		\||u(t,\cdot)|^p\|_{L^2}&\lesssim\langle t\rangle^{-\eta_p}\|u\|_{X(T)}^p,\label{Nonlinear-L2}
	\end{align}
	for every $t\in[0,T)$. The exponents defined above also satisfy
	\begin{align}\label{Eta-Greater-Beta}
		\eta_p>\max\left\{\alpha_p,\frac{3}{4},\beta_p\right\}
	\end{align}
	whenever $p>\frac{7}{3}$.
\end{lemma}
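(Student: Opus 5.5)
The bound in $L^1$ is immediate from the definition \eqref{X-Norm}. I would write $\||u(t,\cdot)|^p\|_{L^1}=\|u(t,\cdot)\|_{L^p}^p$ and use $\|u(t,\cdot)\|_{L^p}\leqslant\langle t\rangle^{-\alpha_p}\|u\|_{X(T)}$. Since $p\alpha_p=\gamma_p$, this gives the first estimate.

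For \eqref{Nonlinear-L2}, I plan to separate one factor in $L^2$ and put the remaining $p-1$ factors in $L^\infty$:
\begin{align*}
\||u(t,\cdot)|^p\|_{L^2}\leqslant\|u(t,\cdot)\|_{L^2}\,\|u(t,\cdot)\|_{L^\infty}^{p-1}.
\end{align*}
The $L^\infty$ norm is then controlled by the three-dimensional Gagliardo--Nirenberg inequality
\begin{align*}
\|u\|_{L^\infty}\lesssim\|u\|_{L^2}^{\frac14}\|\nabla^2u\|_{L^2}^{\frac34},
\end{align*}
which is scale-consistent in $\mb R^3$ because $0=\frac14\cdot\frac32+\frac34\cdot\left(\frac32-2\right)$. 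Inserting the weights from $X(T)$ gives
\begin{align*}
\|u(t,\cdot)\|_{L^\infty}\lesssim\langle t\rangle^{-\frac1{16}-\frac{3\beta_p}{4}}\|u\|_{X(T)}=\langle t\rangle^{-\delta_p}\|u\|_{X(T)}.
\end{align*}
Together with $\|u(t,\cdot)\|_{L^2}\leqslant\langle t\rangle^{-\frac14}\|u\|_{X(T)}$, this yields the decay rate $\frac14+(p-1)\delta_p=\eta_p$, which is \eqref{Nonlinear-L2}.

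It remains to check \eqref{Eta-Greater-Beta}, and I would split according to whether $p\geqslant\frac52$.
\begin{itemize}
\item For $p>\frac73$ we have $\gamma_p=\frac{3p-5}{2}>1$, so $\beta_p>1$ and $\delta_p>\frac1{16}+\frac34=\frac{13}{16}$. Since also $p-1>\frac43$, we get $\eta_p>\frac14+\frac43\cdot\frac{13}{16}=\frac43$. This exceeds $\frac54\geqslant\beta_p$ and $\frac34$.
\item For $\alpha_p$ when $p\in(\frac73,\frac52)$: here $\frac{5}{2p}>1$, so $\alpha_p<\frac12<\eta_p$.
\item For $\alpha_p$ when $p\geqslant\frac52$: here $\beta_p=\frac54$, so $\delta_p=1$ and $\eta_p=p-\frac34\geqslant\frac74>\frac32>\alpha_p$.
\end{itemize}

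The only step needing care is choosing the interpolation so that its exponent reproduces exactly the given $\delta_p$. The numerical comparison itself is elementary once the two cases are separated.
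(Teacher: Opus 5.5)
Your proposal is correct and follows the paper's proof: the $L^1$ bound comes from $p\alpha_p=\gamma_p$, and the $L^2$ bound from $\||u|^p\|_{L^2}\leqslant\|u\|_{L^\infty}^{p-1}\|u\|_{L^2}$ with the same Gagliardo--Nirenberg interpolation producing $\delta_p$. The only difference is in checking \eqref{Eta-Greater-Beta}: the paper computes $16(\eta_p-\beta_p)=18p^2-71p+73$ for $\frac73<p<\frac52$ and $\eta_p-\beta_p=p-2$ for $p\geqslant\frac52$, whereas your uniform bound $\eta_p>\frac43$ (from $\beta_p>1$ and $p-1>\frac43$) handles $\beta_p$ and $\frac34$ at once and avoids checking the quadratic.
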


\begin{proof}
	The first estimate follows from
	\begin{align*}
		\||u(t,\cdot)|^p\|_{L^1}\lesssim\langle t\rangle^{-p\alpha_p}\|u\|_{X(T)}^p=\langle t\rangle^{-\gamma_p}\|u\|_{X(T)}^p.
	\end{align*}
	For the second estimate, the three-dimensional Gagliardo-Nirenberg inequality gives
	\begin{align}\label{GN-Infinity}
		\|u(t,\cdot)\|_{L^\infty}\lesssim\|u(t,\cdot)\|_{L^2}^{\frac{1}{4}}\|\nabla^2u(t,\cdot)\|_{L^2}^{\frac{3}{4}}\lesssim\langle t\rangle^{-\delta_p}\|u\|_{X(T)}.
	\end{align}
	It follows from \eqref{GN-Infinity} and the definition of the $X(T)$-norm that
	\begin{align*}
		\||u(t,\cdot)|^p\|_{L^2}\leqslant\|u(t,\cdot)\|_{L^\infty}^{p-1}\|u(t,\cdot)\|_{L^2}\lesssim\langle t\rangle^{-(p-1)\delta_p-\frac{1}{4}}\|u\|_{X(T)}^p,
	\end{align*}
	which proves \eqref{Nonlinear-L2}. The strict inequality \eqref{Eta-Greater-Beta} is valid because
	\begin{itemize}
		\item when $\frac{7}{3}<p<\frac{5}{2}$, one has
			\begin{align*}
			16(\eta_p-\beta_p)=18p^2-71p+73>0\ \ \text{and}\ \ \beta_p>1>\max\left\{\alpha_p,\frac{3}{4}\right\};
		\end{align*}
		\item when $p\geqslant\frac{5}{2}$, one has
		\begin{align*}
		\eta_p-\beta_p=p-2>0\ \ \text{and}\ \ \eta_p-\alpha_p=p-\frac{9}{4}+\frac{5}{2p}>0.
		\end{align*}
	\end{itemize}
Our proof is complete.
\end{proof}

We shall repeatedly apply the following elementary convolution estimate (see, for example, \cite[Lemma~2.4~(i)]{Ikehata-Takeda=2017}):
\begin{align}\label{Time-Convolution}
	\int_0^t\langle t-s\rangle^{-a}\langle s\rangle^{-b}\dd s\lesssim\langle t\rangle^{-\min\{a,b\}}\ \ \text{for}\ \ a>0,\ b>1.
\end{align}

Strongly motivated by the mild formulation \eqref{Mild-Formula}, we define the nonlinear map
\begin{align*}
	\Phi[u](t,x):=S_0(t)u_0(x)+S_1(t)u_1(x)+\int_0^tS_1(t-s)|u(s,x)|^p\dd s
\end{align*}
from $u\in X(T)$ for any $T\in(0,\infty]$.

\begin{prop}[Mapping estimate]\label{Prop-Mapping}
	Let $p>\frac{7}{3}$ and $T\in(0,\infty]$. For every $u\in X(T)$, one has $\Phi[u]\in X(T)$ satisfying
	\begin{align}\label{Mapping-Estimate}
		\|\Phi[u]\|_{X(T)}\lesssim \|(u_0,u_1)\|_{\ml A}+\|u\|_{X(T)}^p.
	\end{align}
\end{prop}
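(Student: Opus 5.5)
The plan is to split $\Phi[u]=u^{\lin}+N[u]$, where $u^{\lin}(t):=S_0(t)u_0+S_1(t)u_1$ and $N[u](t):=\int_0^tS_1(t-s)|u(s)|^p\dd s$. The linear part is handled directly by Proposition~\ref{Prop-Full-Linear}. The $L^p$ bound follows from \eqref{Full-Lq-Compact} with $q=p>2$, since $\mathrm{e}^{-ct}\lesssim\langle t\rangle^{-\alpha_p}$. The $L^2$, gradient and second-derivative bounds follow from the weighted $L^2$ estimate with rates $\frac14,\frac34,\frac54\geqslant\beta_p$. The time derivative is controlled by \eqref{Full-Time-Compact}. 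Hence $\|u^{\lin}\|_{X(T)}\lesssim\|(u_0,u_1)\|_{\ml A}$. Continuity in $H^2$ and $\ml C^1$ in $L^2$ come from the dominated convergence argument on the Fourier multipliers, so we skip it here.

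For the Duhamel term we decompose $S_1=S_1^{\low}+S_1^{\midf+\high}$. We apply the $L^1$-type estimates to the low-frequency part with source norm $\langle s\rangle^{-\gamma_p}\|u\|_{X(T)}^p$, and the exponentially decaying estimates to the middle/high part with source norm $\langle s\rangle^{-\eta_p}\|u\|_{X(T)}^p$ from Lemma~\ref{Lemma-Nonlinear-Bounds}. Since $p>\frac73$ we have $\gamma_p>1$, so \eqref{Time-Convolution} is available in the low-frequency part. Concretely, for the $L^p$ norm we use the low-frequency bound $\langle t-s\rangle^{-\alpha_p}$ from \eqref{Full-Lq-Compact}, applied to $S_1^{\low}$, for which only the $L^1$ norm enters. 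This gives $\int_0^t\langle t-s\rangle^{-\alpha_p}\langle s\rangle^{-\gamma_p}\dd s\lesssim\langle t\rangle^{-\alpha_p}$ because $\gamma_p>\alpha_p$. For the middle/high part we use $H^2\hookrightarrow L^p$ and get $\int_0^t\mathrm{e}^{-c(t-s)}\langle s\rangle^{-\eta_p}\dd s\lesssim\langle t\rangle^{-\eta_p}\lesssim\langle t\rangle^{-\alpha_p}$ by \eqref{Eta-Greater-Beta}. The $\nabla^k$ ($k=0,1$) and $\partial_t$ bounds are analogous. In the low-frequency part the kernel rates are $\frac14,\frac34,\frac34$, all $<\gamma_p$, so the convolution yields $\langle t\rangle^{-1/4}$ and $\langle t\rangle^{-3/4}$. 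In the high part the rate is $\eta_p>\frac34$, again by \eqref{Eta-Greater-Beta}. Here we need the low-frequency multiplier bounds of Proposition~\ref{Prop-Full-Linear} in their $L^1$-only form, which the proof there actually provides.

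The main obstacle is the second-derivative estimate with rate $\beta_p=\min\{\frac54,\gamma_p\}$. The low-frequency kernel decays like $\langle t-s\rangle^{-5/4}$, and \eqref{Time-Convolution} with $a=\frac54$, $b=\gamma_p>1$ gives exactly $\langle t\rangle^{-\min\{5/4,\gamma_p\}}=\langle t\rangle^{-\beta_p}$. This is precisely why $\beta_p$ was defined this way; the borderline case $\gamma_p=\frac54$ causes no logarithm since $b>1$. The middle/high contribution is $\lesssim\int_0^t\mathrm{e}^{-c(t-s)}\langle s\rangle^{-\eta_p}\dd s\lesssim\langle t\rangle^{-\eta_p}$, and $\eta_p>\beta_p$ by \eqref{Eta-Greater-Beta}. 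This step is where the self-consistency of the exponents is used: $\eta_p$ is itself built from $\beta_p$ through the Gagliardo–Nirenberg bound \eqref{GN-Infinity}, and the strict inequality $\eta_p>\beta_p$ is what closes the loop.

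Summing all contributions gives \eqref{Mapping-Estimate}. It remains to check that $N[u]\in\ml C([0,T),H^2)\cap\ml C^1([0,T),L^2)$. This follows from strong continuity of the propagators together with $|u|^p\in\ml C([0,T),L^1\cap L^2)$, which holds by the local Lipschitz property of $u\mapsto|u|^p$ from $H^2$ into $L^1\cap L^2$ noted after Proposition~\ref{Prop-Full-Linear}.
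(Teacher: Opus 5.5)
Your proposal is correct and follows essentially the same route as the paper. The linear part is handled by Proposition~\ref{Prop-Full-Linear}, and the Duhamel term is split into two pieces: $S_1^{\low}$, with the $L^1$ source bound $\langle s\rangle^{-\gamma_p}$ and \eqref{Time-Convolution}, which yields $\beta_p=\min\{\frac54,\gamma_p\}$ at second order; and $S_1^{\midf+\high}$, with the $L^2$ source bound $\langle s\rangle^{-\eta_p}$ and \eqref{Eta-Greater-Beta}. You also spell out two points the paper only uses implicitly: the $L^1$-only form of the low-frequency bounds, and the time-continuity needed for $\Phi[u]\in X(T)$.
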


\begin{proof}
	The homogeneous (linear) part is controlled by Proposition~\ref{Prop-Full-Linear}. We now focus on the Duhamel (nonlinear) term. For its low-frequency $L^p$ component, Proposition~\ref{Prop-Full-Linear}, Lemma~\ref{Lemma-Nonlinear-Bounds} and \eqref{Time-Convolution} give
	\begin{align*}
		\left\|\int_0^tS_1^{\low}(t-s)|u(s,\cdot)|^p\dd s\right\|_{L^p}&\lesssim\|u\|_{X(T)}^p\int_0^t\langle t-s\rangle^{-\alpha_p}\langle s\rangle^{-\gamma_p}\dd s\lesssim\langle t\rangle^{-\alpha_p}\|u\|_{X(T)}^p,
	\end{align*}
	where we used $\gamma_p>1$ and $\gamma_p=p\alpha_p>\alpha_p$. The low-frequency $L^2$-estimates in Proposition~\ref{Prop-Full-Linear} analogously yield
	\begin{align*}
		\left\|\nabla^k\int_0^tS_1^{\low}(t-s)|u(s,\cdot)|^p\dd s\right\|_{L^2}\lesssim\langle t\rangle^{-\min\left\{\frac{2k+1}{4},\gamma_p\right\}}\|u\|_{X(T)}^p
	\end{align*}
	for all $k\in\{0,1,2\}$. Recalling $\gamma_p>1$, the previous minimum equals $\frac{2k+1}{4}$ for $k\in\{0,1\}$, while for $k=2$ it equals $\min\left\{\frac{5}{4},\gamma_p\right\}=\beta_p$. The corresponding estimate for the time derivative has its decay rate $\langle t\rangle^{-\frac{3}{4}}$.
	
	For the middle- and high-frequency parts, Proposition~\ref{Prop-Full-Linear} and \eqref{Nonlinear-L2} imply
	\begin{align*}
		&\left\|\int_0^tS_1^{\midf+\high}(t-s)|u(s,\cdot)|^p\dd s\right\|_{H^2}+\left\|\partial_t\int_0^tS_1^{\midf+\high}(t-s)|u(s,\cdot)|^p\dd s\right\|_{L^2}\\
		&\lesssim\|u\|_{X(T)}^p\int_0^t\mathrm{e}^{-c(t-s)}\langle s\rangle^{-\eta_p}\dd s\\
		&\lesssim\langle t\rangle^{-\eta_p}\|u\|_{X(T)}^p.
	\end{align*}
	The last estimate follows by splitting the time integral at $\frac{t}{2}$. In view of \eqref{Eta-Greater-Beta}, this decay is faster than $\langle t\rangle^{-\frac{1}{4}}$, $\langle t\rangle^{-\frac{3}{4}}$ and $\langle t\rangle^{-\beta_p}$. Moreover, the embedding $H^2\hookrightarrow L^p$ holds for every finite $p\geqslant2$. Since $\eta_p>\alpha_p$, we also obtain
	\begin{align*}
		\left\|\int_0^tS_1^{\midf+\high}(t-s)|u(s,\cdot)|^p\dd s\right\|_{L^p}\lesssim\langle t\rangle^{-\alpha_p}\|u\|_{X(T)}^p.
	\end{align*}
	All components of the norm \eqref{X-Norm} are therefore controlled, which proves \eqref{Mapping-Estimate}.
\end{proof}

\begin{prop}[Difference estimate]\label{Prop-Difference}
	Let $p>\frac{7}{3}$ and $T\in(0,\infty]$. For every $u,\tilde u\in X(T)$, one has
	\begin{align}\label{Difference-Estimate}
		\|\Phi[u]-\Phi[\tilde u]\|_{X(T)}\lesssim\left(\|u\|_{X(T)}^{p-1}+\|\tilde u\|_{X(T)}^{p-1}\right)\|u-\tilde u\|_{X(T)}.
	\end{align}
\end{prop}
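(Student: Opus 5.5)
The plan is to rerun the proof of Proposition~\ref{Prop-Mapping}, with the source $|u|^p$ replaced by the difference source $|u|^p-|\tilde u|^p$. The linear parts of $\Phi[u]$ and $\Phi[\tilde u]$ cancel, so
\begin{align*}
	\Phi[u](t,\cdot)-\Phi[\tilde u](t,\cdot)=\int_0^tS_1(t-s)\bigl(|u(s,\cdot)|^p-|\tilde u(s,\cdot)|^p\bigr)\dd s .
\end{align*}
Every frequency-localized estimate from Proposition~\ref{Prop-Full-Linear} used there applies verbatim to this new source. It therefore suffices to prove the analogue of Lemma~\ref{Lemma-Nonlinear-Bounds}, with the same decay exponents $\gamma_p$ and $\eta_p$:
\begin{align*}
	\bigl\||u(t,\cdot)|^p-|\tilde u(t,\cdot)|^p\bigr\|_{L^1}&\lesssim\langle t\rangle^{-\gamma_p}\Bigl(\|u\|_{X(T)}^{p-1}+\|\tilde u\|_{X(T)}^{p-1}\Bigr)\|u-\tilde u\|_{X(T)},\\
	\bigl\||u(t,\cdot)|^p-|\tilde u(t,\cdot)|^p\bigr\|_{L^2}&\lesssim\langle t\rangle^{-\eta_p}\Bigl(\|u\|_{X(T)}^{p-1}+\|\tilde u\|_{X(T)}^{p-1}\Bigr)\|u-\tilde u\|_{X(T)}.
\end{align*}

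Both bounds start from the pointwise inequality
\begin{align*}
	\bigl||u|^p-|\tilde u|^p\bigr|\leqslant p\,|u-\tilde u|\bigl(|u|^{p-1}+|\tilde u|^{p-1}\bigr).
\end{align*}

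For the $L^1$ bound, apply H\"older's inequality with exponents $p$ and $\frac{p}{p-1}$. This gives
\begin{align*}
	\|u-\tilde u\|_{L^p}\bigl(\|u\|_{L^p}^{p-1}+\|\tilde u\|_{L^p}^{p-1}\bigr)\lesssim\langle t\rangle^{-p\alpha_p}\times(\cdots)=\langle t\rangle^{-\gamma_p}\times(\cdots).
\end{align*}

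For the $L^2$ bound, put $u-\tilde u$ in $L^2$ and the other factors in $L^\infty$. By the Gagliardo--Nirenberg bound \eqref{GN-Infinity}, applied to $u$ and to $\tilde u$, this gives
\begin{align*}
	\|u-\tilde u\|_{L^2}\bigl(\|u\|_{L^\infty}^{p-1}+\|\tilde u\|_{L^\infty}^{p-1}\bigr)\lesssim\langle t\rangle^{-\frac14-(p-1)\delta_p}\times(\cdots)=\langle t\rangle^{-\eta_p}\times(\cdots).
\end{align*}

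With these two source bounds, I would repeat the three steps of Proposition~\ref{Prop-Mapping} exactly.
\begin{itemize}
	\item The low-frequency $L^p$ part is handled by \eqref{Time-Convolution} with $a=\alpha_p$ and $b=\gamma_p>1$.
	\item The low-frequency $\nabla^k$ and $\partial_t$ parts in $L^2$ follow from the $L^1$ source bound, giving the rates $\frac14$, $\frac34$ and $\beta_p$.
	\item The middle- and high-frequency parts follow from the exponentially decaying $H^2$ bound together with the $L^2$ source bound. This yields $\langle t\rangle^{-\eta_p}$, which beats every required rate by \eqref{Eta-Greater-Beta}. The $L^p$ component then follows from $H^2\hookrightarrow L^p$.
\end{itemize}

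The one point needing care is the high-frequency step. The high-frequency propagator only maps $L^2$ into $H^2$, so the difference source must be controlled in $L^2$ and not merely in $L^1$. This needs $L^\infty$ control of $u$ and $\tilde u$, which is exactly what \eqref{GN-Infinity} supplies.

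A minor check is that the integrand is in $L^1\cap L^2$ for all $s$, so that Proposition~\ref{Prop-Full-Linear} applies; this follows from the two bounds themselves. Since $p>2$, the function $|\cdot|^p$ is $C^1$, so the pointwise inequality holds with no extra restriction.

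The time-continuity requirements for membership in $X(T)$ are inherited exactly as in Proposition~\ref{Prop-Mapping}. Taking the supremum over $t$ then yields \eqref{Difference-Estimate}.
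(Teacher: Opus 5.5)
Your proposal is correct and follows the paper's proof. The paper also derives the $L^1$ and $L^2$ bounds for $|u|^p-|\tilde u|^p$, with the same rates $\langle t\rangle^{-\gamma_p}$ and $\langle t\rangle^{-\eta_p}$, from the pointwise difference inequality and the decay built into the $X(T)$-norm, and then reruns the frequency decomposition of Proposition~\ref{Prop-Mapping}; your H\"older and Gagliardo--Nirenberg steps are what the paper's one-line proof leaves implicit.
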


\begin{proof}
	The pointwise difference inequality and the decay contained in the $X(T)$-norm give
	\begin{align*}
		\||u(t,\cdot)|^p-|\tilde u(t,\cdot)|^p\|_{L^1}&\lesssim\langle t\rangle^{-\gamma_p}\left(\|u\|_{X(T)}^{p-1}+\|\tilde u\|_{X(T)}^{p-1}\right)\|u-\tilde u\|_{X(T)},\\
		\||u(t,\cdot)|^p-|\tilde u(t,\cdot)|^p\|_{L^2}&\lesssim\langle t\rangle^{-\eta_p}\left(\|u\|_{X(T)}^{p-1}+\|\tilde u\|_{X(T)}^{p-1}\right)\|u-\tilde u\|_{X(T)}.
	\end{align*}
	Applying the same low-frequency and middle- and high-frequency decomposition as in the proof of Proposition~\ref{Prop-Mapping} yields \eqref{Difference-Estimate} similarly.
\end{proof}

\begin{proof}[Proof of Theorem~\ref{Thm-Global}]
	Let $C>0$ be a common constant independent of $T$ in \eqref{Mapping-Estimate} and \eqref{Difference-Estimate}. We now set
	\begin{align*}
	R:=2C\|(u_0,u_1)\|_{\ml A}.
	\end{align*} We choose $\delta>0$ sufficiently small so that $C(2C\delta)^{p-1}\leqslant\frac{1}{4}$. If $\|(u_0,u_1)\|_{\ml A}\leqslant\delta$,  then $CR^{p-1}\leqslant\frac{1}{4}$. For every $u\in X(\infty)$ satisfying $\|u\|_{X(\infty)}\leqslant R$, Proposition~\ref{Prop-Mapping} immediately leads to
	\begin{align*}
		\|\Phi[u]\|_{X(\infty)}\leqslant C\|(u_0,u_1)\|_{\ml A}+CR^p<R.
	\end{align*}
	Moreover, if $u,\tilde u$ belong to the same closed ball, then Proposition~\ref{Prop-Difference} yields
	\begin{align*}
		\|\Phi[u]-\Phi[\tilde u]\|_{X(\infty)}&\leqslant 2CR^{p-1}\|u-\tilde u\|_{X(\infty)}\leqslant\frac{1}{2}\|u-\tilde u\|_{X(\infty)}.
	\end{align*}
	In other words, $\Phi$ has a unique fixed point in this ball. This fixed point is a unique global in-time mild Sobolev solution and satisfies $\|u\|_{X(\infty)}\lesssim\|(u_0,u_1)\|_{\ml A}$. The estimates in Theorem \ref{Thm-Global} now follow from the definition of the $X(\infty)$-norm.
\end{proof}

\section{Positivity and pointwise estimates for the fundamental kernels}\label{Section-Kernels}

\subsection{Laplace representation and positivity}

Let $G_1^{(3)}(t,x)$ denote the three-dimensional velocity fundamental solution (the superscript ``$(3)$'' indicates the space dimension $n=3$), initially understood as the tempered distribution defined by
\begin{align*}
	\widehat{G_1^{(3)}}(t,\zeta)=\widehat S_1(t,\zeta)\equiv \widehat S_1(t,|\zeta|).
\end{align*}
Equivalently,
\begin{align*}
S_1(t)f(x)=G_1^{(3)}(t,x)\ast_{(x)}f(x).
\end{align*} Since its Fourier transform depends only on $|\zeta|$, the kernel $G_1^{(3)}$ is radial in the spatial variable. For $r=|x|$, we write $G_1^{(3)}(t,r):=G_1^{(3)}(t,x)$.  While the Fourier representation is suited to the frequency analysis developed in \cref{Section-Linear}, the following representation of its time-Laplace transform is going to reveal the positivity structure of the kernel (motivated by \cite{Hanyga-Seredynska=2010} for a broad class of scalar viscoelastic equations). An application of the partial Laplace transform with respect to $t$ addresses
\begin{align*}
	\widetilde G_1^{(3)}(s,r)&:=\ml L_{t\to s}\bigl[G_1^{(3)}\bigr](s,r)\\
	&:=\int_0^\infty \mathrm{e}^{-st}G_1^{(3)}(t,r)\dd t \ \ \text{for}\  \ s>0.
\end{align*}
Before stating the lemma and its proof, we briefly recall that a function $f\in\mathcal C^\infty\bigl((0,\infty)\bigr)$ is completely monotone if
\begin{align*}
(-1)^k f^{(k)}(s)\geqslant0
\ \ \text{for all}\ \  k\in\mathbb N_0,\  s>0,	
\end{align*}
whereas a nonnegative function $\psi\in\mathcal C^\infty\bigl((0,\infty)\bigr)$ is called a Bernstein function if $\psi'$ is completely monotone. By Bernstein's theorem, completely monotone functions are precisely the Laplace transforms of nonnegative Radon measures (see \cite[Chapter 1 and Chapter 3]{Schilling-Song-Vondracek=2012}).

\begin{lemma}[Laplace representation]\label{Lemma-Laplace-Kernel}
	For $s>0$ and $r>0$, one has
	\begin{align}\label{Laplace-G3}
		\widetilde G_1^{(3)}(s,r)=\frac1{4\pi r(1+s)}\mathrm{e}^{-\frac{rs}{\sqrt{1+s}}}.
	\end{align}
	Furthermore, for every $r>0$, there exists a nonnegative Radon measure
	$\mu_r$ on $[0,\infty)$ satisfying
	\begin{align*}
		\widetilde G_1^{(3)}(s,r)=\int_{[0,\infty)}\mathrm{e}^{-st}\,\mu_r(\mathrm{d}t)\ \ \text{for}\ \  s>0.
	\end{align*}
\end{lemma}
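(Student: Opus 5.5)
Two things have to be shown. First, I will compute $\widetilde G_1^{(3)}$ by Laplace-transforming the equation in time. Second, I will exhibit the result as a completely monotone function of $s$ and invoke Bernstein's theorem.

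For the first step, fix $s>0$ and Laplace-transform the velocity problem $\partial_t^2G-\Delta G-\Delta\partial_t G=0$ with data $G(0)=0$ and $\partial_tG(0)=\delta_0$. This gives
\begin{align*}
s^2\widetilde G_1^{(3)}-(1+s)\Delta\widetilde G_1^{(3)}=\delta_0,
\end{align*}
so that
\begin{align*}
-\Delta\widetilde G_1^{(3)}+\frac{s^2}{1+s}\widetilde G_1^{(3)}=\frac{1}{1+s}\,\delta_0 .
\end{align*}
I will justify this on the Fourier side. For fixed $\zeta$, the explicit multipliers in \cref{Section-Linear} give
\begin{align*}
\int_0^\infty\mathrm{e}^{-st}\widehat S_1(t,\zeta)\dd t=\frac{1}{s^2+(1+s)|\zeta|^2}.
\end{align*}
This is the Laplace transform of the solution of \eqref{Linear-Fourier-ODE} with data $(0,1)$, and it is valid for $s>0$ because all characteristic roots have nonpositive real part. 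Since $|\widehat S_1(t,\zeta)|\lesssim_{\zeta}1$ uniformly in $t$, Fubini lets me swap the Laplace transform and the inverse Fourier transform in the sense of tempered distributions. With $\kappa^2=s^2/(1+s)$, the standard Yukawa kernel $\frac{\mathrm{e}^{-\kappa r}}{4\pi r}$ then yields \eqref{Laplace-G3}.

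For the second step, it suffices to show that $s\mapsto\widetilde G_1^{(3)}(s,r)$ is completely monotone on $(0,\infty)$ for each fixed $r>0$. I will use three standard closure facts from \cite{Schilling-Song-Vondracek=2012}:
\begin{itemize}
\item products of completely monotone functions are completely monotone;
\item if $f$ is completely monotone and $\psi$ is a Bernstein function, then $f\circ\psi$ is completely monotone;
\item $s\mapsto\frac{1}{1+s}$ is completely monotone.
\end{itemize}
So the only thing left is that $\psi(s):=\frac{s}{\sqrt{1+s}}$ is a Bernstein function. Then $\mathrm{e}^{-r\psi(s)}$ is completely monotone, as the composition of $x\mapsto\mathrm{e}^{-rx}$ with $\psi$.

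The main obstacle is verifying that $\psi$ is Bernstein. I would write
\begin{align*}
\psi(s)=\sqrt{1+s}-\frac{1}{\sqrt{1+s}}.
\end{align*}
The first term $\sqrt{1+s}$ is Bernstein, since $s\mapsto\sqrt{s}$ is Bernstein and the shift preserves this. The second term $-(1+s)^{-1/2}$ has derivative $\frac12(1+s)^{-3/2}$, which is completely monotone. Hence $\psi'$ is a sum of completely monotone functions, and $\psi\geqslant0$ on $(0,\infty)$, so $\psi$ is Bernstein. If this decomposition gave trouble, I would fall back on the integral representation of $\sqrt{1+s}$ to express $\psi'$ directly as a Laplace transform of a positive density.

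Bernstein's theorem then provides a nonnegative Radon measure $\mu_r$ on $[0,\infty)$ with
\begin{align*}
\widetilde G_1^{(3)}(s,r)=\int_{[0,\infty)}\mathrm{e}^{-st}\,\mu_r(\mathrm{d}t)\ \ \text{for}\ \ s>0,
\end{align*}
which completes the proof of the lemma. Identifying $\mu_r$ with $G_1^{(3)}(t,r)\dd t$ would follow from uniqueness of the Laplace transform, but that identification is presumably part of the subsequent positivity statement rather than this lemma.
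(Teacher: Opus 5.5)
Your proposal is correct and follows essentially the same route as the paper. You Laplace-transform the velocity multiplier to get $\frac{1}{1+s}\bigl(|\zeta|^2+\frac{s^2}{1+s}\bigr)^{-1}$ and invert it with the Yukawa kernel. You then obtain complete monotonicity because $\psi(s)=\frac{s}{\sqrt{1+s}}$ has $\psi'(s)=\frac12(1+s)^{-1/2}+\frac12(1+s)^{-3/2}$ completely monotone (your decomposition $\psi=\sqrt{1+s}-(1+s)^{-1/2}$ yields exactly this derivative), and you finish with the composition and product closure properties and Bernstein's theorem.

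Your Fubini justification is a small addition that the paper leaves implicit. The cleanest uniform bound there is $|\widehat S_1(t,\zeta)|\leqslant t$, since your $\zeta$-dependent constant behaves like $|\zeta|^{-1}$ near the origin, although that is still locally integrable in $\mathbb{R}^3$. As you anticipate, the paper defers the identification $\mu_r(\mathrm{d}t)=G_1^{(3)}(t,r)\dd t$ to the next lemma.
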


\begin{proof}
	Taking the Laplace transform in time at the level of the velocity multiplier, we are able to deduce that
	\begin{align}\label{Supp-01}
		\widetilde{\widehat G}_1^{(3)}(s,\zeta)=\frac{1}{s^2+s|\zeta|^2+|\zeta|^2}=\frac{1}{1+s}\frac{1}{|\zeta|^2+\frac{s^2}{1+s}}.
	\end{align}
	For $\lambda>0$, the three-dimensional fundamental solution of the modified Helmholtz operator (see, for example, \cite{Aronszajn-Smith=1961}) satisfies
	\begin{align*}
		\mathcal{F}_{\zeta\to x}^{-1}\left[\frac{1}{|\zeta|^2+\lambda^2}\right](x)=\frac{1}{4\pi|x|}\mathrm{e}^{-\lambda|x|}\ \ \text{for}\ \  x\in\mathbb{R}^3\setminus\{0\}.
	\end{align*}
	Choosing $\lambda=\frac{s}{\sqrt{1+s}}$ and writing $r=|x|$, we obtain \eqref{Laplace-G3}.
	
	It remains to establish the complete monotonicity of the right-hand side of \eqref{Laplace-G3}. Actually, we know that
	\begin{align*}
		\psi(s):=\frac{s}{\sqrt{1+s}} \ \ \Rightarrow\ \ \psi'(s)=\frac{1}{2}(1+s)^{-\frac{1}{2}}+\frac{1}{2}(1+s)^{-\frac{3}{2}}>0.
	\end{align*}
	Hence, $\psi'$ is completely monotone, and consequently $\psi$ is a Bernstein function. It follows from \cite[Theorem~3.6]{Schilling-Song-Vondracek=2012} that $s\mapsto\mathrm{e}^{-r\psi(s)}$ is completely monotone for every fixed $r>0$. According to the well-known fact that $s\mapsto(1+s)^{-1}$ is also completely monotone and the product of two completely monotone functions remains completely monotone, the
	right-hand side of \eqref{Laplace-G3} is completely monotone. Bernstein's theorem \cite[Theorem~1.4]{Schilling-Song-Vondracek=2012} therefore yields a
	nonnegative Radon measure $\mu_r$ on $[0,\infty)$ whose Laplace transform is given by \eqref{Laplace-G3}.
\end{proof}

The last lemma yields a nonnegative measure $\mu_r$ in the time variable, nevertheless it does not by itself identify this measure with the inverse Fourier kernel. The next lemma provides the required identification and converts the measure positivity into pointwise positivity of the velocity fundamental solution.

\begin{lemma}[Positivity of the three-dimensional velocity kernel]\label{Lemma-Kernel-Identification}
	For every fixed $r>0$, the inverse Fourier representation of $G_1^{(3)}(t,\cdot)$ defines a function $K_r(t):=G_1^{(3)}(t,r)$ such that
	\begin{align*}
		K_r\in\ml C\bigl((0,\infty)\bigr)\cap L_{\mathrm{loc}}^1\bigl([0,\infty)\bigr).
	\end{align*}
	Moreover, for every $s>0$, the following identity is valid:
	\begin{align}\label{Kernel-Identification-Laplace}
		\int_0^\infty\mathrm{e}^{-st}K_r(t)\dd t=\frac{1}{4\pi r(1+s)}\mathrm{e}^{-\frac{rs}{\sqrt{1+s}}}.
	\end{align}
	Consequently, the nonnegative Radon measure $\mu_r$ obtained in Lemma~\ref{Lemma-Laplace-Kernel} is absolutely continuous with respect to the Lebesgue measure and satisfies $\mu_r(\mathrm{d} t)=K_r(t)\dd t$. In particular,
	\begin{align}\label{G3-Positive}
		G_1^{(3)}(t,r)\geqslant0
	\end{align}
	for every $t>0$ and $r>0$.
\end{lemma}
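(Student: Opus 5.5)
I would split $\widehat S_1(t,\zeta)$ by frequency and handle low frequencies and high frequencies separately. The aim is to show that, for fixed $r>0$, the function $t\mapsto G_1^{(3)}(t,r)$ is a genuine continuous function on $(0,\infty)$ with controlled growth at $t\to\infty$ and integrable behaviour at $t\to0^+$. I would then compute its Laplace transform by Fubini and compare with \eqref{Laplace-G3}. Uniqueness of the Laplace transform for measures then identifies $\mu_r=K_r\dd t$. Finally, continuity of $K_r$ on $(0,\infty)$ upgrades ``$K_r\geqslant0$ a.e.'' to $K_r(t)\geqslant0$ everywhere, which is \eqref{G3-Positive}.

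\emph{Step 1: regularity of $K_r$.} Using radial symmetry, for a radial multiplier in $\mb R^3$ one has
\begin{align*}
G_1^{(3)}(t,r)=\frac{1}{2\pi^2 r}\int_0^\infty \rho\sin(r\rho)\,\widehat S_1(t,\rho)\dd\rho .
\end{align*}
For fixed $t>0$, the high-frequency part behaves like $\widehat S_1\approx \rho^{-2}(\mathrm{e}^{\lambda_+t}-\mathrm{e}^{\lambda_-t})$ with $\lambda_+(\rho)\to-1$. So $\widehat S_1$ does not decay exponentially in $\rho$; it decays only like $\rho^{-2}\mathrm{e}^{-t}$, and the integrand is $O(\rho^{-1})$ times an oscillating factor. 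I would therefore write $\lambda_+(\rho)=-1+O(\rho^{-2})$ and isolate the principal term $\mathrm{e}^{-t}\rho^{-2}$. Its inverse transform is explicit, namely $\mathrm{e}^{-t}/(4\pi r)$, which is smooth in $t$ for $r>0$. The remainder, together with the $\mathrm{e}^{\lambda_-t}$ part, has multiplier bounded by $\rho^{-4}$ (plus $\min\{t,\rho^{-2}\}$-type bounds near $t=0$), so it is absolutely integrable against $\rho\sin(r\rho)$. The low and middle frequencies give a bounded integrand with compact or exponentially damped support. Dominated convergence then gives continuity on $(0,\infty)$ and a bound $|K_r(t)|\lesssim_r 1+t$ on $[0,\infty)$ (the low-frequency part is $\lesssim\int_0^{2}\rho^2 t\,\mathrm{e}^{-t\rho^2/2}\dd\rho$, which is bounded polynomially). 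This gives local integrability and shows that the Laplace integral converges for $s>0$.

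\emph{Step 2: the Laplace identity.} I would compute $\int_0^\infty \mathrm{e}^{-st}K_r(t)\dd t$ by inserting the $\rho$-representation and interchanging the integrals. The same splitting makes this legitimate. The principal piece is explicit, and the remainder has absolutely integrable kernel $\mathrm{e}^{-st}\rho|\cdot|$ in $(t,\rho)$. This yields
\begin{align*}
\frac{1}{2\pi^2 r}\int_0^\infty\frac{\rho\sin(r\rho)}{s^2+(1+s)\rho^2}\dd\rho .
\end{align*}
This is the radial form of the inverse Fourier transform of \eqref{Supp-01}, and the residue computation (or the Helmholtz formula used in Lemma~\ref{Lemma-Laplace-Kernel}) gives \eqref{Kernel-Identification-Laplace}. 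The interchange for the $O(\rho^{-1})$ oscillatory tail is where I expect the main obstacle. I plan to handle it by subtracting the explicit $\rho^{-2}$ principal part, as above, rather than using an Abel-type damping factor $\mathrm{e}^{-\epsilon\rho}$ and taking a limit.

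\emph{Step 3: identification and positivity.} By Steps 1–2, the measures $K_r(t)\dd t$ (signed but locally finite, with exponentially convergent Laplace transform) and $\mu_r$ have equal Laplace transforms for all $s>0$. By injectivity of the Laplace transform on such measures, applied to the difference after multiplying by $\mathrm{e}^{-s_0t}$ to get finite measures, we get $\mu_r=K_r\dd t$. Hence $K_r\geqslant0$ a.e. Continuity on $(0,\infty)$ then gives $G_1^{(3)}(t,r)=K_r(t)\geqslant0$ for all $t,r>0$.
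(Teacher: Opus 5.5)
Your proof is correct, but it handles the slowly decaying high-frequency tail differently from the paper.

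The paper keeps the full high-frequency amplitude $A(t,\rho)=\chi_{\high}(\rho)\rho\,\widehat S_1(t,\rho)$ and integrates by parts once in $\rho$. It then uses the $\rho$-derivative symbol bounds of Lemma~\ref{Lemma-High-Small-Time} and Lemma~\ref{Lemma-High-Symbols} to get $|K_r^{\high}(t)|\lesssim_r\sqrt t$ near $t=0$ and exponential decay for $t\geqslant1$. The Laplace identity is obtained by Fubini on truncated kernels $K_{r,R}$ (frequencies $\rho\leqslant R$), followed by dominated convergence as $R\to\infty$. This needs those bounds uniformly in $R$, including the boundary term $A(t,R)$.

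You instead remove the only non-absolutely-integrable part, $\mathrm{e}^{-t}\rho^{-2}$, whose kernel $\mathrm{e}^{-t}/(4\pi r)$ is explicit. It is exactly the damped Newtonian singularity of $G_1^{(3)}$ at the origin, consistent with $\widetilde G_1^{(3)}(s,r)\sim\frac{1}{4\pi r(1+s)}$ as $r\to0^+$. What remains is absolutely integrable in $(t,\rho)$ against $\mathrm{e}^{-st}$. So Fubini applies directly, with no truncation and only zeroth-order bounds on the roots $a,b$ instead of derivative symbol estimates.

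The price is a weaker small-time bound, and one of your claims needs correcting. The fast-root piece $\chi_{\high}\mathrm{e}^{-b(\rho)t}/(b(\rho)-a(\rho))$ is not $O(\rho^{-4})$ uniformly as $t\to0^+$. A bound of type $\min\{t,\rho^{-2}\}$ is also not integrable against $\rho\dd\rho$ on $[R_0,\infty)$. You must use $b(\rho)\geqslant\frac{\rho^2}{2}$, which gives $\int_{R_0}^\infty\rho^{-1}\mathrm{e}^{-t\rho^2/2}\dd\rho\approx\frac12\log\frac1t$. Your method therefore yields $|K_r(t)|\lesssim_r1+t+|\log t|$, not the claimed $1+t$. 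Boundedness near $t=0$ is true, but proving it uses the oscillation, i.e.\ the paper's integration by parts.

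This does not affect the lemma. The logarithm is locally integrable, and
$\int_0^\infty\int_{R_0}^\infty\mathrm{e}^{-st}\rho^{-1}\mathrm{e}^{-t\rho^2/2}\dd\rho\dd t=\int_{R_0}^\infty\frac{\dd\rho}{\rho(s+\rho^2/2)}<\infty$.
So your Steps 2 and 3 go through as planned.
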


\begin{proof}
	Split the multiplier into its low-, middle- and high-frequency parts. Since the low- and middle-frequency multipliers are smooth and compactly supported in regard to the frequency variable, their inverse Fourier kernels are smooth for $t>0$. Moreover, the bounded-frequency estimate $|\widehat S_1(t,\rho)|\lesssim t$ for $0\leqslant t\leqslant1$ shows that
	\begin{align}\label{Low-Middle-Kernel-Small-Time}
		|K_r^{\low}(t)|+|K_r^{\midf}(t)|\lesssim_r t\ \ \text{for}\ \  0<t\leqslant1,
	\end{align}
	in which the inverse Fourier transform for radial functions is employed.
	
	For the high-frequency part, radial Fourier inversion expresses
	\begin{align}\label{High-Kernel-Radial}
		K_r^{\high}(t)=\frac{1}{2\pi^2r}\int_{R_0}^{\infty}\underbrace{\chi_{\high}(\rho)\widehat S_1(t,\rho)\rho}_{=:A(t,\rho)}\sin(r\rho)\dd\rho.
	\end{align}
     For $t\geqslant1$, Lemma~\ref{Lemma-High-Symbols} implies that $A(t,\cdot)$ is a symbol of order $-1$ and that $\partial_\rho A(t,\cdot)$ is integrable. For $0<t\leqslant1$, the estimates proved in Lemma~\ref{Lemma-High-Small-Time} give
	\begin{align*}
		|\partial_\rho A(t,\rho)|\lesssim\min\{t,\rho^{-2}\}+t\mathrm{e}^{-ct\rho^2}.
	\end{align*}
	For every compact interval $J\subset\subset(0,\infty)$, its right-hand side is bounded by an integrable function of $\rho$, uniformly for $t\in J$. Thanks to the boundary property that $A(t,\rho)$ vanishes at $\rho=R_0$ and tends to zero as $\rho\to\infty$, one integration by parts in \eqref{High-Kernel-Radial} allows
	\begin{align}\label{High-Kernel-IBP}
		K_r^{\high}(t)=\frac{1}{2\pi^2r^2}\int_{R_0}^{\infty}\partial_\rho A(t,\rho)\cos(r\rho)\dd\rho.
	\end{align}
	The preceding bounds and dominated convergence show that $K_r^{\high}\in\ml C\bigl((0,\infty)\bigr)$ for every
	fixed $r>0$. For $0<t\leqslant1$, Lemma~\ref{Lemma-High-Small-Time} further gives
	\begin{align*}
		\int_{R_0}^{\infty}|\partial_\rho A(t,\rho)|\dd\rho\lesssim\sqrt t.
	\end{align*}
	Therefore, \eqref{High-Kernel-IBP} implies
	\begin{align}\label{High-Kernel-Small-Time}
		|K_r^{\high}(t)|\lesssim_r\sqrt t \ \ \text{for}\ \  0<t\leqslant1.
	\end{align}
	Combining \eqref{Low-Middle-Kernel-Small-Time} and \eqref{High-Kernel-Small-Time}, we conclude our desired regularity for $K_r$.

For $R>R_0$, one may introduce the $R$-dependent kernel
\begin{align*}
	K_{r,R}(t):=\frac{1}{2\pi^2r}\int_0^R\widehat S_1(t,\rho)\rho\sin(r\rho)\dd\rho.
\end{align*}
Since the frequency integral is taken over a bounded interval, Fubini's theorem and the Laplace-transformed multiplier formula \eqref{Supp-01} with $\rho=|\zeta|$ give
\begin{align*}
	\int_0^\infty\mathrm{e}^{-st}K_{r,R}(t)\dd t=\frac{1}{2\pi^2r(1+s)}\int_0^R\frac{\rho\sin(r\rho)}{\rho^2+\frac{s^2}{1+s}}\dd\rho.
\end{align*}
Letting $R\to\infty$ and using the standard Fourier-sine transform formula
\begin{align*}
	\int_0^\infty\frac{\rho\sin(r\rho)}{\rho^2+\lambda^2}\dd\rho=\frac{\pi}{2}\mathrm{e}^{-\lambda r}\ \ \text{for}\ \  r>0,\ \lambda>0,
\end{align*}
we obtain
\begin{align}\label{Truncated-Laplace-Limit}
	\lim_{R\to\infty}\int_0^\infty\mathrm{e}^{-st}K_{r,R}(t)\dd t=\frac{1}{4\pi r(1+s)}\mathrm{e}^{-\frac{rs}{\sqrt{1+s}}}.
\end{align}
The estimates established above, including the boundary term arising from the truncation at $\rho=R$, yield uniformly in $R$ as follows:
\begin{align*}
	|K_{r,R}^{\high}(t)|&=\left|-\frac{A(t,R)\cos(rR)}{2\pi^2r^2}+\frac{1}{2\pi^2r^2}\int_{R_0}^{R}\partial_\rho A(t,\rho)\cos(r\rho)\dd\rho\right|\\
	&\lesssim_r |A(t,R)|+\int_{R_0}^{R}|\partial_\rho A(t,\rho)|\dd\rho\\
	&\lesssim_r
	\begin{cases}
		\sqrt{t}&\text{if}\ \ 0<t\leqslant1,\\
		\mathrm{e}^{-ct}&\text{if}\ \ t\geqslant1.
	\end{cases}
\end{align*}
The low- and middle-frequency kernels have at most polynomial growth in time. Therefore, for every fixed $s>0$, dominated convergence in \eqref{Truncated-Laplace-Limit} leads to \eqref{Kernel-Identification-Laplace}.

Consequently, the locally finite signed measure $K_r(t)\dd t$ and the nonnegative Radon measure $\mu_r$ have the same Laplace transform. The uniqueness of the Laplace transform (the uniqueness part of Bernstein's theorem \cite[Theorem~1.4]{Schilling-Song-Vondracek=2012}) yields $\mu_r(\mathrm{d}t)=K_r(t)\dd t$, which implies $K_r(t)\geqslant 0$ for almost every $t>0$. Since $K_r$ is continuous on $(0,\infty)$ and $\mu_r$ is nonnegative, we claim \eqref{G3-Positive}.
\end{proof}

\subsection{Dimension descent and the positive half-line kernel}

Let $G_1^{(1)}(t,z)$ denote the one-dimensional velocity fundamental solution. Its time-Laplace transform is given by
\begin{align}\label{Laplace-G1}
	\widetilde G_1^{(1)}(s,z)=\frac{1}{2s\sqrt{1+s}}\mathrm{e}^{-\frac{|z|s}{\sqrt{1+s}}},
\end{align}
in which we used \eqref{Supp-01} and the one-dimensional Yukawa kernel, whose deduction is completely parallel to that for the three-dimensional formula.

For later use, we note that, for every fixed $r>0$,
\begin{align*}
	G_1^{(1)}(\cdot,r),\int_r^\infty G_1^{(3)}(\cdot,\ell)\ell\dd\ell\in\mathcal{C}\bigl((0,\infty)\bigr)\cap L_{\mathrm{loc}}^1\bigl([0,\infty)\bigr).
\end{align*}
Indeed, the first assertion follows from the one-dimensional Fourier inversion formula, the bound
\begin{align*}
	|\widehat S_1(t,\rho)|\lesssim\min\{t,\rho^{-2}\},
\end{align*}
and dominated convergence, whereas the second follows from the frequency decomposition and the estimates used in the proof of Lemma~\ref{Lemma-Kernel-Identification}.

\begin{prop}[Dimension descent]\label{Prop-Dimension-Descent}
	For every $t>0$ and $r\geqslant0$, one has
	\begin{align}\label{Dimension-Descent}
		G_1^{(1)}(t,r)=2\pi\int_r^\infty G_1^{(3)}(t,\ell)\ell\dd\ell.
	\end{align}
	Consequently, for every fixed $t>0$, the function $r\mapsto G_1^{(1)}(t,r)$ is nonnegative and nonincreasing on $[0,\infty)$.
\end{prop}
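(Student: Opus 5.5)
The plan is to verify \eqref{Dimension-Descent} at the level of the time-Laplace transform. Both sides are continuous on $(0,\infty)$ and locally integrable near $t=0$, as noted just before the proposition, so uniqueness of the Laplace transform will conclude. Fix $r>0$ and $s>0$, and set $\lambda:=\frac{s}{\sqrt{1+s}}$.

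For the right-hand side, Fubini (justified below) and \eqref{Kernel-Identification-Laplace} give
\begin{align*}
	2\pi\int_r^\infty\widetilde G_1^{(3)}(s,\ell)\ell\dd\ell=\frac{2\pi}{4\pi(1+s)}\int_r^\infty\mathrm{e}^{-\lambda\ell}\dd\ell=\frac{1}{2(1+s)\lambda}\mathrm{e}^{-\lambda r}=\frac{1}{2s\sqrt{1+s}}\mathrm{e}^{-\frac{rs}{\sqrt{1+s}}}.
\end{align*}
This coincides with \eqref{Laplace-G1} at $z=r$. Hence the two continuous, locally integrable functions of $t$ have the same Laplace transform for every $s>0$, so they agree for all $t>0$.

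The main obstacle is justifying the interchange $\int_0^\infty\mathrm{e}^{-st}\int_r^\infty G_1^{(3)}(t,\ell)\ell\dd\ell\dd t=\int_r^\infty\ell\,\widetilde G_1^{(3)}(s,\ell)\dd\ell$, since $\ell$ ranges over an unbounded set. Here positivity helps. By Lemma~\ref{Lemma-Kernel-Identification}, $G_1^{(3)}\geqslant0$, so Tonelli applies to the nonnegative integrand $\mathrm{e}^{-st}G_1^{(3)}(t,\ell)\ell$ on $(0,\infty)\times(r,\infty)$. The double integral equals $\int_r^\infty\frac{\mathrm{e}^{-\lambda\ell}}{4(1+s)}\dd\ell<\infty$. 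This shows that the inner $\ell$-integral is finite for a.e.\ $t$ and is Laplace transformable, and it yields the identity of transforms. Continuity in $t$ of $\int_r^\infty G_1^{(3)}(\cdot,\ell)\ell\dd\ell$ is taken from the remark preceding the proposition, and uniqueness of the Laplace transform then gives equality everywhere on $(0,\infty)$. The case $r=0$ follows by letting $r\downarrow0$. On the left, $G_1^{(1)}(t,\cdot)$ is continuous by dominated convergence in the one-dimensional inversion. On the right, monotone convergence applies since the integrand is nonnegative, and the resulting limit is finite because it equals $G_1^{(1)}(t,0)$.

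The consequences are now immediate. For fixed $t>0$, \eqref{Dimension-Descent} writes $G_1^{(1)}(t,r)$ as $2\pi$ times the tail integral $\int_r^\infty G_1^{(3)}(t,\ell)\ell\dd\ell$ of the nonnegative function $\ell\mapsto G_1^{(3)}(t,\ell)\ell$, using \eqref{G3-Positive}. Such a tail integral is nonnegative and nonincreasing in $r$, so $r\mapsto G_1^{(1)}(t,r)$ is nonnegative and nonincreasing on $[0,\infty)$.
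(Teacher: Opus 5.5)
Your proposal is correct and follows the paper's proof essentially step for step. The paper likewise uses Tonelli via the positivity of $G_1^{(3)}$, matches the resulting transform with \eqref{Laplace-G1}, invokes uniqueness of the Laplace transform, passes to $r=0$ by monotone convergence, and reads off nonnegativity and monotonicity from the tail-integral form. One harmless slip: the double integral equals $\int_r^\infty\frac{\mathrm{e}^{-\lambda\ell}}{4\pi(1+s)}\dd\ell$, so you dropped a factor $\pi$ there, though not in your main computation.
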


\begin{proof}
	For $r>0$, the nonnegativity of $G_1^{(3)}$ and Tonelli's theorem gives
	\begin{align*}
		\mathcal{L}_{t\to s}\left[2\pi\int_r^\infty G_1^{(3)}(t,\ell)\ell\dd\ell\right](s,r)
		&=2\pi\int_r^\infty\widetilde G_1^{(3)}(s,\ell)\ell\dd\ell\\
		&=\frac{1}{2(1+s)}\int_r^\infty\mathrm{e}^{-\ell\psi(s)}\dd\ell\\
		&=\frac{1}{2s\sqrt{1+s}}\mathrm{e}^{-r\psi(s)},
	\end{align*}
	where we recall $\psi(s)=\frac{s}{\sqrt{1+s}}$. By \eqref{Laplace-G1}, the last expression is the Laplace transform of $G_1^{(1)}(t,r)$. The uniqueness of the Laplace transform for locally finite measures (see the uniqueness part of Bernstein's theorem \cite[Theorem~1.4]{Schilling-Song-Vondracek=2012}) proves \eqref{Dimension-Descent} for $r>0$, while the limit case $r=0$ follows by letting $r\to0^+$ and using monotone convergence. The nonnegativity and monotonicity with respect to $r$ follow automatically from \eqref{Dimension-Descent} and \eqref{G3-Positive}.
\end{proof}

For the half-line problem subject to the homogeneous Dirichlet boundary condition at the origin, the well-known method of odd extension allows to construct the Dirichlet kernel
\begin{align}\label{Dirichlet-Kernel}
	D(t,r,\rho):=G_1^{(1)}(t,r-\rho)-G_1^{(1)}(t,r+\rho)\ \ \text{for}\ \  t>0, \  r,\rho>0.
\end{align}
Indeed, if $f$ is defined on $(0,\infty)$ and $f_{\mathrm{odd}}$ denotes its odd extension to $\mathbb{R}$, then
\begin{align*}
	S_1(t)f_{\mathrm{odd}}(r)=\int_0^\infty D(t,r,\rho)f(\rho)\dd\rho\ \ \text{for}\ \  r>0.
\end{align*}
In particular, the evenness of $G_1^{(1)}(t,\cdot)$ and \eqref{Dimension-Descent} yield
\begin{align}\label{Dirichlet-Kernel-Integral}
	D(t,r,\rho)&=G_1^{(1)}(t,|r-\rho|)-G_1^{(1)}(t,r+\rho)\notag\\
	&=2\pi\int_{|r-\rho|}^{r+\rho}G_1^{(3)}(t,\ell)\ell\dd\ell\geqslant0.
\end{align}
This representation does not rely on any support property of the fundamental solution. This positivity replaces finite-speed support localization and is the key ingredient in the nonlinear lower-bound iteration.

\begin{remark}
	Although the dimension-descent identity has the same formal structure as those for the classical wave and heat kernels, its role in the present setting is quite different. For the wave equation, it is closely related to finite propagation and the singular behavior on the wave front, whereas for the heat equation positivity and radial monotonicity follow directly from the explicit Gaussian kernel. In the strongly damped wave case, the fundamental solution has infinite spatial support and its Fourier multiplier retains an oscillatory component. Therefore, dimension descent alone does not imply positivity. The radial monotonicity of $G_1^{(1)}$ with $r+\rho\geqslant |r-\rho|$ is the additional ingredient that yields
	\begin{align*}
		G_1^{(1)}(t,|r-\rho|)-G_1^{(1)}(t,r+\rho)\geqslant 0,
	\end{align*}
	and hence the positive half-line Dirichlet kernel required in the nonlinear lower-bound argument.
\end{remark}

\subsection{Uniform asymptotics in the moving wave-front region}

In this part, the moving wave-front region refers to the family of spatial shells
\begin{align*}
	r=t-\sigma\sqrt{t} \ \ \text{carrying}\ \ \sigma\in[\sigma_-,\sigma_+],
\end{align*}
where $0<\sigma_-<\sigma_+<\infty$ are fixed. That is to say, the observation point lies at a distance of order $\sqrt{t}$ inside the characteristic surface $r=t$. This is the region in which the low-frequency wave propagation and diffusive spreading balance, giving rise to the Gaussian profile $\exp\bigl(-\frac{\sigma^2}{2}\bigr)$. Because strong damping destroys finite propagation speed, the terminology ``wave-front'' is used here in an asymptotic sense and does not refer to the boundary of the support of the fundamental solution. 

We are going to identify the leading terms of the position and velocity kernels. We define the regular (in the sense of subtracting the Dirac singular part) position kernel via
\begin{align*}
	G_0^{(3),\reg}(t,x):=\ml F^{-1}_{\zeta\to x}\left[\widehat S_0(t,\zeta)-\mathrm{e}^{-t}\right].
\end{align*}
Namely, in the sense of tempered distributions where $\delta_0$ denotes the Dirac measure concentrated at the spatial origin, one notices that
\begin{align}\label{Position-Kernel-Decomposition}
	G_0^{(3)}(t,x)=\mathrm{e}^{-t}\delta_0+G_0^{(3),\reg}(t,x).
\end{align}
This definition is independent of the auxiliary frequency partition. For every fixed $t>0$, the symbol bounds in \cref{Appendix-Symbols} show that $G_0^{(3),\reg}(t,\cdot)$ is continuous on $\mb R^3\setminus\{0\}$.

\begin{prop}[Uniform wave-front asymptotic expansion]\label{Prop-Front-Asymptotics}
	Let $0<\sigma_-<\sigma_+<\infty$. Uniformly for $\sigma\in[\sigma_-,\sigma_+]$, one has, as $t\to\infty$,
	\begin{align}\label{Front-G1}
		G_1^{(3)}\bigl(t,t-\sigma\sqrt{t}\,\bigr)&=c_*t^{-\frac{3}{2}}\mathrm{e}^{-\frac{\sigma^2}{2}}+O(t^{-2}),\\
		\partial_rG_1^{(3)}\bigl(t,t-\sigma\sqrt{t}\,\bigr)&=c_*\sigma t^{-2}\mathrm{e}^{-\frac{\sigma^2}{2}}+O(t^{-\frac{5}{2}}),\label{Front-Dr-G1}\\
		G_0^{(3),\reg}\bigl(t,t-\sigma\sqrt{t}\,\bigr)&=-c_*\sigma t^{-2}\mathrm{e}^{-\frac{\sigma^2}{2}}+O(t^{-\frac{5}{2}}),\label{Front-G0}
	\end{align}
	where $c_*:=\frac{1}{4\pi\sqrt{2\pi}}$.
\end{prop}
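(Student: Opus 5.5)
The plan is to pass through the one-dimensional kernel, which localizes the low-frequency contribution onto the wave front.

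\textbf{Reduction to one dimension.} By Proposition~\ref{Prop-Dimension-Descent}, for radial kernels one has
\begin{align*}
G_1^{(3)}(t,r)=-\frac{1}{2\pi r}\partial_rG_1^{(1)}(t,r).
\end{align*}
Hence the three-dimensional asymptotics follow from those of the one-dimensional kernel and its first two $r$-derivatives at $r=t-\sigma\sqrt t$. Similarly, $\widehat S_0-\mathrm{e}^{-t}$ is radial, so the same descent applies to $G_0^{(3),\reg}$ in terms of its one-dimensional analogue.

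\textbf{Splitting the one-dimensional kernel.} We write
\begin{align*}
G_1^{(1)}(t,z)=\frac{1}{2\pi}\int_{\mb R}\mathrm{e}^{iz\xi}\widehat S_1(t,\xi)\dd\xi
\end{align*}
and split it with the cutoffs $\chi_{\low},\chi_{\midf},\chi_{\high}$. The middle part is $O(\mathrm{e}^{-ct})$ together with all its $z$-derivatives. The high part, after subtracting its non-smooth piece, is $O(\mathrm{e}^{-ct})$ for $|z|\geqslant1$, by integration by parts using the symbol bounds of Lemma~\ref{Lemma-High-Symbols} (the $\mathrm{e}^{\lambda_+t}$ piece behaves like $\mathrm{e}^{-t}$ times a smooth symbol). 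These parts therefore do not affect the expansions, since $r\approx t$.

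For the low part we use
\begin{align*}
\frac{\sin(t\omega)}{\omega}=\frac{\mathrm{e}^{it\omega}-\mathrm{e}^{-it\omega}}{2i\omega},
\qquad
\omega(\xi)=\xi-\frac{\xi^3}{8}+O(\xi^5).
\end{align*}
At $z=t-\sigma\sqrt t$, the phase $\mathrm{e}^{iz\xi-it\omega}$ is non-stationary and gives $O(t^{-N})$. The resonant phase $\mathrm{e}^{iz\xi+\ldots}$ with $\xi\mapsto-\xi$ pairing reduces to
\begin{align*}
\frac{1}{2\pi}\int\mathrm{e}^{-\frac{t\xi^2}{2}}\frac{\mathrm{e}^{-i\sigma\sqrt t\xi+it\xi^3/8+\cdots}}{2i\xi}\dd\xi .
\end{align*}
We rescale $\xi=\eta/\sqrt t$ and Taylor expand $\mathrm{e}^{it\xi^3/8}=1+i\eta^3/(8\sqrt t)+\cdots$.

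For $G_1^{(1)}$ itself, the $1/\xi$ factor yields the erf-type profile $\tfrac12\mathrm{erfc}$. Differentiating in $z$ removes the $1/\xi$, so
\begin{align*}
\partial_zG_1^{(1)}=-\frac{1}{\sqrt{2\pi t}}\mathrm{e}^{-\frac{\sigma^2}{2}}\cdot\frac12+O(t^{-1}).
\end{align*}
Dividing by $-2\pi r$ with $r=t(1+O(t^{-1/2}))$ gives the constant $c_*=\frac{1}{4\pi\sqrt{2\pi}}$ and the rate $t^{-3/2}$, with correction $O(t^{-2})$. The correction collects the cubic phase term (an odd moment, of relative size $t^{-1/2}$) and the factor $1/r$ expansion. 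Note that a relative error of $t^{-1/2}$ against $t^{-3/2}$ is exactly $O(t^{-2})$, as claimed.

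The second derivative brings down $i\xi=i\eta/\sqrt t$ and produces a Hermite factor $\sigma\mathrm{e}^{-\sigma^2/2}/t$. Together with $\partial_r(1/r)=O(t^{-2})\cdot t^{-3/2}$, this gives \eqref{Front-Dr-G1}; here one checks carefully that the sign is $+$ because $\partial_r=-t^{-1/2}\partial_\sigma$.

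For the position kernel,
\begin{align*}
\widehat S_0=\mathrm{e}^{-t\rho^2/2}\cos(t\omega)+O(\rho)\times\text{velocity-type terms}.
\end{align*}
The $\cos$ piece is one $\xi$-order higher than $\sin/\omega$, so it behaves like $\partial_t$ of the velocity kernel. Equivalently, on the low-frequency region one can use $\widehat S_0=\partial_t\widehat S_1+\rho^2\widehat S_1$, where $\rho^2\widehat S_1$ is lower order. Since $\partial_tG_1\approx-\partial_rG_1$ along the front to leading order, this gives $-c_*\sigma t^{-2}\mathrm{e}^{-\sigma^2/2}$. The $\mathrm{e}^{-t}$ subtraction removes the only non-decaying high-frequency piece.

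\textbf{Main obstacle.} The main difficulty is the bookkeeping that makes every error uniform in $\sigma\in[\sigma_-,\sigma_+]$ at the precise orders $t^{-2}$ and $t^{-5/2}$. The tail $|\xi|\geqslant t^{-1/2+\epsilon}$ of the Gaussian rescaling must be controlled by $\mathrm{e}^{-ct^{2\epsilon}}$. The remainder from the cubic/quintic Taylor expansion of the phase must be bounded by moments $\int|\eta|^k\mathrm{e}^{-\eta^2/2}$. For the high-frequency part, I would first try a direct non-stationary integration by parts, several times in $\xi$, using the appendix symbol bounds, to obtain $O(\mathrm{e}^{-ct})$ at $|z|\approx t$.
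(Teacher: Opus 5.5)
Your argument is correct in substance, but it is organized differently from the paper's, and with more machinery than the estimate needs.

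\textbf{How the paper does it.} The paper stays in three dimensions throughout:
\begin{itemize}
\item It replaces $\omega(\rho)$ by $\rho$ in the whole low-frequency multiplier, before any splitting into exponentials.
\item It evaluates $G_{1,\mathrm P}^{(3)}$ and $G_{0,\mathrm P}^{(3)}$ in closed form by product-to-sum identities and the Gaussian cosine integral. The outgoing wave then appears explicitly as the exponentially small term $\mathrm{e}^{-(t+r)^2/(2t)}$, so no non-stationary phase argument is needed.
\item It bounds the multiplier errors only in absolute value: $|E_1|\lesssim\mathrm{e}^{-t\rho^2/2}(\rho+t\rho^2)$ and $|E_0|\lesssim\mathrm{e}^{-t\rho^2/2}(\rho+t\rho^3)$. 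Because the radial inversion formula carries the factor $1/r\approx t^{-1}$, these crude bounds already give $O(t^{-2})$ and $O(t^{-5/2})$.
\end{itemize}

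\textbf{What in your route is unnecessary.} Your descent to one dimension is cosmetic. Applying $-\frac{1}{2\pi r}\partial_r$ to the one-dimensional cosine inversion gives literally the same radial formula. This holds for any radial multiplier, in the distributional sense on $r>0$, so you do not need Proposition~\ref{Prop-Dimension-Descent}, and it also covers $G_0^{(3),\reg}$. Likewise, the splitting into resonant and non-resonant pieces, the tail cut at $|\xi|\geqslant t^{-1/2+\epsilon}$, and the Taylor moments you list as the main obstacle can all be dropped. The same absolute bound on the multiplier error gives $\partial_zG_1^{(1)}$ up to $O(t^{-1})$ and $\partial_z^2G_1^{(1)}$ up to $O(t^{-3/2})$ directly. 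What your route does buy is structural insight: the one-dimensional erf profile, and the identity $\widehat S_0=\partial_t\widehat S_1+\rho^2\widehat S_1$, which explains why \eqref{Front-G0} is the negative of \eqref{Front-Dr-G1}.

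\textbf{Two points to fix if you keep your version.}
\begin{enumerate}
\item With your convention the non-stationary piece is $\mathrm{e}^{i(z\xi+t\omega)}$, not $\mathrm{e}^{i(z\xi-t\omega)}$. Moreover, $\sin(t\omega)/\omega$ may only be split into exponentials after inserting $i\xi$ or $(i\xi)^2$. On $G_1^{(1)}$ itself the amplitude $1/\omega$ is singular, so the outgoing piece is a principal-value integral. It then contributes $\frac{1}{4}+o(1)$, not $O(t^{-N})$.
\item The relation $\partial_tG_1^{(3)}\approx-\partial_rG_1^{(3)}$ cannot be obtained by differentiating \eqref{Front-G1} in $t$. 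It has to be proved at the multiplier level:
\begin{itemize}
\item The low-frequency symbol of $\partial_t\widehat S_1$ equals $\mathrm{e}^{-t\rho^2/2}\cos(t\rho)$ up to an error bounded by $\mathrm{e}^{-t\rho^2/2}(\rho+t\rho^3)$. This is exactly the paper's $m_{0,\mathrm P}$ and $E_0$ computation.
\item The term $\rho^2\widehat S_1-\mathrm{e}^{-t}$ contributes $O(t^{-5/2})$ at low frequencies, and is an exponentially decaying symbol of order $-2$ at high frequencies.
\end{itemize}
\end{enumerate}
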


\begin{proof}
	We compute the principal terms here and refer to \cref{Appendix-Front} for the uniform remainder estimates. The principal low-frequency multipliers are the diffusion-waves, namely,
	\begin{align*}
		m_{1,\mathrm P}(t,\rho):=\mathrm{e}^{-\frac{t\rho^2}{2}}\frac{\sin(t\rho)}{\rho}\ \ \text{and}\ \ m_{0,\mathrm P}(t,\rho):=\mathrm{e}^{-\frac{t\rho^2}{2}}\cos(t\rho).
	\end{align*}
	Concerning a radial multiplier $m=m(\rho)$ in dimension three, one may express its inverse Fourier transform via
	\begin{align*}
		\ml F^{-1}[m](r)=\frac{1}{2\pi^2r}\int_0^\infty m(\rho)\rho\sin(r\rho)\dd\rho.
	\end{align*}
	The product-to-sum identities and the Fourier transform of a Gaussian give
	\begin{align}\label{Principal-G1}
		G_{1,\mathrm P}^{(3)}(t,r)&=\frac{c_*}{r\sqrt{t}}\left(\mathrm{e}^{-\frac{(t-r)^2}{2t}}-\mathrm{e}^{-\frac{(t+r)^2}{2t}}\right),\\
		G_{0,\mathrm P}^{(3)}(t,r)&=\frac{c_*}{rt^{\frac{3}{2}}}\left((r+t)\mathrm{e}^{-\frac{(r+t)^2}{2t}}+(r-t)\mathrm{e}^{-\frac{(r-t)^2}{2t}}\right).\label{Principal-G0}
	\end{align}
	For instance, \eqref{Principal-G1} is deduced according to
	\begin{align*}
		G_{1,\mathrm P}^{(3)}(t,r)
		&=\frac{1}{2\pi^2r}\int_0^\infty\mathrm{e}^{-\frac{t\rho^2}{2}}\sin(t\rho)\sin(r\rho)\dd\rho\\
		&=\frac{1}{4\pi^2r}\int_0^\infty\mathrm{e}^{-\frac{t\rho^2}{2}}\left[\cos\bigl((t-r)\rho\bigr)-\cos\bigl((t+r)\rho\bigr)\right]\mathrm{d}\rho,
	\end{align*}
	associated with the Gaussian identity
\begin{align*}
	\int_0^\infty\mathrm{e}^{-\frac{t\rho^2}{2}}\cos\bigl((t\pm r)\rho\bigr)\dd\rho=\sqrt{\frac{\pi}{2t}}\,\mathrm{e}^{-\frac{(t\pm r)^2}{2t}}.
\end{align*}
	For $r=t-\sigma\sqrt{t}$, uniformly for $\sigma\in[\sigma_-,\sigma_+]$, one has
	\begin{align*}
		\mathrm{e}^{-\frac{(t-r)^2}{2t}}=\mathrm{e}^{-\frac{\sigma^2}{2}},\ \ \mathrm{e}^{-\frac{(t+r)^2}{2t}}=O(\mathrm{e}^{-ct}),\ \ \frac{1}{r}=\frac{1}{t}\bigl(1+O(t^{-\frac{1}{2}})\bigr).
	\end{align*}
	It follows from \eqref{Principal-G1} and \eqref{Principal-G0} that the principal kernels have the leading terms stated in \eqref{Front-G1} and \eqref{Front-G0}. Differentiating \eqref{Principal-G1} in regard to $r$, we also obtain
	\begin{align*}
		\partial_rG_{1,\mathrm P}^{(3)}(t,r)=-\frac{c_*}{r^2\sqrt{t}}\left(\mathrm{e}^{-\frac{(t-r)^2}{2t}}-\mathrm{e}^{-\frac{(t+r)^2}{2t}}\right)+\frac{c_*}{r\sqrt{t}}\left(\frac{t-r}{t}\mathrm{e}^{-\frac{(t-r)^2}{2t}}+\frac{t+r}{t}\mathrm{e}^{-\frac{(t+r)^2}{2t}}\right),
	\end{align*}
	which leads to \eqref{Front-Dr-G1} analogously.
	
	On the low-frequency support, we have the following asymptotic expansion:
	\begin{align}\label{Omega-Expansion}
		\omega(\rho)=\rho-\frac{\rho^3}{8}+O(\rho^5).
	\end{align}
	The uniform estimates in \cref{Appendix-Front} show that the difference between the exact low-frequency kernels and the principal kernels is $O(t^{-2})$ for $G_1^{(3)}$, and $O(t^{-\frac{5}{2}})$ for $\partial_rG_1^{(3)}$ as well as $G_0^{(3),\reg}$. The middle-frequency contribution is exponentially small by \eqref{Middle-Matrix}, while the regular high-frequency contribution is exponentially small with arbitrary spatial decay by \cref{Coro-High-Spatial}. These estimates are uniform for $\sigma\in[\sigma_-,\sigma_+]$ and complete the proof.
\end{proof}

The uniform expansion immediately yields a positive lower bound on every fixed compact subregion of the principal moving region.

\begin{coro}[Positivity in the moving wave-front region]\label{Coro-Positive-Front}
	Let $0<\sigma_-<\sigma_+<\infty$. There exist $T_{\mathrm{fr}},c_{\mathrm{fr}}>0$ such that
	\begin{align*}
		G_1^{(3)}(t,\ell)\geqslant c_{\mathrm{fr}}t^{-\frac{3}{2}}
	\end{align*}
	for $t\geqslant T_{\mathrm{fr}}$ and $\ell\in[t-\sigma_+\sqrt{t}, t-\sigma_-\sqrt{t}\,]$.
\end{coro}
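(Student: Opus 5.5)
The corollary follows directly from the uniform expansion \eqref{Front-G1} in Proposition~\ref{Prop-Front-Asymptotics}; the only work is to reparametrize the interval and absorb the remainder.

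First, rewrite the interval in the shell variable. For $\ell\in[t-\sigma_+\sqrt t,\,t-\sigma_-\sqrt t\,]$, set $\sigma:=(t-\ell)/\sqrt t$. Then $\sigma\in[\sigma_-,\sigma_+]$ and $\ell=t-\sigma\sqrt t$, so every point of the interval is a point on one of the moving shells covered by Proposition~\ref{Prop-Front-Asymptotics}. We may assume $t>\sigma_+^2$, so that $\ell>0$ and $G_1^{(3)}(t,\ell)$ is defined pointwise. This holds automatically once $T_{\mathrm{fr}}>\sigma_+^2$.

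Next, apply \eqref{Front-G1}. Uniformity in $\sigma\in[\sigma_-,\sigma_+]$ provides constants $C_0,T_0>0$, independent of $\sigma$, such that
\begin{align*}
	\Bigl|G_1^{(3)}\bigl(t,t-\sigma\sqrt t\,\bigr)-c_*t^{-\frac32}\mathrm{e}^{-\frac{\sigma^2}{2}}\Bigr|\leqslant C_0t^{-2}
\end{align*}
for all $t\geqslant T_0$ and all such $\sigma$. Since $\sigma\leqslant\sigma_+$, the leading term satisfies $c_*t^{-3/2}\mathrm{e}^{-\sigma^2/2}\geqslant c_*\mathrm{e}^{-\sigma_+^2/2}t^{-3/2}$. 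Consequently,
\begin{align*}
	G_1^{(3)}(t,\ell)\geqslant t^{-\frac32}\Bigl(c_*\mathrm{e}^{-\frac{\sigma_+^2}{2}}-C_0t^{-\frac12}\Bigr).
\end{align*}

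Finally, choose $T_{\mathrm{fr}}\geqslant\max\{T_0,\sigma_+^2+1\}$ large enough that $C_0T_{\mathrm{fr}}^{-1/2}\leqslant\frac12c_*\mathrm{e}^{-\sigma_+^2/2}$. With this choice the conclusion holds with $c_{\mathrm{fr}}:=\frac12c_*\mathrm{e}^{-\sigma_+^2/2}$.

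There is no genuine obstacle here. The one point to keep in view is that the $O(t^{-2})$ remainder must be uniform in $\sigma$; this is exactly the uniformity asserted in Proposition~\ref{Prop-Front-Asymptotics}, with the remainder estimates justified in \cref{Appendix-Front}. Without that uniformity, a single threshold $T_{\mathrm{fr}}$ valid for the whole interval could not be extracted.
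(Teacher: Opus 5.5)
Your proposal is correct and matches the paper's proof: reparametrize as $\ell=t-\sigma\sqrt{t}$, use the uniform expansion \eqref{Front-G1} together with $\inf_{\sigma\in[\sigma_-,\sigma_+]}\mathrm{e}^{-\frac{\sigma^2}{2}}=\mathrm{e}^{-\frac{\sigma_+^2}{2}}$, and absorb the uniform $O(t^{-2})$ remainder for large $t$. You even arrive at the same constant $c_{\mathrm{fr}}=\frac{c_*}{2}\mathrm{e}^{-\frac{\sigma_+^2}{2}}$, and your remark that $T_{\mathrm{fr}}>\sigma_+^2$ guarantees $\ell>0$ is a harmless extra detail the paper leaves implicit.
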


\begin{proof}
	Write $\ell=t-\sigma\sqrt{t}$ with $\sigma\in[\sigma_-,\sigma_+]$. Due to the fact that
	\begin{align}\label{Eq-1}
		\inf_{\sigma\in[\sigma_-,\sigma_+]}\mathrm{e}^{-\frac{\sigma^2}{2}}=\mathrm{e}^{-\frac{\sigma_+^2}{2}}>0,
	\end{align}
	the uniform expansion \eqref{Front-G1}, together with the triangle inequality
	\begin{align*}
		G_1^{(3)}(t,\ell)\geqslant c_*t^{-\frac{3}{2}}\mathrm{e}^{-\frac{\sigma^2}{2}}-Ct^{-2}\geqslant \frac{c_*}{2}\mathrm{e}^{-\frac{\sigma_+^2}{2}}t^{-\frac{3}{2}}
	\end{align*}
	for suitably large time $t\geqslant T_{\mathrm{fr}}$ such that $t^{-2}=o(t^{-\frac{3}{2}})$, proves the assertion.
\end{proof}

The next proposition transfers the pointwise kernel expansion to compactly supported initial data without any radial symmetry assumption.

\begin{prop}[Positive linear lower bound]\label{Prop-Linear-Seed}
	Assume that $u_0,u_1\in\mathcal{C}_0^\infty$ and additionally that $M_1$ in \eqref{Positive-Mean} is positive. Define
	\begin{align*}
		W_{\lin}(t,r):=r\ml M\bigl[S_0(t)u_0+S_1(t)u_1\bigr](r).
	\end{align*}
	For every fixed $0<\sigma_-<\sigma_+<\infty$, one has
	\begin{align*}
		W_{\lin}\bigl(t,t-\sigma\sqrt{t}\,\bigr)=c_*M_1\mathrm{e}^{-\frac{\sigma^2}{2}}t^{-\frac{1}{2}}+O(t^{-1})\ \ \text{as}\ \ t\to\infty,
	\end{align*}
	uniformly for $\sigma\in[\sigma_-,\sigma_+]$. Consequently, there exist $T_{\lin},c_{\lin}>0$ such that
	\begin{align*}
		W_{\lin}(t,r)\geqslant c_{\lin}t^{-\frac{1}{2}}
	\end{align*}
	whenever $t\geqslant T_{\lin}$ and $r=t-\sigma\sqrt{t}$ for $\sigma\in[\sigma_-,\sigma_+]$.
\end{prop}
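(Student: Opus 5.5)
The plan is to write the linear solution as a spatial convolution with the kernels and to Taylor-expand the kernels around the origin, using that the data are compactly supported. By \eqref{Position-Kernel-Decomposition},
\begin{align*}
	S_0(t)u_0+S_1(t)u_1=\mathrm{e}^{-t}u_0+G_0^{(3),\reg}(t,\cdot)\ast_{(x)}u_0+G_1^{(3)}(t,\cdot)\ast_{(x)}u_1 .
\end{align*}
Fix $R$ with $\supp u_0\cup\supp u_1\subset B_R$. For $r=t-\sigma\sqrt t$ and large $t$, we have $r>R$, so the term $\mathrm{e}^{-t}u_0$ vanishes on the sphere of radius $r$. For the two convolution terms, $|x-y|\in[r-R,r+R]$ whenever $|x|=r$ and $|y|<R$. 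Writing $|x-y|=t-\sigma'\sqrt t$ with $\sigma'=\sigma+O(t^{-1/2})$, the point $\sigma'$ lies in a slightly enlarged compact interval $[\sigma_-/2,2\sigma_+]$. Hence Proposition~\ref{Prop-Front-Asymptotics} applies uniformly to $G_1^{(3)}$, $\partial_rG_1^{(3)}$ and $G_0^{(3),\reg}$ at all relevant points.

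\emph{Velocity term.} Since $G_1^{(3)}$ is radial, Taylor expansion in the radial variable gives
\begin{align*}
	G_1^{(3)}(t,|x-y|)=G_1^{(3)}(t,r)+O\Bigl(R\sup_{|\ell-r|\leqslant R}|\partial_rG_1^{(3)}(t,\ell)|\Bigr).
\end{align*}
By \eqref{Front-Dr-G1} the error is $O(t^{-2})$ uniformly. Integrating against $u_1(y)$ gives
\begin{align*}
	G_1^{(3)}(t,\cdot)\ast u_1(x)=M_1G_1^{(3)}(t,r)+O(t^{-2}).
\end{align*}
By \eqref{Front-G1}, this equals $c_*M_1\mathrm{e}^{-\sigma^2/2}t^{-3/2}+O(t^{-2})$. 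This holds pointwise on the sphere, so it survives taking the spherical mean $\ml M$; in particular no radial symmetry of $u_1$ is needed.

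\emph{Position term.} By \eqref{Front-G0}, $|G_0^{(3),\reg}(t,|x-y|)|\lesssim t^{-2}$ uniformly, so $|G_0^{(3),\reg}\ast u_0|\lesssim t^{-2}\|u_0\|_{L^1}$.

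Combining the two terms and multiplying by $r=t(1+O(t^{-1/2}))$ gives
\begin{align*}
	W_{\lin}=c_*M_1\mathrm{e}^{-\sigma^2/2}t^{-1/2}+O(t^{-1}).
\end{align*}
The lower bound then follows exactly as in Corollary~\ref{Coro-Positive-Front}: use $M_1>0$ and $\inf_\sigma\mathrm{e}^{-\sigma^2/2}=\mathrm{e}^{-\sigma_+^2/2}$, and take $c_{\lin}=\tfrac{c_*}{2}M_1\mathrm{e}^{-\sigma_+^2/2}$ for $t\geqslant T_{\lin}$.

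The main obstacle is the justification step. The asymptotics in Proposition~\ref{Prop-Front-Asymptotics} must be applied at shifted points, with the derivative expansion \eqref{Front-Dr-G1} holding uniformly on the enlarged $\sigma$-range. We also need $G_1^{(3)}(t,\cdot)$ to be $\ml C^1$ near the shell, so that the mean value theorem can be used. Both follow from Proposition~\ref{Prop-Front-Asymptotics}, which is stated for arbitrary fixed $0<\sigma_-<\sigma_+$, together with the regularity of the kernels away from the origin established earlier. Finally, it must be checked that the convolution representations, which are defined distributionally, agree pointwise with these continuous kernels for $\ml C_0^\infty$ data; this is where the Dirac part in \eqref{Position-Kernel-Decomposition} has to be separated off.
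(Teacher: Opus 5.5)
Your proposal is correct and follows essentially the same route as the paper. You split off the singular part $\mathrm{e}^{-t}u_0$ via \eqref{Position-Kernel-Decomposition} and apply the mean value theorem with \eqref{Front-Dr-G1} on a slightly enlarged $\sigma$-interval, giving $S_1(t)u_1(x)=M_1G_1^{(3)}(t,r)+O(t^{-2})$ uniformly on the sphere. You then bound the regular position term by \eqref{Front-G0} and multiply the spherical mean by $r$, exactly as the paper does. The justification points you flag, namely the pointwise meaning of the convolutions away from the origin and the $\ml C^1$ regularity of the kernel near the shell, are passed over tacitly in the paper, so your account is slightly more careful on those details.
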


\begin{proof}
	Let the supports of the initial data be contained in $B_R$ with some $R>0$. Fix $x=r\omega$, $r=t-\sigma\sqrt{t}$ and $\sigma\in[\sigma_-,\sigma_+]$. For $|y|\leqslant R$, one has $\bigl||x-y|-r\bigr|\leqslant |y|\leqslant R$. Hence, after slightly enlarging the compact interval of admissible values of $\sigma$, all radii between $r$ and $|x-y|$ remain in the moving wave-front region. The mean value theorem and \eqref{Front-Dr-G1} therefore yield
	\begin{align*}
		G_1^{(3)}(t,|x-y|)=G_1^{(3)}(t,r)+O\bigl(|y|t^{-2}\bigr)
	\end{align*}
	uniformly in $\omega$, $\sigma$ and $|y|\leqslant R$. It follows that
	\begin{align*}
		S_1(t)u_1(x)&=G_1^{(3)}(t,r)\int_{B_R}u_1(y)\dd y+\int_{B_R}\left[G_1^{(3)}(t,|x-y|)-G_1^{(3)}(t,r)\right]u_1(y)\dd y\\
		&=M_1G_1^{(3)}(t,r)+O(t^{-2}).
	\end{align*}
	The uniform estimate \eqref{Front-G0} similarly implies that the regular part of $S_0(t)u_0(x)$ is $O(t^{-2})$. By \eqref{Position-Kernel-Decomposition}, its singular part is $\mathrm{e}^{-t}u_0(x)$, which vanishes for all sufficiently large $t$ because $r\to\infty$.
	
	Taking the spherical mean, multiplying by $r=t-\sigma\sqrt{t}$, and using \eqref{Front-G1}, we obtain
	\begin{align*}
		W_{\lin}\bigl(t,t-\sigma\sqrt{t}\,\bigr)&=r\left[M_1G_1^{(3)}(t,r)+O(t^{-2})\right]\\
		&=\bigl(t-\sigma\sqrt{t}\,\bigr)\left[c_*M_1\mathrm{e}^{-\frac{\sigma^2}{2}}t^{-\frac{3}{2}}+O(t^{-2})\right]\\
		&=c_*M_1\mathrm{e}^{-\frac{\sigma^2}{2}}t^{-\frac{1}{2}}+O(t^{-1})
	\end{align*}
	uniformly for $\sigma\in[\sigma_-,\sigma_+]$. Thanks to $M_1>0$ and \eqref{Eq-1}, the asserted lower bound follows for all sufficiently large $t$, i.e. $t\geqslant T_{\lin}$.
\end{proof}

\subsection{Kernel lower bounds and moment identities}

We introduce the following characteristic variables associated with a target point $(t,r)$ and a source point $(s,\rho)$:
\begin{align}\label{Characteristic-Variables}
	\alpha:=t+r,\ \ \beta:=t-r,\ \ \xi:=s+\rho,\ \ \eta:=s-\rho,\ \ \tau:=t-s,
\end{align}
which fulfill the relations
\begin{align}\label{Characteristic-Identities}
	\tau-(r-\rho)=\beta-\eta\ \ \text{and}\ \ r+\rho-\tau=\xi-\beta.
\end{align}

\begin{lemma}[A uniform positive lower bound for the Dirichlet kernel]\label{Lemma-Saturated-Kernel}
	There exist $c_D,T_D>0$ with the following property. Let $\tau\geqslant T_D$ and $r\geqslant\rho>0$. If $\beta-\eta\geqslant\frac{1}{4}\sqrt{\tau}$ and $\xi-\beta\geqslant-\frac{1}{8}\sqrt{\tau}$, then 
	\begin{align*}
	D(\tau,r,\rho)\geqslant c_D.
	\end{align*}
\end{lemma}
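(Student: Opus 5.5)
By \eqref{Dirichlet-Kernel-Integral} we have
\begin{align*}
D(\tau,r,\rho)=2\pi\int_{r-\rho}^{r+\rho}G_1^{(3)}(\tau,\ell)\ell\dd\ell
\end{align*}
when $r\geqslant\rho$. The integrand is nonnegative by \eqref{G3-Positive}, so it suffices to find a subinterval of $[r-\rho,r+\rho]$ of length comparable to $\sqrt\tau$ on which Corollary~\ref{Coro-Positive-Front} applies. On that subinterval, $G_1^{(3)}(\tau,\ell)\ell\gtrsim\tau^{-3/2}\cdot\tau=\tau^{-1/2}$, and integrating over length $\approx\sqrt\tau$ gives a positive constant.

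\textbf{Locating the window.} By \eqref{Characteristic-Identities}, the hypotheses read
\begin{align*}
r-\rho\leqslant\tau-\tfrac14\sqrt\tau\quad\text{and}\quad r+\rho\geqslant\tau-\tfrac18\sqrt\tau .
\end{align*}
Apply Corollary~\ref{Coro-Positive-Front} with, say, $\sigma_-=\frac18$ and $\sigma_+=\frac14$. The front window is then
\begin{align*}
I_\tau:=\bigl[\tau-\tfrac14\sqrt\tau,\ \tau-\tfrac18\sqrt\tau\bigr].
\end{align*}
The two inequalities say exactly that $r-\rho$ lies at or below the left endpoint of $I_\tau$ and $r+\rho$ at or above its right endpoint, so $I_\tau\subset[r-\rho,r+\rho]$.

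\textbf{Estimating on the window.} For $\tau\geqslant T_{\mathrm{fr}}$ the corollary gives $G_1^{(3)}(\tau,\ell)\geqslant c_{\mathrm{fr}}\tau^{-3/2}$ on $I_\tau$. Also $\ell\geqslant\tau/2$ there once $\tau\geqslant1$. Discarding the rest of the integral by positivity,
\begin{align*}
D(\tau,r,\rho)\geqslant 2\pi c_{\mathrm{fr}}\tau^{-\frac32}\cdot\frac{\tau}{2}\cdot\frac{\sqrt\tau}{8}=\frac{\pi c_{\mathrm{fr}}}{8}.
\end{align*}
We then set $c_D:=\pi c_{\mathrm{fr}}/8$ and $T_D:=\max\{T_{\mathrm{fr}},1\}$.

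\textbf{Main point to check.} There is no real obstacle; the one thing to confirm is that the window sits inside $(0,\infty)$ and the inclusion is exact. The endpoint $\tau-\frac14\sqrt\tau$ is positive for $\tau>1/16$, so enlarging $T_D$ handles this. The inclusion holds without slack because the hypotheses fix the constants $\frac14$ and $\frac18$ precisely as the window endpoints. The possibility $r-\rho\leqslant0$ causes no trouble: when $r\geqslant\rho$ the lower limit is $r-\rho\geqslant0$, and the integrand is nonnegative throughout, so only the subinterval matters.
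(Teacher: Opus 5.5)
Your proof is correct and follows the paper's own argument. Both use \eqref{Characteristic-Identities} to turn the hypotheses into the inclusion $[\tau-\frac14\sqrt\tau,\tau-\frac18\sqrt\tau]\subseteq[r-\rho,r+\rho]$, then apply Corollary~\ref{Coro-Positive-Front} (with $\sigma_-=\frac18$, $\sigma_+=\frac14$) inside \eqref{Dirichlet-Kernel-Integral}. The only cosmetic difference is that you bound $\ell\geqslant\tau/2$ on the window, giving the $\tau$-independent constant $\pi c_{\mathrm{fr}}/8$ directly, whereas the paper evaluates $\int\ell\dd\ell$ exactly.
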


\begin{proof}
	By \eqref{Characteristic-Identities}, the first condition implies $r-\rho\leqslant\tau-\frac{1}{4}\sqrt{\tau}$,	whereas the second condition gives $r+\rho\geqslant\tau-\frac{1}{8}\sqrt{\tau}$, namely,
	\begin{align*}
		\left[\tau-\frac{1}{4}\sqrt{\tau},\tau-\frac{1}{8}\sqrt{\tau}\right]\subseteq[r-\rho,r+\rho].
	\end{align*}
	Corollary~\ref{Coro-Positive-Front} and \eqref{Dirichlet-Kernel-Integral} yield
	\begin{align*}
		D(\tau,r,\rho)\geqslant 2\pi c_{\mathrm{fr}}\int_{\tau-\frac{1}{4}\sqrt{\tau}}^{\tau-\frac{1}{8}\sqrt{\tau}}\tau^{-\frac{3}{2}}\ell\dd\ell=\pi c_{\mathrm{fr}}\left(\frac{1}{4}-\frac{3}{64\sqrt{\tau}}\right)\geqslant c_D
	\end{align*}
	for all sufficiently large $\tau$, i.e. $\tau\geqslant T_D$.
\end{proof}

The local ODE argument in the next sections requires only the total mass and the second moment of the one-dimensional kernel.

\begin{lemma}[Mass and second moment]\label{Lemma-Mass-Moment}
	For every $t>0$, one has
	\begin{align}\label{Mass-G1}
		\int_{\mb R}G_1^{(1)}(t,z)\dd z&=t,\\
		\int_{\mb R}z^2G_1^{(1)}(t,z)\dd z&=t^2+\frac{t^3}{3}.\label{Moment-G1}
	\end{align}
\end{lemma}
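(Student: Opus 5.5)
Both identities will be read off from the Fourier multiplier at zero frequency. The one-dimensional kernel has Fourier transform $\widehat S_1(t,\zeta)$ with $\zeta\in\mb R$ and $\rho=|\zeta|$. Formally,
\begin{align*}
	\int_{\mb R}G_1^{(1)}(t,z)\dd z=\widehat S_1(t,0)\quad\text{and}\quad\int_{\mb R}z^2G_1^{(1)}(t,z)\dd z=-\partial_\zeta^2\widehat S_1(t,\zeta)\big|_{\zeta=0}.
\end{align*}
The multiplier is smooth in $\zeta^2$ near the origin, because it solves \eqref{Linear-Fourier-ODE} with coefficients depending on $\zeta^2$. Write $\widehat S_1(t,\zeta)=a(t)+b(t)\zeta^2+O(\zeta^4)$. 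Substituting into \eqref{Linear-Fourier-ODE} with data $(0,1)$ gives $a''=0$, $a(0)=0$, $a'(0)=1$, so $a(t)=t$. At the next order, $b''+a'+a=0$ with $b(0)=b'(0)=0$, so $b''=-1-t$ and $b(t)=-\frac{t^2}{2}-\frac{t^3}{6}$. Hence $\widehat S_1(t,0)=t$ and $-\partial_\zeta^2\widehat S_1(t,0)=-2b(t)=t^2+\frac{t^3}{3}$, which are the claimed values.

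Before using these formal identities, I need $G_1^{(1)}(t,\cdot)$ to be a genuine integrable function with finite second moment. Since $G_1^{(1)}\geqslant0$ by Proposition~\ref{Prop-Dimension-Descent}, it suffices to compute the moments with nonnegative weights and check that they are finite. To avoid any regularity argument in $\zeta$, I will work instead with the time-Laplace transform \eqref{Laplace-G1}. Tonelli's theorem applies because the kernel is nonnegative, and for $s>0$ it gives
\begin{align*}
	\int_{\mb R}\widetilde G_1^{(1)}(s,z)\dd z=\frac{1}{2s\sqrt{1+s}}\cdot\frac{2}{\psi(s)}=\frac{1}{s^2},\qquad
	\int_{\mb R}z^2\widetilde G_1^{(1)}(s,z)\dd z=\frac{1}{2s\sqrt{1+s}}\cdot\frac{4}{\psi(s)^3}=\frac{2(1+s)}{s^4},
\end{align*}
where $\psi(s)=\frac{s}{\sqrt{1+s}}$. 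Note that $\frac{1}{s^2}=\ml L[t](s)$ and $\frac{2}{s^4}+\frac{2}{s^3}=\ml L\bigl[\frac{t^3}{3}+t^2\bigr](s)$.

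The two functions $t\mapsto\int G_1^{(1)}(t,z)\dd z$ and $t\mapsto\int z^2G_1^{(1)}(t,z)\dd z$ are nonnegative and measurable, and by Tonelli their Laplace transforms equal these expressions, which are finite for every $s>0$. Uniqueness of the Laplace transform (Bernstein's theorem, as in Lemma~\ref{Lemma-Kernel-Identification}) then gives \eqref{Mass-G1} and \eqref{Moment-G1} for almost every $t>0$.

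The main obstacle is upgrading from almost every $t$ to every $t>0$. For this I would show that both moment maps are continuous in $t$. The plan is Fatou's lemma in one direction, using the continuity of $G_1^{(1)}(\cdot,z)$ noted before Proposition~\ref{Prop-Dimension-Descent}. For the other direction I would use tightness: the monotone nonincreasing profile in $|z|$ combined with the uniformly bounded second moment on compact time intervals controls the tails. The second moment itself needs a slightly higher moment for tightness; the same Laplace computation with $z^4$ supplies it. Together with the explicit continuous right-hand sides, this yields both identities for every $t>0$.
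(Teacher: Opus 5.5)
Your proof is correct, but it takes a different route from the paper. The paper works at a fixed $t$ on the Fourier side. It reads off the mass as $\widehat G_1^{(1)}(t,0)=t$. For the second moment it writes the symmetric second difference $\frac{2}{h^2}\bigl(\widehat G_1^{(1)}(t,0)-\widehat G_1^{(1)}(t,h)\bigr)$ as $\int_{\mb R} z^2\bigl(\frac{\sin(hz/2)}{hz/2}\bigr)^2G_1^{(1)}(t,z)\dd z$. Fatou's lemma then gives finiteness, and dominated convergence (the squared sinc is at most $1$) gives the exact value $-\partial_\zeta^2\widehat G_1^{(1)}(t,0)$, using the same expansion $t-\bigl(\frac{t^2}{2}+\frac{t^3}{6}\bigr)\zeta^2+O(\zeta^4)$ you derived formally.

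So your first paragraph plus this second-difference device is already a complete proof, with no almost-everywhere step. The ``regularity in $\zeta$'' you wanted to avoid is just the Taylor expansion of an explicit multiplier that is analytic in $\zeta^2$.

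Your Laplace route instead uses the exact $z$-moments of $\mathrm{e}^{-|z|\psi(s)}$ and Tonelli. This gives finiteness and the values for almost every $t$ very cleanly. Your computations $\frac{1}{s^2}$ and $\frac{2(1+s)}{s^4}$ are right, and the $z^4$ analogue $\frac{24(1+s)^2}{s^6}$ does give a polynomial bound on the fourth moment. The price is joint measurability of $G_1^{(1)}$ and the continuity upgrade. Fatou yields the upper bounds at every $t$; the lower bounds need tightness from the next moment plus a uniform bound on $|z|\leqslant L$.

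On that last point, monotonicity in $|z|$ together with the mass bound only gives $G_1^{(1)}(t_n,z)\lesssim |z|^{-1}$, which is not an integrable majorant at the origin. You should state explicitly that $G_1^{(1)}(t_n,0)$ stays bounded for $t_n$ near $t$. This follows from the absolutely convergent Fourier inversion with $|\widehat S_1(t,\rho)|\lesssim\min\{t,\rho^{-2}\}$, which also gives the joint continuity you need for measurability.
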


\begin{proof}
	According to Proposition~\ref{Prop-Dimension-Descent}, $G_1^{(1)}(t,\cdot)$ is a nonnegative, even and integrable kernel. Its Fourier transform solves \eqref{Linear-Fourier-ODE} with initial values $0$ and $1$. Evaluating at $\zeta=0$, we obtain $\widehat G_1^{(1)}(t,0)=t$, which proves \eqref{Mass-G1} obviously.
	
	To justify the second moment without assuming its finiteness in advance, we next use the symmetric second difference for $h\neq0$ as follows:
	\begin{align*}
		\frac{2}{h^2}\left(\widehat G_1^{(1)}(t,0)-\widehat G_1^{(1)}(t,h)\right)&=\int_{\mb R}\frac{2\bigl(1-\cos(hz)\bigr)}{h^2}
		G_1^{(1)}(t,z)\dd z\\
		&=\int_{\mb R}z^2\left(\frac{\sin\left(\frac{hz}{2}\right)}{\frac{hz}{2}}\right)^2G_1^{(1)}(t,z)\dd z.
	\end{align*}
	Near $\zeta=0$, the multiplier satisfies
	\begin{align*}
		\widehat G_1^{(1)}(t,\zeta)=t-\left(\frac{t^2}{2}+\frac{t^3}{6}\right)\zeta^2+O(\zeta^4).
	\end{align*}
	This expansion follows by inserting a power series in $\zeta^2$ into \eqref{Linear-Fourier-ODE}. Fatou's lemma first shows that the second moment is finite. Since the squared sinc factor is bounded by $1$, dominated convergence subsequently yields
	\begin{align*}
		\int_{\mb R}z^2G_1^{(1)}(t,z)\dd z=-\partial_\zeta^2\widehat G_1^{(1)}(t,0)=t^2+\frac{t^3}{3}.
	\end{align*}
	This proves \eqref{Moment-G1}.
\end{proof}

We shall also need rapid decay of the homogeneous solution in every strict interior region. Since its proof requires a separate nonstationary-phase analysis of the low-frequency kernels, together with the treatment of the middle- and high-frequency components, whose computation is elementary but tedious, we defer it to \cref{Appendix-Interior}.

\begin{lemma}[Rapid decay in strict interior regions]\label{Lemma-Interior-Decay}
	Let $0<a<b<1$, and let $u_0,u_1\in\mathcal{C}_0^\infty$. For every $N>0$, there exist $C_N,T_N>0$ such that
	\begin{align}\label{Supp-07}
		\sup_{at\leqslant r\leqslant bt}\left|W_{\lin}(t,r)\right|\leqslant C_Nt^{-N}\ \ \text{for}\ \ t\geqslant T_N.
	\end{align}
\end{lemma}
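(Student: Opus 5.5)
To prove Lemma~\ref{Lemma-Interior-Decay}, I write $W_{\lin}=r\ml M[S_0(t)u_0]+r\ml M[S_1(t)u_1]$ and bound each propagator by frequency region. Using \eqref{Position-Kernel-Decomposition}, the Dirac part $\mathrm{e}^{-t}u_0$ vanishes on $\{at\leqslant r\leqslant bt\}$ once $at>R$, where $\supp u_0,\supp u_1\subset B_R$. The middle-frequency multipliers are exponentially small in $t$ by \eqref{Middle-Matrix}. The regular high-frequency kernels are exponentially small with arbitrary spatial decay by \cref{Coro-High-Spatial}. After convolution with compactly supported data and multiplication by $r\leqslant t$, both contributions are $O(\mathrm{e}^{-ct})$ uniformly, which is better than $t^{-N}$. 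So everything reduces to the low-frequency kernels $G^{\low}_j(t,|x-y|)$ with $|y|\leqslant R$. For $t$ large, $|x-y|\in[a't,b't]$ with $0<a'<a<b<b'<1$, so it suffices to show
\begin{align*}
\sup_{a't\leqslant\ell\leqslant b't}\bigl|G_j^{(3),\low}(t,\ell)\bigr|\lesssim_N t^{-N}.
\end{align*}

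For the low-frequency part I use radial Fourier inversion:
\begin{align*}
G_1^{\low}(t,\ell)=\frac{1}{2\pi^2\ell}\int_0^\infty\chi_{\low}(\rho)\,\mathrm{e}^{-\frac{t\rho^2}{2}}\frac{\sin(t\omega(\rho))}{\omega(\rho)}\rho\sin(\ell\rho)\dd\rho .
\end{align*}
Product-to-sum turns this into a combination of oscillatory integrals $\int_{\mb R}\chi_{\low}(\rho)a(\rho)\mathrm{e}^{-t\rho^2/2}\mathrm{e}^{i(t\omega(\rho)\pm\ell\rho)}\dd\rho$. I extend evenly in $\rho$; the integrand $\rho/\omega(\rho)$ is smooth and even, and the $\cos$ case is analogous. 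The phase is $\phi_\pm(\rho)=t\omega(\rho)\pm\ell\rho$. Its derivative satisfies $|\phi_\pm'(\rho)|=|t\omega'(\rho)\pm\ell|\geqslant t\bigl(\min_{\supp\chi_{\low}}\omega'-b'\bigr)\geqslant ct$. This holds because $\omega'(\rho)=1+O(\rho^2)$ exceeds $b'<1$ on a sufficiently small low-frequency support (shrinking $\rho_0$ if necessary, which is harmless). I then integrate by parts $N$ times with $L=(i\phi')^{-1}\partial_\rho$.

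The main obstacle is that the amplitude $\mathrm{e}^{-t\rho^2/2}$ depends on $t$. Each derivative of it produces factors $t\rho$, so naive integration by parts gains only $t^{-1/2}$ per step. My first attempt is to absorb the Gaussian into the phase: the complex phase $\Phi=i\phi_\pm-t\rho^2/2$ has $|\Phi'|\geqslant|\phi'_\pm|\geqslant ct$. Moreover $\Phi''$ and higher derivatives are $O(t)$ and $\Re\Phi\leqslant0$. So each transpose application of $(\Phi')^{-1}\partial_\rho$ to the $t$-independent amplitude $\chi_{\low}a$ gains a full $t^{-1}$, up to bounded-ratio factors $\Phi^{(k)}/\Phi'$. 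Boundary terms vanish since $\chi_{\low}$ is compactly supported. This gives $O(t^{-N})$ for every $N$, uniformly in $\ell\in[a't,b't]$.

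The same argument applies to $\widehat S_0^{\low}-\mathrm{e}^{-t}\chi_{\low}$, where the subtracted $\mathrm{e}^{-t}$ piece is exponentially small. Together with the prefactor $r\leqslant t$ and the $O(1)$ mass of the data, this yields \eqref{Supp-07}.
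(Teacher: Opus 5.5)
Your proof is correct. Its skeleton matches the paper's proof in Appendix~C:
\begin{itemize}
\item the Dirac part $\mathrm{e}^{-t}u_0$ drops out for $r>R$;
\item the middle-frequency and regular high-frequency kernels are exponentially small;
\item the low-frequency kernels are rewritten, via product-to-sum and parity, as oscillatory integrals with phase $t\omega(\rho)\pm\ell\rho$, after shrinking $\rho_0$ so that $\omega'$ exceeds the interior speed.
\end{itemize}

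The one genuine difference is how you handle the Gaussian factor. The paper rescales $y=\sqrt t\,\rho$ and keeps $\mathrm{e}^{-y^2/2}a(y/\sqrt t)$ as the amplitude. It then uses $|\Psi_\pm'|\gtrsim\sqrt t$ and $\Psi_\pm^{(m)}=O(t^{1-m/2})$, so each integration by parts gains only $t^{-1/2}$. Since the number of integrations is arbitrary, this already yields $t^{-N}$. So the ``obstacle'' you identify is not actually an obstruction.

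Your complex phase $\Phi=i\phi_\pm-t\rho^2/2$ is a valid alternative. You have $|\Phi'|^2=(\phi_\pm')^2+t^2\rho^2\geqslant c^2t^2$, $\Phi^{(k)}=O(t)$ for $k\geqslant2$ uniformly in $\ell$, and $|\mathrm{e}^{\Phi}|\leqslant1$. Hence $(L^T)^Ng$ is a sum of terms $g^{(j)}\prod_{i=1}^m\Phi^{(k_i)}/(\Phi')^{N+m}$, each of size $O(t^{-N})$. This gains a full $t^{-1}$ per step on a $t$-independent amplitude, with no rescaling needed.

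Two smaller points are in your favour:
\begin{itemize}
\item For the middle frequencies you use only the pointwise bound $\lesssim\mathrm{e}^{-ct}$ that follows from compact frequency support. This suffices because the factor $r\leqslant t$ is only polynomial, whereas the paper also invokes spatial decay there.
\item You explicitly remove the exponentially small piece $\chi_{\mathrm{low}}\mathrm{e}^{-t}$ from the low-frequency position multiplier. The paper's formula for $G_0^{(3),\mathrm{low},\mathrm{reg}}$ passes over this piece, which is harmless either way.
\end{itemize}
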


\section{Spherical reduction and the nonlinear lower-bound inequality}\label{Section-Spherical}

It is well-known that spherical means and nonlinear lower-bound iterations are classical for semilinear wave equations, see \cite{John=1979,Agemi-Kurokawa-Takamura=2000,Zhou=2001,Takamura-Wakasa=2011} and the references therein. Although finite propagation speed is lost under the strong damping $-\Delta u_t$, Jensen's inequality preserves the full nonlinearity after spherical averaging, while positivity of the half-line Duhamel kernel yields the required lower bound.

For a function $f=f(x)$ on $\mb R^3$, motivated by the classic three-dimensional radial waves transformation, we introduce the next quantity:
\begin{align*}
	(\ml Rf)(r):=r\ml M[f](r)\ \ \text{for}\ \ 	r>0.
\end{align*}

\begin{lemma}[The radialization map]\label{Lemma-Radialization}
	The map $\ml R$ extends boundedly from $L^2(\mb R^3)$ to $L^2(0,\infty)$ and from $H^2(\mb R^3)$ to $H^2(0,\infty)\cap H_0^1(0,\infty)$ satisfying
	\begin{align}\label{Radialization-L2}
		\|\ml Rf\|_{L^2(0,\infty)}&\leqslant\frac{1}{\sqrt{4\pi}}\|f\|_{L^2(\mb R^3)},\\
        \|\ml Rf\|_{H^2(0,\infty)}&\leqslant C\|f\|_{H^2(\mb R^3)}.\notag
	\end{align}
	Moreover, one has
	\begin{align}\label{Radialization-Laplacian}
		(\ml Rf)''=\ml R(\Delta f)
	\end{align}
	in the sense of distributions on $(0,\infty)$. The odd extension of $\ml Rf$ belongs to $H^2(\mb R)$.
\end{lemma}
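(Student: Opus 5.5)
We first prove everything for $f\in\mathcal C_0^\infty(\mathbb R^3)$ and then extend by density. For smooth $f$, $\mathcal M[f]$ is smooth on $[0,\infty)$ and extends to an even smooth function on $\mathbb R$: indeed $\mathcal M[f](r)=\frac{1}{4\pi}\int_{\mathbb S^2}f(r\omega)\dd\omega$ is invariant under $\omega\mapsto-\omega$, and this formula makes sense for all $r\in\mathbb R$. Hence $\mathcal Rf(r)=r\mathcal M[f](r)$ extends to an odd smooth function, with $\mathcal Rf(0)=0$.

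For the $L^2$ bound \eqref{Radialization-L2}, apply Cauchy--Schwarz on the sphere to get
\begin{align*}
|\mathcal M[f](r)|^2\leqslant\frac{1}{4\pi}\int_{\mathbb S^2}|f(r\omega)|^2\dd\omega.
\end{align*}
Multiplying by $r^2$ and integrating in $r$ gives $\|\mathcal Rf\|_{L^2(0,\infty)}^2\leqslant\frac{1}{4\pi}\|f\|_{L^2}^2$ in polar coordinates.

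For \eqref{Radialization-Laplacian}, we use the classical Darboux identity. Since the Laplacian commutes with rotations, $\mathcal M[\Delta f]=\Delta_{\mathrm{rad}}\mathcal M[f]=\mathcal M[f]''+\frac{2}{r}\mathcal M[f]'$. Combined with $(r g)''=r g''+2g'$, this yields $(\mathcal Rf)''=\mathcal R(\Delta f)$ pointwise for smooth $f$.

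The main issue is the $H^2$ bound, since we must control $(\mathcal Rf)'$ and the behavior near $r=0$. By the $L^2$ bound and the Darboux identity,
\begin{align*}
\|(\mathcal Rf)''\|_{L^2}=\|\mathcal R\Delta f\|_{L^2}\lesssim\|\Delta f\|_{L^2}.
\end{align*}
Interpolation on the half-line, $\|g'\|_{L^2}^2\leqslant\|g\|_{L^2}\|g''\|_{L^2}$ (integrating by parts, with the boundary term $g'g|_{r=0}=0$ because $g(0)=0$), then controls the first derivative. So $\|\mathcal Rf\|_{H^2(0,\infty)}\leqslant C\|f\|_{H^2}$.

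Density of $\mathcal C_0^\infty$ in $L^2$ and in $H^2$ then extends $\mathcal R$ boundedly. Since $\mathcal Rf(0)=0$ for smooth $f$ and the trace at $0$ is continuous on $H^1(0,\infty)$, the limits lie in $H_0^1(0,\infty)$. The identity \eqref{Radialization-Laplacian} passes to distributional limits because both sides depend continuously on $f$: from $H^2$ into $L^2(0,\infty)$, and hence into $\mathcal D'(0,\infty)$.

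Finally, the odd extension of a function $g\in H^2(0,\infty)$ with $g(0)=0$ lies in $H^2(\mathbb R)$. The extension belongs to $H^1$ because it is continuous at $0$. Its derivative is the even extension of $g'$, which lies in $H^1(\mathbb R)$ because even reflection preserves $H^1$ with no boundary condition needed. No delta term arises at the origin, again since $g(0)=0$. For smooth $f$ this is even clearer, because $\mathcal Rf$ is already odd and smooth on $\mathbb R$; the general case follows by density. The only delicate point is the vanishing boundary term in the interpolation step, which is handled by establishing the inequality for smooth $f$, where $\mathcal Rf(0)=0$ holds explicitly.
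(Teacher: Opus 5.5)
Your proposal is correct and follows essentially the same argument as the paper: the spherical Cauchy--Schwarz/Jensen bound for the $L^2$ estimate, the Darboux identity for $(\mathcal Rf)''=\mathcal R(\Delta f)$, the interpolation $\|g'\|_{L^2}^2\leqslant\|g\|_{L^2}\|g''\|_{L^2}$ for smooth compactly supported $f$, and then density. The only difference is cosmetic. The paper runs the interpolation directly on the smooth odd extension over $\mathbb R$ and passes odd extensions to the limit in $H^2(\mathbb R)$, whereas you work on the half-line, kill the boundary term with $g(0)=0$, and then prove the odd reflection lies in $H^2(\mathbb R)$ by a separate argument.
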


\begin{proof}
	Let us consider $f\in\mathcal{C}_0^\infty(\mb R^3)$. By Jensen's inequality and polar coordinates, one derives
	\begin{align*}
		\|\ml Rf\|_{L^2(0,\infty)}^2
		&=\int_0^\infty r^2|\ml M[f](r)|^2\dd r\\
		&\leqslant\frac{1}{4\pi}\int_0^\infty\int_{\mb S^2}|f(r\theta)|^2r^2\dd\theta\dd r=\frac{1}{4\pi}\|f\|_{L^2(\mb R^3)}^2,
	\end{align*}
	which finishes \eqref{Radialization-L2}. The spherical-mean identity $\ml M[\Delta f](r)=\partial_r^2\ml M[f](r)+\frac{2}{r}\partial_r\ml M[f](r)$ gives
	\begin{align*}
		\partial_r^2\bigl(r\ml M[f](r)\bigr)=r\ml M[\Delta f](r)\ \ \text{for}\ \ r>0,
	\end{align*}
	and hence \eqref{Radialization-Laplacian}.
	
	Let $(\ml Rf)_{\mathrm{odd}}$ be the odd extension of $\ml Rf$ to $\mb R$. It is smooth and compactly supported. Applying \eqref{Radialization-L2} to $f$ and $\Delta f$, we estimate
	\begin{align*}
		\|(\ml Rf)_{\mathrm{odd}}\|_{L^2(\mb R)}+\|(\ml Rf)_{\mathrm{odd}}''\|_{L^2(\mb R)}\leqslant C\|f\|_{H^2(\mb R^3)}.
	\end{align*}
	Moreover, integration by parts gives
	\begin{align*}
		\|(\ml Rf)_{\mathrm{odd}}'\|_{L^2(\mb R)}^2\leqslant\|(\ml Rf)_{\mathrm{odd}}\|_{L^2(\mb R)}\|(\ml Rf)_{\mathrm{odd}}''\|_{L^2(\mb R)}\leqslant C^2\|f\|_{H^2(\mb R^3)}^2.
	\end{align*}
	Consequently, the last two estimates yield
	\begin{align*}
		\|(\ml Rf)_{\mathrm{odd}}\|_{H^2(\mb R)}\leqslant C\|f\|_{H^2(\mb R^3)}.
	\end{align*}
	
	The density of $\mathcal{C}_0^\infty(\mb R^3)$ in $H^2(\mb R^3)$ therefore yields the bounded extension of $\ml R$. The corresponding odd extensions converge in $H^2(\mb R)$ and remain odd. Their restrictions have zero trace at the origin and hence belong to $H_0^1(0,\infty)$. Finally, \eqref{Radialization-Laplacian} passes to the limit in the sense of distributions.
\end{proof}

For $r\geqslant0$, one now writes
\begin{align*}
	w(t,r):=\bigl(\ml R[u(t,\cdot)]\bigr)(r)=r\ml M[u(t,\cdot)](r).
\end{align*}
By the assumed solution regularity and the Sobolev–Morrey embedding $H^2(\mb R^3)\hookrightarrow\mathcal{C}(\mb R^3)$, the function $w$ is continuous in $(t,r)$ on compact subsets of the maximal existence interval. Moreover, Lemma~\ref{Lemma-Radialization} shows that the odd extension of $w(t,\cdot)$ belongs to $H^2(\mb R)$ and satisfies $w(t,0)=0$.

By recalling \eqref{Radialization-Laplacian} and applying $\ml{R}$ to the equation for $u$, one may deduce
\begin{align}\label{Supp-02}
	w_{tt}-w_{rr}-w_{rrt}=F(t,r)\ \ \text{for}\ \ r>0,
\end{align}
carrying $F(t,r):=r\ml M[|u(t,\cdot)|^p](r)$. The odd extension of $w$ satisfies the corresponding equation on $\mb R$ without any additional distribution supported at the origin.
\begin{remark}
Let $w_{\mathrm{odd}}$ and $F_{\mathrm{odd}}$ denote the odd extensions of $w$ and $F$, respectively. Since $w_{\mathrm{odd}}(t,\cdot)\in H^2(\mb R)$, its first spatial derivative has no jump at the origin, so no Dirac mass supported at $r=0$ arises under the second spatial derivative or its distributional time derivative. Consequently, the equation
\begin{align*}
	\partial_t^2w_{\mathrm{odd}}-\partial_r^2w_{\mathrm{odd}}-\partial_t\partial_r^2w_{\mathrm{odd}}=F_{\mathrm{odd}}
\end{align*}
holds in $\mathcal{D}'\bigl((0,T)\times\mb R\bigr)$.
\end{remark}
\noindent Finally, Jensen's inequality can deal with the nonlinearity in the next way:
\begin{align*}
	F(t,r)\geqslant r\bigl|\ml M[u](t,r)\bigr|^p=r^{1-p}|w(t,r)|^p\ \ \text{for}\ \ r>0.
\end{align*}

\begin{prop}[Half-line representation for Sobolev solutions]\label{Prop-Half-Line-Representation}
	Let $u$ be a mild Sobolev solution on $[0,T]$ arising from compactly supported smooth initial data, where $0<T<T_{\max}$. Then the solution and the source to \eqref{Supp-02} satisfy
	\begin{align*}
		w&\in\mathcal{C}\bigl([0,T],H^2(0,\infty)\cap H_0^1(0,\infty)\bigr)\cap\mathcal{C}^1\bigl([0,T],L^2(0,\infty)\bigr),\\
		F&\in\mathcal{C}\bigl([0,T],L^2(0,\infty)\bigr).
	\end{align*}
	Moreover, for every $0\leqslant t\leqslant T$ and $r>0$, one has
	\begin{align}\label{Fundamental-Lower}
		w(t,r)\geqslant W_{\lin}(t,r)+\int_0^t\int_0^\infty D(t-s,r,\rho)\rho^{1-p}|w(s,\rho)|^p\dd\rho\dd s.
	\end{align}
\end{prop}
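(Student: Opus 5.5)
We will derive \eqref{Fundamental-Lower} by applying the radialization map $\ml R$ directly to the mild formula \eqref{Mild-Formula}, rather than solving the half-line equation \eqref{Supp-02}. No uniqueness theory on the half-line is then needed.

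\textbf{Regularity.} This step is immediate from Lemma~\ref{Lemma-Radialization}. Since $u\in\ml C([0,T],H^2)\cap\ml C^1([0,T],L^2)$ and $\ml R$ is bounded both $L^2\to L^2(0,\infty)$ and $H^2\to H^2(0,\infty)\cap H_0^1(0,\infty)$, we get the stated regularity of $w$. Because $H^2\hookrightarrow L^\infty\cap L^2$ in three dimensions, the map $u\mapsto|u|^p$ is locally Lipschitz from $H^2$ into $L^2$ for $p>2$. Hence $|u|^p\in\ml C([0,T],L^2)$, and \eqref{Radialization-L2} gives $F=\ml R(|u|^p)\in\ml C([0,T],L^2(0,\infty))$.

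\textbf{Commuting $\ml R$ with the propagator.} The key identity is
\begin{align*}
	\ml R\bigl[S_1(\tau)f\bigr](r)=\int_0^\infty D(\tau,r,\rho)(\ml Rf)(\rho)\dd\rho\ \ \text{for}\ \ r>0,\ \tau>0,
\end{align*}
first for $f\in\ml C_0^\infty(\mb R^3)$.

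We prove it in Fourier variables. The function $v=S_1(\tau)f$ solves the homogeneous strongly damped equation with data $(0,f)$. By \eqref{Radialization-Laplacian}, $\ml Rv$ solves $V_{tt}-V_{rr}-V_{rrt}=0$ on the half-line. The odd extension of $\ml Rv$ lies in $H^2(\mb R)$ and solves the same equation on $\mb R$, with data $(0,(\ml Rf)_{\mathrm{odd}})$.

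Uniqueness for the one-dimensional linear problem follows from the Fourier ODE \eqref{Linear-Fourier-ODE} applied to tempered $L^2$ solutions. Therefore $(\ml R v)_{\mathrm{odd}}=G_1^{(1)}(\tau,\cdot)\ast(\ml Rf)_{\mathrm{odd}}$, and the odd-extension computation preceding \eqref{Dirichlet-Kernel-Integral} gives the identity.

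A more hands-on check is also available. Write $S_1(\tau)f(x)=\int G_1^{(3)}(\tau,|x-y|)f(y)\dd y$, average over the sphere, and use the classical formula
\begin{align*}
	\frac{1}{4\pi}\int_{\mb S^2}g(|r\omega-y|)\dd\omega=\frac{1}{2r|y|}\int_{|r-|y||}^{r+|y|}g(\ell)\ell\dd\ell.
\end{align*}
Combined with \eqref{Dirichlet-Kernel-Integral}, this yields the identity directly.

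The identity extends to $f\in L^2$ by density. Both sides are bounded on $L^2(0,\infty)$, since $D\geqslant0$ and
\begin{align*}
	\int_0^\infty D\dd\rho\leqslant\int_{\mb R}G_1^{(1)}(\tau,z)\dd z=\tau
\end{align*}
by Lemma~\ref{Lemma-Mass-Moment}, so Schur's test applies.

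\textbf{Assembling the representation.} Apply $\ml R$ to \eqref{Mild-Formula}. The homogeneous part is exactly $W_{\lin}$. For the Duhamel part, $\ml R$ is bounded and linear, so it commutes with the Bochner integral in $L^2$. This gives
\begin{align*}
	w(t,r)=W_{\lin}(t,r)+\int_0^t\int_0^\infty D(t-s,r,\rho)F(s,\rho)\dd\rho\dd s,
\end{align*}
initially as an identity in $L^2(0,\infty)$. Both sides are continuous in $r$: $w$ by Sobolev embedding, and the integral by continuity of $D$ together with dominated convergence. Hence the identity holds for every $r>0$.

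Finally, $D\geqslant0$ by \eqref{Dirichlet-Kernel-Integral}, and Jensen gives $F(s,\rho)\geqslant\rho^{1-p}|w(s,\rho)|^p$. Inserting these yields \eqref{Fundamental-Lower}.

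\textbf{Main obstacle.} The main difficulty is justifying the spherical-mean and Dirichlet-kernel identity rigorously for the genuinely non-radial, infinitely supported kernel $G_1^{(3)}$. In particular, we must control the Fubini/Tonelli exchanges near $\rho\to0$ and at small $\tau$, where $G_1^{(3)}$ is only a distribution near the origin.

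I would first try the Fourier/odd-extension route, since it avoids pointwise kernel behavior entirely. I would fall back on truncating the frequency as in Lemma~\ref{Lemma-Kernel-Identification} only if the uniqueness step proves delicate. Nonnegativity of all integrands then makes the passage to the limit through monotone convergence harmless.
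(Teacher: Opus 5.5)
Your proposal is correct and follows essentially the same route as the paper. You apply $\mathcal R$ to the mild formula, intertwine it with the one-dimensional propagator on odd extensions via uniqueness for the linear 1D Cauchy problem, read off $D$ by the method of images, upgrade the $L^2$ identity to a pointwise one by continuity, and finish with Jensen's inequality and $D\geqslant0$; the remaining differences (intertwining only $S_1$ because $W_{\lin}$ is by definition $\mathcal R$ of the homogeneous part, the Schur bound $\int_0^\infty D(\tau,r,\rho)\dd\rho\leqslant\tau$ for the density step, and the optional spherical-mean check) are minor refinements, not a different argument.
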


\begin{proof}
	Lemma~\ref{Lemma-Radialization} and the regularity of the mild solution claim the asserted regularity of $w$. Moreover, the Sobolev embedding $H^2(\mb R^3)\hookrightarrow L^\infty(\mb R^3)$ and the continuity of the power nonlinearity imply that
	\begin{align*}
		|u|^p\in\mathcal{C}\bigl([0,T],L^2(\mb R^3)\bigr)\ \ \Rightarrow\ \ F\in\mathcal{C}\bigl([0,T],L^2(0,\infty)\bigr).
	\end{align*}

	Let $S_j^{(1),D}(t)$ denote the restriction to $(0,\infty)$ of the one-dimensional propagator acting on odd extensions. For $f\in\mathcal{C}_0^\infty(\mb R^3)$, \eqref{Radialization-Laplacian} shows that the odd extension of $\ml R\bigl(S_j(t)f\bigr)$ satisfies
	\begin{align*}
		\ml z_{tt}-\ml z_{rr}-\ml z_{rrt}=0\ \ \text{on}\ \ (0,\infty)\times\mb R
	\end{align*}
	with initial data 
	\begin{align*}
	&(\ml z,\ml z_t)(0,r)=\bigl( (\ml Rf)_{\mathrm{odd}}(r),0\bigr)\ \ \text{for}\ \ j=0,\\
	&(\ml z,\ml z_t)(0,r)=\bigl(0,(\ml Rf)_{\mathrm{odd}}(r)\bigr)\ \ \text{for}\ \ j=1.
	\end{align*}
	 Uniqueness for the one-dimensional linear Cauchy problem therefore yields
	\begin{align}\label{Propagator-Intertwining}
		\ml R\bigl(S_j(t)f\bigr)=S_j^{(1),D}(t)\ml Rf\ \ \text{for}\ \ j\in\{0,1\}.
	\end{align}
	By density, Lemma~\ref{Lemma-Radialization}, and the bounded-time estimates for the linear propagators, \eqref{Propagator-Intertwining} extends to $f\in H^2(\mb R^3)$ for $j=0$ and to $f\in L^2(\mb R^3)$ for $j=1$.

Applying \eqref{Propagator-Intertwining} to \eqref{Mild-Formula}, we obtain the half-line Duhamel formula in $H^2(0,\infty)\cap H_0^1(0,\infty)$. The method of images and \eqref{Dirichlet-Kernel} then give
\begin{align*}
w(t,r)=W_{\lin}(t,r)+\int_0^t\int_0^\infty D(t-s,r,\rho)F(s,\rho)\dd\rho\dd s.
\end{align*}
Since $H^2(0,\infty)\hookrightarrow\mathcal{C}^1\bigl([0,\infty)\bigr)$, this identity holds pointwise. By Jensen's inequality,
\begin{align}\label{Supp-03}
	F(s,\rho)=\rho\ml M|u(s,\cdot)|^p\geqslant\rho^{1-p}|w(s,\rho)|^p.
\end{align}
Moreover, the asserted regularity and $w(s,0)=0$ imply $w(s,\rho)=O(\rho)$ as $\rho\to0^+$, uniformly for $s\in[0,T]$. Hence, recalling \eqref{Supp-03}, $\rho^{1-p}|w(s,\rho)|^p=O(\rho)$ near the origin. The positivity of $D$ now yields \eqref{Fundamental-Lower} to complete our proof.
\end{proof}

By \eqref{Dirichlet-Kernel-Integral}, the kernel $D$ is nonnegative. However, no global sign condition is imposed on $W_{\lin}$. We therefore retain the linear term in \eqref{Fundamental-Lower} until its positivity is established in the relevant region.

In the characteristic variables introduced in \eqref{Characteristic-Variables}, inspired by the operator decomposition $\partial_t^2-\partial_r^2=4\partial_{\alpha}\partial_{\beta}$, we define
\begin{align*}
	\ml W(\alpha,\beta):=w\left(\frac{\alpha+\beta}{2},\frac{\alpha-\beta}{2}\right).
\end{align*}
Under the corresponding change of variables \eqref{Characteristic-Variables}, one has
\begin{align}\label{Characteristic-Jacobian}
	\dd s\dd\rho=\frac{1}{2}\dd\xi\dd\eta.
\end{align}
These coordinates will be used in the subcritical and critical blow-up arguments.

\section{Blow-up in the subcritical range $2<p<p_{\mathrm{crit}}$}\label{Section-Subcritical}

\subsection{Parabolic iteration and interior propagation}

For $B>0$, we define the following parabolic region:
\begin{align*}
	\ml P_B:=\bigl\{(\alpha,\beta):\ \beta\geqslant B\ \ \text{and}\ \ \beta^2\leqslant\alpha\leqslant2\beta^2\bigr\}.
\end{align*}

\begin{prop}[First nonlinear lower bound]\label{Prop-First-Lower}
	Let $\nu_0:=\frac{3p-5}{2}$. There exist $A_0,B_0>0$ such that
	\begin{align}\label{First-Lower}
		\ml W(\alpha,\beta)\geqslant A_0\beta^{-\nu_0}\ \ \text{for}\ \ (\alpha,\beta)\in\ml P_{B_0}.
	\end{align}
\end{prop}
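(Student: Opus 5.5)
The plan is to use the half-line lower bound \eqref{Fundamental-Lower} twice. The first use is purely linear and produces a positive seed on the whole parabolic region. The second use inserts this seed into the Duhamel term, and the positive Dirichlet kernel bound of Lemma~\ref{Lemma-Saturated-Kernel} turns it into the stronger rate $\beta^{-\nu_0}$. Since $2<p<\frac73$ gives $\nu_0=\frac{3p-5}{2}\in(\frac12,1)$, this rate is larger than the linear one.

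\textbf{Step 1: the region is a moving wave-front region, and the linear term is a positive seed.} For $(\alpha,\beta)\in\ml P_B$ we have $t=\frac{\alpha+\beta}{2}\in[\frac{\beta^2}{2},\beta^2+\frac{\beta}{2}]$ and $r=t-\beta$. Writing $\beta=\sigma\sqrt t$ gives $\sigma\in[\frac12,2]$ once $B$ is large. I apply Proposition~\ref{Prop-Linear-Seed} with $\sigma_-=\frac12$ and $\sigma_+=2$. This yields $W_{\lin}\geqslant c_{\lin}t^{-\frac12}\geqslant0$ on $\ml P_B$ for $B$ large, so on $\ml P_B$ the term $W_{\lin}$ can simply be dropped from \eqref{Fundamental-Lower}. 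Since $D\geqslant0$, the same inequality also gives
\begin{align*}
w(s,\rho)\geqslant c\,s^{-\frac12}
\end{align*}
at every source point $(\xi,\eta)\in\ml P_{B}$. For such points $\rho\approx s$, so the source density satisfies
\begin{align*}
\rho^{1-p}|w(s,\rho)|^p\gtrsim s^{1-\frac{3p}{2}}\approx\xi^{1-\frac{3p}{2}}.
\end{align*}

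\textbf{Step 2: choice of the source region and kernel positivity.} For a target point $(\alpha,\beta)\in\ml P_{B_0}$, I restrict the Duhamel integral in \eqref{Fundamental-Lower} to the set
\begin{align*}
\Sigma_{\alpha,\beta}:=\bigl\{(\xi,\eta):\ \sqrt{\beta}\leqslant\eta\leqslant\tfrac{\beta}{2},\ \max\{\eta^2,\beta\}\leqslant\xi\leqslant2\eta^2\bigr\},
\end{align*}
which lies in $\ml P_B$, and then verify the hypotheses of Lemma~\ref{Lemma-Saturated-Kernel} there.
\begin{itemize}
\item The bound $\xi\leqslant\frac{\beta^2}{2}\leqslant\frac{\alpha}{2}$ gives $s<t-T_D$, so $\tau\geqslant T_D$. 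It also gives $\rho\leqslant r$.
\item We have $\xi-\beta\geqslant0$.
\item Since $\sqrt\tau\leqslant\sqrt t\leqslant\beta+1$, we get $\beta-\eta\geqslant\frac{\beta}{2}\geqslant\frac14\sqrt\tau$.
\end{itemize}
Hence $D(t-s,r,\rho)\geqslant c_D$ on $\Sigma_{\alpha,\beta}$. The Jacobian \eqref{Characteristic-Jacobian} then yields
\begin{align*}
\ml W(\alpha,\beta)\gtrsim\int_{\sqrt\beta}^{\beta/2}\int_{\max\{\eta^2,\beta\}}^{2\eta^2}\xi^{1-\frac{3p}{2}}\dd\xi\dd\eta\gtrsim\int_{\sqrt\beta}^{\beta/2}\eta^{4-3p}\dd\eta.
\end{align*}

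\textbf{Step 3: evaluation.} Because $4-3p<-1$, the $\eta$-integral is dominated by its lower endpoint and is comparable to $(\sqrt\beta)^{5-3p}=\beta^{-\nu_0}$. This gives \eqref{First-Lower} with suitable $A_0$ and $B_0$.

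The main obstacle is the bookkeeping in Step 2. All the inequalities $\tau\geqslant T_D$, $\beta-\eta\geqslant\frac14\sqrt\tau$, $\rho\leqslant r$ and $(\xi,\eta)\in\ml P_B$ must hold simultaneously and uniformly in the target point. This is exactly where the lost finite propagation speed must be compensated by the quantitative kernel bound. If the constants turn out too tight, I would first shrink the $\eta$-range to $[2\sqrt\beta,\frac{\beta}{4}]$, which costs only a constant factor.
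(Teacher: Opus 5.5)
Your proof is correct and follows the paper's argument. It uses the linear seed $w(s,\rho)\gtrsim s^{-1/2}$ at source points (Proposition~\ref{Prop-Linear-Seed}), the bound $D\geqslant c_D$ (Lemma~\ref{Lemma-Saturated-Kernel}), the Jacobian \eqref{Characteristic-Jacobian}, and drops $W_{\lin}\geqslant0$ at the target. The only difference is the truncated source region: you use the parabolic strip $\sqrt\beta\leqslant\eta\leqslant\frac{\beta}{2}$, $\eta^2\leqslant\xi\leqslant2\eta^2$, which makes your argument one step of Proposition~\ref{Prop-Parabolic-Iteration} seeded with $\nu=1$ and explains $\nu_0=\frac{p}{2}+\frac{2p-5}{2}$, while the paper uses the thin strip $2\beta\leqslant\xi\leqslant4\beta$, $\sqrt\xi\leqslant\eta\leqslant\frac{6}{5}\sqrt\xi$, and in both the mass comes from source times $s\approx\beta\approx\sqrt t$.
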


\begin{proof}
	Fix $(\alpha,\beta)\in\ml P_B$ and take $B$ sufficiently large. Note that \eqref{Fundamental-Lower} with the aid of \eqref{Characteristic-Jacobian} can be rewritten as
	\begin{align*}
		\ml W(\alpha,\beta)&\geqslant W_{\lin}\left(\frac{\alpha+\beta}{2},\frac{\alpha-\beta}{2}\right)\\
		&\quad+\frac{1}{2}\iint_{\ml D_{\alpha,\beta}}D\left(\frac{\alpha+\beta-\xi-\eta}{2},\frac{\alpha-\beta}{2},\frac{\xi-\eta}{2}\right)\left(\frac{\xi-\eta}{2}\right)^{1-p}|\ml W(\xi,\eta)|^p\dd\eta\dd\xi,
	\end{align*}
	where
	\begin{align*}
		\ml D_{\alpha,\beta}:=\big\{(\xi,\eta):\ 0<\xi+\eta<\alpha+\beta\ \ \text{and}\ \ 0<\xi-\eta\big\}.
	\end{align*}
	 The source region is chosen so that the source points lie in the positive moving-front region of Proposition~\ref{Prop-Linear-Seed}, while the characteristic margins remain comparable to $\sqrt{\tau}$, as required by Lemma~\ref{Lemma-Saturated-Kernel}. More precisely, we retain
	 \begin{align*}
	 	2\beta\leqslant\xi\leqslant4\beta\ \ \text{and}\ \ \sqrt{\xi}\leqslant\eta\leqslant\frac{6}{5}\sqrt{\xi}.
	 \end{align*}
	This region lies in the characteristic integration domain since $\eta<\beta<\xi<\alpha$ for large $\beta$. Moreover,
	\begin{align*}
		s=\frac{\xi+\eta}{2}\approx\xi,\ \ \rho=\frac{\xi-\eta}{2}\approx\xi,\ \ \frac{\eta}{\sqrt{s}}\approx1.
	\end{align*}
	Proposition~\ref{Prop-Linear-Seed} and the positivity of the nonlinear term imply
	\begin{align}\label{Source-Seed}
		w(s,\rho)\geqslant W_{\lin}(s,\rho)\geqslant c_{\lin} s^{-\frac{1}{2}}\approx c\xi^{-\frac{1}{2}}.
	\end{align}
	
	We next verify that the kernel is uniformly positive throughout the retained source region. Due to $(\alpha,\beta)\in\ml P_B$, whereas $\xi\approx\beta$ and $\eta\approx\sqrt{\beta}$, we derive
	\begin{align*}
		\tau=t-s=\frac{\alpha+\beta-\xi-\eta}{2}\approx\alpha\approx\beta^2.
	\end{align*}
	In particular, $\sqrt{\tau}\approx\beta$. Additionally,
	\begin{align*}
		r-\rho=\frac{\alpha-\beta-\xi+\eta}{2}>0
	\end{align*}
	for all sufficiently large $\beta$, and hence $r\geqslant\rho$. Notice that
	\begin{align*}
		\beta-\eta&\geqslant\beta-C\sqrt{\beta}\gtrsim\beta\approx\sqrt{\tau},\\
		\xi-\beta&\geqslant\beta\approx\sqrt{\tau}.
	\end{align*}
	So, all the assumptions of Lemma~\ref{Lemma-Saturated-Kernel} are satisfied, and consequently $D(\tau,r,\rho)\geqslant c_D$. Furthermore, $\frac{\beta}{\sqrt{t}}$ remains in a fixed compact subset of $(0,\infty)$ on $\ml P_B$, and thus Proposition~\ref{Prop-Linear-Seed} implies $W_{\lin}(t,r)\geqslant0$ for sufficiently large $B$.
	
	Using \eqref{Source-Seed} and $\rho\approx\xi$, we obtain
	\begin{align*}
		\ml W(\alpha,\beta)&\gtrsim\int_{2\beta}^{4\beta}\int_{\sqrt{\xi}}^{\frac{6}{5}\sqrt{\xi}}\xi^{1-p}\xi^{-\frac{p}{2}}\dd\eta\dd\xi\\
		&=\frac{2}{5(3p-5)}\left((2\beta)^{-\frac{3p-5}{2}}-(4\beta)^{-\frac{3p-5}{2}}\right)\gtrsim\beta^{\frac{5}{2}-\frac{3p}{2}}.
	\end{align*}
	This proves \eqref{First-Lower}.
\end{proof}

\begin{prop}[Parabolic-region iteration]\label{Prop-Parabolic-Iteration}
	Assume that, for some $A_j,B_j>0$ and $\nu_j\in\mb R$,
	\begin{align}\label{Iteration-Hypothesis}
		\ml W(\xi,\eta)\geqslant A_j\eta^{-\nu_j}
	\end{align}
	whenever
	\begin{align}\label{Source-Parabolic-Region}
		\eta\geqslant B_j\ \ \text{and}\ \ \eta^2\leqslant\xi\leqslant2\eta^2.
	\end{align}
	Put $q_j:=p\nu_j+2p-5$. Then there exist $A_{j+1},B_{j+1}>0$ such that
	\begin{align*}
		\ml W(\alpha,\beta)\geqslant A_{j+1}\times
		\begin{cases}
			\beta^{-\frac{q_j}{2}}&\text{if}\ \  q_j>0,\\
			\log\beta&\text{if}\ \  q_j=0,\\
			\beta^{-q_j}&\text{if}\ \  q_j<0,
		\end{cases}
	\end{align*}
	for $(\alpha,\beta)\in\ml P_{B_{j+1}}$. In particular, if $q_j>0$, then \eqref{Iteration-Hypothesis} holds at the next step with $\nu_{j+1}:=\frac{q_j}{2}$.
\end{prop}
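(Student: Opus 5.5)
We follow the proof of Proposition~\ref{Prop-First-Lower}, starting from the characteristic form of \eqref{Fundamental-Lower} with the Jacobian \eqref{Characteristic-Jacobian}. Fix $(\alpha,\beta)\in\ml P_B$ with $B$ large. As shown there, $\beta/\sqrt{t}$ then lies in a fixed compact subset of $(0,\infty)$, so Proposition~\ref{Prop-Linear-Seed} gives $W_{\lin}(t,r)\geqslant0$ and the linear term can be dropped. We keep only source points $(\xi,\eta)$ in the source parabolic region \eqref{Source-Parabolic-Region}, restricted to
\begin{align*}
	B_j\leqslant\eta\leqslant\frac{\beta}{2}\ \ \text{and}\ \ \eta^2\leqslant\xi\leqslant2\eta^2.
\end{align*}
On this set the hypothesis \eqref{Iteration-Hypothesis} applies, and $\rho=\frac{\xi-\eta}{2}\approx\eta^2$, so $\rho^{1-p}\approx\eta^{2-2p}$.

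The next step is to show that Lemma~\ref{Lemma-Saturated-Kernel} applies on this retained region. We have $\tau=\frac{\alpha+\beta-\xi-\eta}{2}$. Since $\xi\leqslant2\eta^2\leqslant\frac{\beta^2}{2}$ and $\alpha\geqslant\beta^2$, we get $\tau\approx\alpha\approx\beta^2$, hence $\sqrt{\tau}\approx\beta$. Two margin conditions must be checked. First, $\beta-\eta\geqslant\frac{\beta}{2}$, and we need this to be $\geqslant\frac14\sqrt{\tau}$. Since $\alpha\leqslant2\beta^2$, we have $\tau\leqslant\frac{2\beta^2+\beta}{2}$, so $\frac14\sqrt{\tau}\leqslant\frac{\beta}{4}(1+o(1))<\frac{\beta}{2}$. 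Second, $\xi-\beta\geqslant-\beta\geqslant-\frac18\sqrt{\tau}$ fails, because $\xi$ may be smaller than $\beta$ when $\eta$ is small. This is the main obstacle: the region of small $\eta$ near $B_j$ has $\xi\approx\eta^2\ll\beta$.

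To handle it, I would shrink the retained region to $\xi\geqslant\beta$, i.e.\ $\eta\geqslant\sqrt{\beta/2}$, so that $\xi-\beta\geqslant0$. Then $r-\rho=\frac{\alpha-\beta-\xi+\eta}{2}>0$ because $\xi\leqslant\frac{\beta^2}{2}<\alpha-\beta$. With these restrictions Lemma~\ref{Lemma-Saturated-Kernel} gives $D\geqslant c_D$ uniformly, and therefore
\begin{align*}
	\ml W(\alpha,\beta)\gtrsim\int_{\sqrt{\beta/2}}^{\beta/2}\int_{\eta^2}^{2\eta^2}\eta^{2-2p}\eta^{-p\nu_j}\dd\xi\dd\eta\approx\int_{\sqrt{\beta/2}}^{\beta/2}\eta^{-(q_j+1)}\dd\eta,
\end{align*}
because $2+2-2p-p\nu_j=-(q_j+1)+2-2+\dots$; precisely, $\eta^{2}\cdot\eta^{2-2p-p\nu_j}=\eta^{4-2p-p\nu_j}=\eta^{-(q_j+1)}$.

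The remaining step is the three-case evaluation of this integral. If $q_j>0$, the lower endpoint dominates and the integral is $\approx\beta^{-q_j/2}$. If $q_j=0$, it equals $\log\frac{\sqrt{\beta}}{\sqrt2}\approx\log\beta$. If $q_j<0$, the upper endpoint dominates and the integral is $\approx\beta^{-q_j}$. Choosing $B_{j+1}$ large enough that all the conditions above hold, including $\sqrt{\beta/2}\geqslant B_j$, $\tau\geqslant T_D$, and the positivity of $W_{\lin}$, gives the claim. When $q_j>0$, the bound on $\ml P_{B_{j+1}}$ is exactly \eqref{Iteration-Hypothesis} with $\nu_{j+1}=\frac{q_j}{2}$, since $\ml P_{B_{j+1}}$ coincides with \eqref{Source-Parabolic-Region}.
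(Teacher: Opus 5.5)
Your argument follows the paper's proof. You discard $W_{\lin}$ via Proposition~\ref{Prop-Linear-Seed}, keep a strip of the source parabola with $\sqrt{\beta}\lesssim\eta\lesssim\beta$, obtain $D\geqslant c_D$ from Lemma~\ref{Lemma-Saturated-Kernel}, and evaluate $\int\eta^{-(q_j+1)}\dd\eta$ in three cases. The paper keeps $4\sqrt{\beta}\leqslant\eta\leqslant\frac{\beta}{8}$ and $\eta^2\leqslant\xi\leqslant\frac54\eta^2$. Your checks of $\tau\approx\beta^2$, of $r\geqslant\rho$ and of $\beta-\eta\geqslant\frac14\sqrt{\tau}$ are correct, and so are the exponent count $\eta^2\cdot\eta^{2-2p-p\nu_j}=\eta^{-(q_j+1)}$ and the three-case evaluation.

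One step is wrong as written. The restriction $\eta\geqslant\sqrt{\beta/2}$ does not give $\xi\geqslant\beta$, because you still let $\xi$ range over $[\eta^2,2\eta^2]$, and $\eta^2$ can be as small as $\frac{\beta}{2}$. Since $\tau<\frac{\alpha+\beta}{2}\leqslant\beta^2+\frac{\beta}{2}$, you have $\frac18\sqrt{\tau}<\frac{\beta+1}{8}\leqslant\frac{\beta}{4}$. So wherever $\xi<\frac34\beta$ (for example at $\eta=\sqrt{\beta/2}$, $\xi=\frac{\beta}{2}$), you get $\xi-\beta<-\frac{\beta}{4}<-\frac18\sqrt{\tau}$. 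There the second hypothesis of Lemma~\ref{Lemma-Saturated-Kernel} fails, so your claim that $D\geqslant c_D$ uniformly on the retained region is unjustified.

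The fix is immediate: impose $\eta\geqslant\sqrt{\beta}$, so that $\xi\geqslant\eta^2\geqslant\beta$, or $\eta\geqslant4\sqrt{\beta}$ as in the paper. This changes only constants in $\int\eta^{-(q_j+1)}\dd\eta$, so all three cases and the final conclusion, including the identification $\nu_{j+1}=\frac{q_j}{2}$, go through unchanged.
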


\begin{proof}
	Let us fix $(\alpha,\beta)\in\ml P_B$. The source region is chosen so that it lies inside the parabolic region where the induction hypothesis is valid, while retaining uniform positive margins for the Dirichlet kernel. More precisely, we now retain
	\begin{align}\label{Iteration-Source}
		4\sqrt{\beta}\leqslant\eta\leqslant\frac{\beta}{8}\ \ \text{and}\ \ \eta^2\leqslant\xi\leqslant\frac{5}{4}\eta^2.
	\end{align}
	Taking $B$ sufficiently large, this region is contained in \eqref{Source-Parabolic-Region}. Moreover, since $\alpha\approx\beta^2$, $\eta\leqslant\frac{\beta}{8}$ and $\xi\leqslant\frac{5\beta^2}{256}$, we have $\tau\approx\beta^2$.
	The characteristic margins satisfy
	\begin{align*}
		\beta-\eta&\gtrsim\beta\approx\sqrt{\tau},\\
		\xi-\beta&\geqslant\eta^2-\beta\gtrsim\beta,
	\end{align*}
	while $r-\rho\gtrsim\beta^2$. Hence, Lemma~\ref{Lemma-Saturated-Kernel} concludes $D(\tau,r,\rho)\geqslant c_D$. As in the proof of Proposition~\ref{Prop-First-Lower}, the homogeneous term is nonnegative for all sufficiently large $B$.
	
	On \eqref{Iteration-Source}, we have $\rho\approx\eta^2$. Therefore,
	\begin{align*}
		\ml W(\alpha,\beta)&\gtrsim A_j^p\int_{4\sqrt{\beta}}^{\frac{\beta}{8}}\int_{\eta^2}^{\frac{5}{4}\eta^2}\eta^{2(1-p)}\eta^{-p\nu_j}\dd\xi\dd\eta\\
		&\gtrsim A_j^p\int_{4\sqrt{\beta}}^{\frac{\beta}{8}}\eta^{-q_j-1}\dd\eta\gtrsim A_j^p\times \begin{cases}
			\beta^{-\frac{q_j}{2}}&\text{if}\ \  q_j>0,\\
			\log\beta&\text{if}\ \  q_j=0,\\
			\beta^{-q_j}&\text{if}\ \  q_j<0,
		\end{cases}
	\end{align*}
	for all sufficiently large $\beta$. The final conclusion follows with $A_{j+1}:=c_jA_j^p$ after increasing $B_{j+1}$ if necessary.
\end{proof}

Starting from $\nu_0:=\frac{3p-5}{2}$, we apply Proposition~\ref{Prop-Parabolic-Iteration} successively as long as $q_j>0$. Then
\begin{align*}
	\nu_{j+1}=\frac{p}{2}\nu_j+\frac{2p-5}{2}\ \ \Rightarrow\ \ \nu_j=\frac{p(3p-7)}{2(p-2)}\left(\frac{p}{2}\right)^j-\frac{2p-5}{p-2}.
\end{align*}
Since $2<p<\frac{7}{3}$, we have $\nu_j\to-\infty$, and therefore $q_j\leqslant0$ after finitely many steps. At the first such step, Proposition~\ref{Prop-Parabolic-Iteration} yields a lower bound for $\ml W(\alpha,\beta)$ that grows logarithmically in $\beta$ when $q_j=0$ and algebraically when $q_j<0$. Then by choosing $B_*>0$ sufficiently large, there exist $A_*>0$ such that
\begin{align}\label{Parabolic-Positive}
	\ml W(\alpha,\beta)\geqslant A_*\ \ \text{for}\ \ (\alpha,\beta)\in\ml P_{B_*}.
\end{align}
\begin{remark}
At the critical power $p=\frac{7}{3}$, this finite-step improvement disappears. Indeed, $\nu_j\equiv1$ and $q_j\equiv2$, so the preceding power iteration never yields a nondecaying lower bound. This is why a refined logarithmic iteration is required in the critical case.
\end{remark}

Having obtained a nondecaying lower bound for $\ml W$ in the parabolic region, we now return to the original $(t,r)$-variables and propagate this bound to a strict interior region, where it yields a time-growing lower bound for $w$.

\begin{prop}[Interior growth]\label{Prop-Interior-Growth}
	Let $\mu_p:=\frac{5}{2}-p>0$. There exist $T_{\mathrm{int}},C_{\mathrm{int}}>0$ such that
	\begin{align}\label{Interior-Growth}
		w(t,r)\geqslant C_{\mathrm{int}}t^{\mu_p}
	\end{align}
	for $t\geqslant T_{\mathrm{int}}$ and $\frac{2}{5}t\leqslant r\leqslant\frac{3}{4}t$
\end{prop}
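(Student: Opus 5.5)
The plan is to evaluate the lower-bound inequality \eqref{Fundamental-Lower} at a target point $(t,r)$ with $\frac{2}{5}t\leqslant r\leqslant\frac{3}{4}t$. I will discard everything in the Duhamel integral except a source region lying inside the parabolic region $\ml P_{B_*}$, where \eqref{Parabolic-Positive} gives $\ml W\geqslant A_*$. The linear term is handled by Lemma~\ref{Lemma-Interior-Decay} with $a=\frac{2}{5}$ and $b=\frac34$. It gives $|W_{\lin}(t,r)|\leqslant C_Nt^{-N}$, so for large $t$ it is negligible against the claimed growth $t^{\mu_p}$; recall that $\mu_p=\frac52-p>0$ because $p<\frac73$. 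It therefore suffices to show that the nonlinear integral is $\gtrsim t^{\frac52-p}$.

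For the source region, in characteristic coordinates I will retain
\begin{align*}
	0.64\,t\leqslant\xi\leqslant0.76\,t\ \ \text{and}\ \ \sqrt{\xi/2}\leqslant\eta\leqslant\sqrt{\xi}.
\end{align*}
For large $t$ this lies in $\ml P_{B_*}$, because $\eta\gtrsim\sqrt t\geqslant B_*$ and $\eta^2\leqslant\xi\leqslant2\eta^2$. In these points $s=\frac{\xi+\eta}{2}$ and $\rho=\frac{\xi-\eta}{2}$ both lie in $[0.32t,0.38t]$ up to $O(\sqrt t)$ corrections. Consequently $\rho<r$, $\tau=t-s\approx t$ and $\rho^{1-p}\approx t^{1-p}$. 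The region has $\xi$-length $\approx t$ and $\eta$-length $\approx\sqrt t$, so its area is $\approx t^{3/2}$. Using \eqref{Characteristic-Jacobian}, the integral over it is bounded below by
\begin{align*}
	\gtrsim A_*^p\,t^{1-p}\,t^{\frac32}\inf D=A_*^p\,t^{\frac52-p}\inf D,
\end{align*}
where the infimum is over the retained region.

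The main step is to verify that $D(\tau,r,\rho)\geqslant c_D$ on this region, via Lemma~\ref{Lemma-Saturated-Kernel}. We have $\tau\geqslant T_D$ for large $t$ and $r\geqslant\rho$ as noted. The first margin is
\begin{align*}
	\beta-\eta=t-r-\eta\geqslant\tfrac{t}{4}-\sqrt t\gtrsim t\gtrsim\sqrt\tau.
\end{align*}
The second margin is
\begin{align*}
	\xi-\beta=\xi-t+r\geqslant0.64t-t+0.4t=0.04\,t\geqslant-\tfrac18\sqrt\tau.
\end{align*}
Both hypotheses of the lemma therefore hold, uniformly in the region and in $r\in[\frac25t,\frac34t]$. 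This is exactly where the window for $\xi$ had to be tuned. The source must be far enough out, with $\xi$ close to $2s$ and $s\gtrsim0.3t$, so that $[r-\rho,r+\rho]$ reaches the front shell $[\tau-\frac14\sqrt\tau,\tau-\frac18\sqrt\tau]$. At the same time it must stay close enough that $\rho<r$ and the region is still inside $\ml P_{B_*}$.

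I expect this kernel-margin bookkeeping to be the only delicate point. The lower edge $r=\frac25t$ forces $\xi\gtrsim0.6t$, and the upper edge $r=\frac34t$ needs $t-r-\eta\gtrsim\sqrt t$; the choice above satisfies both with room to spare. Combining the three pieces gives
\begin{align*}
	w(t,r)\geqslant c\,A_*^p\,t^{\mu_p}-C_Nt^{-N}\geqslant C_{\mathrm{int}}\,t^{\mu_p}
\end{align*}
for $t\geqslant T_{\mathrm{int}}$, which is \eqref{Interior-Growth}.
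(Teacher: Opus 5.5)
Your proposal is correct and follows the paper's proof essentially step for step. You restrict \eqref{Fundamental-Lower} to a source window in $\ml P_{B_*}$ with $\xi\approx t$ and $\eta\approx\sqrt t$, check the hypotheses of Lemma~\ref{Lemma-Saturated-Kernel}, integrate over an area $\approx t^{3/2}$ with $\rho^{1-p}\gtrsim t^{1-p}$, and absorb $W_{\lin}$ using Lemma~\ref{Lemma-Interior-Decay}. The only difference is the numerical window: you take $0.64t\leqslant\xi\leqslant0.76t$ and $\eta^2\in[\xi/2,\xi]$, whereas the paper takes $\frac34t\leqslant\xi\leqslant\frac45t$ and $\eta^2\in[\frac35\xi,\frac45\xi]$; your choice satisfies the same margins.
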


\begin{proof}
	Fix $t$ sufficiently large and $\frac{2}{5}t\leqslant r\leqslant\frac{3}{4}t$. In \eqref{Fundamental-Lower}, let us restrict the integration to
	\begin{align}\label{Supp-04}
		\Omega_{\mathrm{sub}}(t):=\left\{(\xi,\eta): \ \frac{3}{4}t\leqslant\xi\leqslant\frac{4}{5}t\ \ \text{and}\ \ \sqrt{\frac{3\xi}{5}}\leqslant\eta\leqslant\sqrt{\frac{4\xi}{5}}\right\},
	\end{align}
	whose reason and motivation are similar to those in the last two propositions.
	On this region, one knows
	\begin{align*}
		\frac{5}{4}\eta^2\leqslant\xi\leqslant\frac{5}{3}\eta^2\ \ \text{and}\ \ \eta\approx\sqrt{t}.
	\end{align*}
	Hence, for sufficiently large $t$, the source point belongs to $\ml P_{B_*}$ and \eqref{Parabolic-Positive} gives $\ml W(\xi,\eta)\geqslant A_*$.
	
	Let us take $\beta:=t-r$. Then $\frac{1}{4}t\leqslant\beta\leqslant\frac{3}{5}t$. Moreover, throughout the source region,
	\begin{align*}
		s=\frac{\xi+\eta}{2}\approx t,\ \ \rho=\frac{\xi-\eta}{2}\approx t\ \ \text{and}\ \ \tau=t-s\approx t.
	\end{align*}
	Since $\rho\leqslant\frac{2}{5}t\leqslant r$, we have $r\geqslant\rho$. Furthermore, $\beta-\eta\gtrsim t$ and $\xi-\beta\geqslant\frac{3}{20}t$. Lemma~\ref{Lemma-Saturated-Kernel} therefore yields $D(t-s,r,\rho)\geqslant c_D$.
	
	The $\xi$-interval has length comparable to $t$, while the $\eta$-interval has length comparable to $\sqrt{t}$. Since $\rho\approx t$, \eqref{Fundamental-Lower} and \eqref{Characteristic-Jacobian} imply
	\begin{align*}
		w(t,r)&\geqslant W_{\lin}(t,r)+cA_*^p\int_{\frac{3}{4}t}^{\frac{4}{5}t}\int_{\sqrt{\frac{3\xi}{5}}}^{\sqrt{\frac{4\xi}{5}}}\rho^{1-p}\dd\eta\dd\xi\\
		&\geqslant W_{\lin}(t,r)+ct^{\frac{5}{2}-p}.
	\end{align*}
	By Lemma~\ref{Lemma-Interior-Decay}, we conclude
	\begin{align*}
		\sup_{\frac{2}{5}t\leqslant r\leqslant\frac{3}{4}t}|W_{\lin}(t,r)|\leqslant C_Nt^{-N}
	\end{align*}
	for every $N>0$. Recalling $\frac{5}{2}-p>0$, the linear term can be absorbed for sufficiently large $t$, which proves \eqref{Interior-Growth}.
\end{proof}

\subsection{A short-time kernel mass bound}

The final iteration requires the following lower bound over short time intervals. For $T>0$, set
\begin{align*}
	J_T:=\left[\frac{T}{2},\frac{2T}{3}\right].
\end{align*}

\begin{lemma}[Short-time kernel mass]\label{Lemma-Local-Kernel-Mass}
	There exist $\kappa_0\in\left(0,\frac{1}{100}\right)$ and $T_0,c_0>0$ such that
	\begin{align}\label{Local-Kernel-Mass}
		\inf_{r\in J_T}\int_{J_T}D(\tau,r,\rho)\dd\rho\geqslant c_0\tau
	\end{align}
	whenever $T\geqslant T_0$ and $0<\tau\leqslant\kappa_0T$.
\end{lemma}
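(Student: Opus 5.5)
The plan is to work directly with the definition \eqref{Dirichlet-Kernel} and use only the mass and second-moment identities of Lemma~\ref{Lemma-Mass-Moment}, together with the evenness and nonnegativity of $G_1^{(1)}(\tau,\cdot)$ from Proposition~\ref{Prop-Dimension-Descent}. The front asymptotics are not needed here, because we are in the short-time regime $\tau\ll T$. Fix $r\in J_T$. I would split
\begin{align*}
	\int_{J_T}D(\tau,r,\rho)\dd\rho=\int_{r-\frac{2T}{3}}^{r-\frac{T}{2}}G_1^{(1)}(\tau,z)\dd z-\int_{r+\frac{T}{2}}^{r+\frac{2T}{3}}G_1^{(1)}(\tau,z)\dd z=:I_1-I_2,
\end{align*}
after the substitutions $z=r-\rho$ and $z=r+\rho$, respectively. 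I would bound $I_1$ from below and $I_2$ from above by a small multiple of $\tau$.

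\emph{Lower bound for $I_1$.} Since $r\in J_T$, the interval $[r-\frac{2T}{3},r-\frac{T}{2}]$ has length $\frac{T}{6}$ and contains $0$. Hence it contains either $[-\frac{T}{12},0]$ or $[0,\frac{T}{12}]$. By nonnegativity and evenness of $G_1^{(1)}(\tau,\cdot)$, in either case
\begin{align*}
	I_1\geqslant\int_0^{\frac{T}{12}}G_1^{(1)}(\tau,z)\dd z=\frac12\int_{|z|\leqslant\frac{T}{12}}G_1^{(1)}(\tau,z)\dd z=\frac12\left(\tau-\int_{|z|>\frac{T}{12}}G_1^{(1)}(\tau,z)\dd z\right),
\end{align*}
where the last step uses \eqref{Mass-G1}. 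The tail is controlled by Chebyshev's inequality together with \eqref{Moment-G1}:
\begin{align*}
	\int_{|z|>L}G_1^{(1)}(\tau,z)\dd z\leqslant\frac{\tau^2+\frac{\tau^3}{3}}{L^2}.
\end{align*}
With $L=\frac{T}{12}$ and $0<\tau\leqslant\kappa_0T$, this quantity is at most $144\,\tau\bigl(\frac{\kappa_0}{T}+\frac{\kappa_0^2}{3}\bigr)$. This is at most $\frac{\tau}{4}$ once $\kappa_0$ is small and $T\geqslant T_0\geqslant1$.

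\emph{Upper bound for $I_2$.} The integration range of $I_2$ lies in $\{z\geqslant r+\frac{T}{2}\}\subset\{|z|\geqslant T\}$. The same Chebyshev bound with $L=T$ gives $I_2\leqslant\tau\bigl(\frac{\kappa_0}{T}+\frac{\kappa_0^2}{3}\bigr)\leqslant\frac{\tau}{16}$.

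\emph{Conclusion.} Combining the two bounds gives $\int_{J_T}D\dd\rho\geqslant\frac{\tau}{2}\bigl(1-\frac14\bigr)-\frac{\tau}{16}\geqslant\frac{\tau}{4}$, uniformly in $r\in J_T$. This yields \eqref{Local-Kernel-Mass} with $c_0=\frac14$ and some $\kappa_0\in(0,\frac{1}{100})$. Alternatively, one can simply note that $D\geqslant0$ by \eqref{Dirichlet-Kernel-Integral}, but the explicit bound on $I_2$ is needed to keep the lower bound of order $\tau$.

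The only delicate point is the placement of $0$ inside the $z$-window when $r$ sits at an endpoint of $J_T$. That is handled by the evenness and half-window argument above, which loses only a factor $\frac12$. Everything else reduces to the moment identities, so I expect no real obstacle beyond choosing $\kappa_0$ and $T_0$ consistently.
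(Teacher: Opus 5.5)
Your proof is correct and follows the paper's argument essentially step for step: you split $D$ into the $r-\rho$ and $r+\rho$ pieces, use the mass identity \eqref{Mass-G1} for the main term and Chebyshev with the second moment \eqref{Moment-G1} for both tails, and end with $c_0=\frac14$. The only deviation is the window bound: you lower-bound it by a half-window of length $\frac{T}{12}$ using evenness and nonnegativity alone, whereas the paper uses the radial monotonicity of $G_1^{(1)}(\tau,\cdot)$ from Proposition~\ref{Prop-Dimension-Descent} to keep the full length $L_T=\frac{T}{6}$, which only changes the numerical constant in the tail estimate.
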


\begin{proof}
	We may set the length $L_T:=|J_T|=\frac{T}{6}$. Since $G_1^{(1)}(\tau,\cdot)$ is nonnegative, even and nonincreasing on $(0,\infty)$, for every $r\in J_T$ we have
	\begin{align*}
		\int_{J_T}G_1^{(1)}(\tau,r-\rho)\dd\rho&=\int_{r-\frac{2T}{3}}^{r-\frac{T}{2}}G_1^{(1)}(\tau,z)\dd z\\
		&\geqslant\int_0^{L_T}G_1^{(1)}(\tau,z)\dd z=\frac{1}{2}\left(\tau-\int_{|z|>L_T}G_1^{(1)}(\tau,z)\dd z\right),
	\end{align*}
	in which we used $\int_{\mb R}G_1^{(1)}(\tau,z)\dd z=\tau$.
Recalling again $G_1^{(1)}(\tau,\cdot)$ is nonnegative, Chebyshev's inequality and \eqref{Moment-G1} give
\begin{align*}
	\int_{|z|>L_T}G_1^{(1)}(\tau,z)\dd z\leqslant\frac{1}{L_T^2}\int_{\mb R}z^2G_1^{(1)}(\tau,z)\dd z=\frac{\tau^2+\frac{\tau^3}{3}}{L_T^2},
\end{align*}
which indicates
\begin{align*}
	\int_{J_T}G_1^{(1)}(\tau,r-\rho)\dd\rho\geqslant\frac{\tau}{2}-\frac{\tau^2+\frac{\tau^3}{3}}{2L_T^2}.
\end{align*}
	On the other hand, $r+\rho\geqslant T$ for $r,\rho\in J_T$, and hence
	\begin{align*}
		\int_{J_T}G_1^{(1)}(\tau,r+\rho)\dd\rho\leqslant\int_T^\infty G_1^{(1)}(\tau,z)\dd z\leqslant\frac{\tau^2+\frac{\tau^3}{3}}{2T^2}.
	\end{align*}
	It follows from \eqref{Dirichlet-Kernel} and $L_T=\frac{T}{6}$ that
	\begin{align*}
		\int_{J_T}D(\tau,r,\rho)\dd\rho\geqslant\frac{\tau}{2}-C\frac{\tau^2+\tau^3}{T^2}.
	\end{align*}
	If $0<\tau\leqslant\kappa_0T$, then $\frac{\tau^2+\tau^3}{T^2}\leqslant\left(\frac{\kappa_0}{T}+\kappa_0^2\right)\tau$. Choosing $\kappa_0>0$ sufficiently small and then $T_0>0$ sufficiently large, we obtain
	\begin{align*}
		\int_{J_T}D(\tau,r,\rho)\dd\rho\geqslant\frac{\tau}{4}
	\end{align*}
	uniformly for $r\in J_T$, $T\geqslant T_0$ and $0<\tau\leqslant\kappa_0T$. This proves \eqref{Local-Kernel-Mass} with $c_0=\frac{1}{4}$.
\end{proof}

\subsection{Completion of the subcritical proof}

\begin{proof}[Proof of Theorem~\ref{Thm-Blowup} for $2<p<\frac{7}{3}$]
	Assume, to the contrary, that the maximal mild Sobolev solution is global in time. Let $\kappa_0$ be given by Lemma~\ref{Lemma-Local-Kernel-Mass}. For $T$ sufficiently large, set
	\begin{align}\label{Supp-05}
		\ml Q_T:=[T,(1+\kappa_0)T]\times J_T.
	\end{align}
	Since $\kappa_0<\frac{1}{100}$, every $(t,r)\in\ml Q_T$ satisfies $\frac{2}{5}t\leqslant r\leqslant\frac{3}{4}t$. Hence, Proposition~\ref{Prop-Interior-Growth} addresses
	\begin{align}\label{Subcritical-Baseline}
		w(t,r)\geqslant A_T:=C_{\mathrm{int}}T^{\mu_p}\ \ \text{for}\ \ (t,r)\in\ml Q_T.
	\end{align}
	
	For $(t,r)\in\ml Q_T$, one can consider the source region defined in characteristic variables by \eqref{Supp-04}. Throughout this region,
	\begin{align*}
		s=\frac{\xi+\eta}{2}\leqslant\frac{2}{5}t+C\sqrt{t}<\frac{T}{2}
	\end{align*}
	for all sufficiently large $T$. It is therefore disjoint from $[T,t]\times J_T$. Repeating the estimate in the proof of Proposition~\ref{Prop-Interior-Growth}, we immediately arrive at
	\begin{align}\label{Subcritical-Early-Contribution}
		W_{\lin}(t,r)+\iint_{\Omega_{\mathrm{sub}}(t)}D(t-s,r,\rho)\rho^{1-p}|w(s,\rho)|^p\dd\rho\dd s\geqslant A_T,
	\end{align}
	where the time-dependent domain $\Omega_{\mathrm{sub}}(t)$ defined in \eqref{Supp-04} is equivalent to 
	\begin{align*}
		\Omega_{\mathrm{sub}}(t)=\left\{(s,\rho):\ \frac{3}{4}t\leqslant s+\rho\leqslant\frac{4}{5}t\ \ \text{and}\ \  \sqrt{\frac{3(s+\rho)}{5}}\leqslant s-\rho\leqslant\sqrt{\frac{4(s+\rho)}{5}}\right\}.
	\end{align*}

Let us now define
\begin{align*}
	h_T(t):=\inf_{r\in J_T}w(t,r)\ \ \text{for}\ \ T\leqslant t\leqslant(1+\kappa_0)T.
\end{align*}
The joint continuity of $w$ and the compactness of $J_T$ imply that $h_T\in\mathcal{C}\bigl([T,(1+\kappa_0)T]\bigr)$. Moreover, \eqref{Subcritical-Baseline} gives 
\begin{align*}
h_T(t)\geqslant A_T
\end{align*}
on this interval.

For $(t,r)\in\ml Q_T$, we retain, in addition to the early contribution in \eqref{Subcritical-Early-Contribution}, the region $T\leqslant s\leqslant t$ and $\rho\in J_T$. Thanks to the half-line Dirichlet kernel $D(t-s,r,\rho)$ being nonnegative and $\rho^{1-p}\gtrsim T^{1-p}$ on $J_T$, \eqref{Fundamental-Lower} yields
\begin{align*}
	w(t,r)\geqslant A_T+cT^{1-p}\int_T^t [h_T(s)]^p\int_{J_T}D(t-s,r,\rho)\dd\rho\dd s.
\end{align*}
Lemma~\ref{Lemma-Local-Kernel-Mass} is applicable because $0\leqslant t-s\leqslant\kappa_0T$. Taking the infimum over $r\in J_T$, the nonlinear integral inequality arises
\begin{align}\label{Subcritical-Volterra}
	h_T(t)\geqslant A_T+cT^{1-p}\int_T^t(t-s)[h_T(s)]^p\dd s.
\end{align}

Let $y$ be the maximal solution of
\begin{align*}
	y''=cT^{1-p}y^p \ \ \text{with}\ \ y(T)=A_T,\ y'(T)=0.
\end{align*}
A standard Volterra comparison applied to \eqref{Subcritical-Volterra} gives $	h_T(t)\geqslant y(t)$ throughout their common interval of existence. Multiplying the equation for $y$ by $y'$ shows that its blow-up delay $\tau_{\mathrm b}$ satisfies
\begin{align*}
	\tau_{\mathrm b}\lesssim T^{\frac{p-1}{2}}A_T^{-\frac{p-1}{2}}.
\end{align*}
Recalling $A_T=cT^{\mu_p}$, we obtain
\begin{align*}
	\frac{\tau_{\mathrm b}}{T}\lesssim T^{\frac{(p-1)(1-\mu_p)-2}{2}}.
\end{align*}
Since $\mu_p=\frac{5}{2}-p$ and $2<p<\frac{7}{3}$, one has
\begin{align*}
	(p-1)(1-\mu_p)-2=(p-1)\left(p-\frac{3}{2}\right)-2<0.
\end{align*}
Therefore, for sufficiently large $T$, one has
\begin{align*}
	\tau_{\mathrm b}<\kappa_0T.
\end{align*}
Thus, $y$ blows up before $(1+\kappa_0)T$, whereas
\begin{align*}
	h_T\in\mathcal{C}\bigl([T,(1+\kappa_0)T]\bigr)
	\ \ \mbox{and}\ \ 
	h_T(t)\geqslant y(t).
\end{align*}
This contradiction completes the proof.
\end{proof}

\section{Blow-up at the critical exponent $p=p_{\mathrm{crit}}$}\label{Section-Critical}

\subsection{A kernel lower bound for separated times}	

For $t>0$, let us define the interval
\begin{align}\label{Gamma-t}
	\Gamma_t:=\left[t-\frac{9}{8}\sqrt{t},t-\sqrt{t}\right].
\end{align}
Proposition~\ref{Prop-Linear-Seed} yields $T_0,a_0>0$ such that
\begin{align}\label{Critical-Seed}
	w(t,r)\geqslant a_0t^{-\frac{1}{2}}
\end{align}
for $t\geqslant T_0$ and $r\in\Gamma_t$.

\begin{lemma}[Separated-time kernel lower bound]\label{Lemma-Separated-Time}
	There exist $\kappa\in\left(0,\frac{1}{100}\right)$, $T_1\geqslant T_0$ and $c_1>0$ such that
	\begin{align}\label{Separated-Time-Lower}
		D(t-s,r,\rho)\geqslant c_1\frac{s}{\sqrt{t}}
	\end{align}
	whenever $t\geqslant T_1$, $T_0\leqslant s\leqslant\kappa\sqrt{t}$, $r\in\Gamma_t$ and $\rho\in\Gamma_s$.
\end{lemma}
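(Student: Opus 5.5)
The plan is to combine the integral representation \eqref{Dirichlet-Kernel-Integral} with the uniform positivity of Corollary~\ref{Coro-Positive-Front}, applied at the time $\tau:=t-s$. The point is that for these parameters the whole interval $[|r-\rho|,r+\rho]$ lies inside the moving wave-front region of $\tau$. By \eqref{Dirichlet-Kernel-Integral},
\begin{align*}
	D(\tau,r,\rho)=2\pi\int_{|r-\rho|}^{r+\rho}G_1^{(3)}(\tau,\ell)\ell\dd\ell .
\end{align*}
The integration interval has length $2\rho\approx s$. The kernel there should be of size $\tau^{-3/2}\approx t^{-3/2}$ and the weight $\ell$ of size $t$. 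This heuristically produces $t^{-1/2}s$, which is exactly the claimed bound $c_1 s/\sqrt t$.

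\textbf{Step 1: localisation of the radii.} Write $\ell=r+\theta$ with $\theta\in[-\rho,\rho]$. Then
\begin{align*}
	\tau-\ell=(t-r)-(s+\theta).
\end{align*}
Since $r\in\Gamma_t$, we have $t-r\in[\sqrt t,\tfrac98\sqrt t]$. Since $\rho\in\Gamma_s$, we have $0\leqslant s+\theta\leqslant s+\rho\leqslant 2s\leqslant 2\kappa\sqrt t$. Hence
\begin{align*}
	\tau-\ell\in\bigl[(1-2\kappa)\sqrt t,\tfrac98\sqrt t\bigr].
\end{align*}
Moreover $\tau\in[t-\kappa\sqrt t,t]$, so $\sqrt\tau=\sqrt t\,(1+O(t^{-1/2}))$. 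Writing $\ell=\tau-\sigma\sqrt\tau$, this gives $\sigma\in[\tfrac12,2]$ for $t\geqslant T_1$ large, using $\kappa<\frac1{100}$. In addition, $r-\rho\geqslant t-\tfrac98\sqrt t-s>0$, so $|r-\rho|=r-\rho$, and $\ell\geqslant t/2$ on the whole interval.

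\textbf{Step 2: kernel bound.} Apply Corollary~\ref{Coro-Positive-Front} with $\sigma_-=\frac12$ and $\sigma_+=2$. Choose $T_1$ so large that $\tau\geqslant t-\kappa\sqrt t\geqslant T_{\mathrm{fr}}$. This yields $G_1^{(3)}(\tau,\ell)\geqslant c_{\mathrm{fr}}\tau^{-3/2}\geqslant c_{\mathrm{fr}}t^{-3/2}$ for all $\ell\in[r-\rho,r+\rho]$. Therefore
\begin{align*}
	D(t-s,r,\rho)\geqslant 2\pi c_{\mathrm{fr}}t^{-\frac32}\cdot\frac t2\cdot 2\rho=2\pi c_{\mathrm{fr}}\,\frac{\rho}{\sqrt t}.
\end{align*}

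\textbf{Step 3: size of $\rho$.} Since $\rho\in\Gamma_s$, we have $\rho\geqslant s-\tfrac98\sqrt s\geqslant \tfrac s2$ once $s\geqslant T_0$ with $T_0\geqslant 6$. Enlarging $T_0$ is harmless, since \eqref{Critical-Seed} remains valid for larger $T_0$. This gives \eqref{Separated-Time-Lower} with $c_1=\pi c_{\mathrm{fr}}$.

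The only delicate point is Step~1. There one has to check that the range of $\sigma$ stays in a fixed compact subset of $(0,\infty)$ uniformly in all four parameters. This is what forces the restriction $s\leqslant\kappa\sqrt t$ with $\kappa$ small: the shift $s+\theta$ must not push $\tau-\ell$ below a fixed fraction of $\sqrt t$. I would fix $\kappa=\frac1{200}$ and check the inequalities explicitly.
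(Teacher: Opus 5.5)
Your proposal is correct and follows essentially the same route as the paper. Both arguments show that $[r-\rho,r+\rho]$ lies in the moving wave-front region at time $\tau=t-s$ (you take $\sigma\in[\frac12,2]$, the paper takes $[\frac34,\frac32]$), then apply Corollary~\ref{Coro-Positive-Front} inside \eqref{Dirichlet-Kernel-Integral}, and finish with $\ell\gtrsim t$ and $\rho\geqslant\frac{s}{2}$ after enlarging $T_0$ if necessary. Your bookkeeping via $\ell=r+\theta$ is just the paper's characteristic-variable inequality $\beta-\xi\leqslant\tau-\ell\leqslant\beta-\eta$ written out directly.
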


\begin{proof}
	Let $\tau:=t-s$. By \eqref{Gamma-t}, one obtains
	\begin{align}\label{Critical-Beta-Eta}
		\sqrt{t}\leqslant\beta\leqslant\frac{9}{8}\sqrt{t}\ \ \text{and}\ \ \sqrt{s}\leqslant\eta\leqslant\frac{9}{8}\sqrt{s},
	\end{align}
	while
	\begin{align}\label{Critical-Xi}
		2s-\frac{9}{8}\sqrt{s}\leqslant\xi\leqslant2s-\sqrt{s}.
	\end{align}
	Moreover, for $s\leqslant\kappa\sqrt{t}$, we get
	\begin{align*}
		r-\rho\geqslant t-\frac{9}{8}\sqrt{t}-s\geqslant t-\left(\frac{9}{8}+\kappa\right)\sqrt{t}>0
	\end{align*}
	for all sufficiently large $t$. Hence, $r\geqslant\rho$ after increasing $T_1$ if necessary.

For $\ell\in[r-\rho,r+\rho]$, the characteristic identities give
\begin{align*}
	\beta-\xi=\tau-(r+\rho)\leqslant\tau-\ell\leqslant\tau-(r-\rho)=\beta-\eta.
\end{align*}
By \eqref{Critical-Beta-Eta} and \eqref{Critical-Xi},
\begin{align*}
	\beta-\xi&\geqslant\sqrt{t}-2s\geqslant(1-2\kappa)\sqrt{t},\\
	\beta-\eta&\leqslant\frac{9}{8}\sqrt{t}.
\end{align*}
Moreover, $t-\kappa\sqrt{t}\leqslant\tau\leqslant t$, and hence $\sqrt{\tau}\approx\sqrt{t}$ uniformly for $s\leqslant\kappa\sqrt{t}$. Choosing $\kappa>0$ sufficiently small and then increasing $T_1$, we obtain
\begin{align}\label{Entire-Front-Region}
	\frac{3}{4}\sqrt{\tau}\leqslant\tau-\ell\leqslant\frac{3}{2}\sqrt{\tau}\ \ \text{for}\ \ \ell\in[r-\rho,r+\rho].
\end{align}
 Via \eqref{Entire-Front-Region}, the interval $[r-\rho,r+\rho]$ lies in a fixed positive region of Corollary~\ref{Coro-Positive-Front} implying
 \begin{align*}
 	G_1^{(3)}(\tau,\ell)\geqslant c\tau^{-\frac{3}{2}}\ \ \text{for}\ \ \ell\in[r-\rho,r+\rho].
 \end{align*}
After increasing $T_1$ if necessary, one finds
 \begin{align*}
 	\ell\geqslant\tau-\frac{3}{2}\sqrt{\tau}\geqslant\frac{\tau}{2}\ \ \text{and}\ \ \tau\approx t.
 \end{align*}
 Since $s\geqslant T_0$ and $\rho\geqslant s-\frac{9}{8}\sqrt{s}$, we may also assume that $\rho\geqslant\frac{s}{2}$. Therefore, \eqref{Dirichlet-Kernel-Integral} yields
 \begin{align*}
 	D(\tau,r,\rho)&=2\pi\int_{r-\rho}^{r+\rho}G_1^{(3)}(\tau,\ell)\ell\dd\ell\geqslant c\tau^{-\frac{1}{2}}\rho.
 \end{align*}
 This proves \eqref{Separated-Time-Lower}.
 \end{proof}

\subsection{Critical accumulation and iteration on moving shells}

Let us define the continuous function on $[T_0,\infty)$ as follows:
\begin{align*}
	A_{\mathrm{crit}}(t):=\sqrt{t}\inf_{r\in\Gamma_t}w(t,r)\ \ \text{for}\ \ t\geqslant T_0.
\end{align*}
Indeed, under the parametrization $r=t-\sigma\sqrt{t}$ with $\sigma\in[1,\frac{9}{8}]$, the function \begin{align*}
(t,\sigma)\mapsto\sqrt{t}\,w(t,t-\sigma\sqrt{t}\,)
\end{align*} 
is continuous on a fixed compact parameter interval. Moreover, \eqref{Critical-Seed} gives
\begin{align}\label{A-Seed}
	A_{\mathrm{crit}}(t)\geqslant a_0\ \ \text{for}\ \ t\geqslant T_0.
\end{align}

\begin{prop}[Critical accumulation inequality]\label{Prop-Critical-Accumulation}
	There exist $c_+>0$ and $T_2\geqslant T_1$ such that
	\begin{align}\label{Critical-Accumulation}
		A_{\mathrm{crit}}(t)\geqslant a_0+c_+\int_{T_0}^{\kappa\sqrt{t}}\frac{[A_{\mathrm{crit}}(s)]^{\frac{7}{3}}}{s}\dd s\ \ \text{for}\ \ t\geqslant T_2.
	\end{align}
\end{prop}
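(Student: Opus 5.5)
We start from the lower-bound inequality \eqref{Fundamental-Lower} with $p=\frac73$, evaluated at $t\geqslant T_2$ and $r\in\Gamma_t$. The idea is to discard most of the Duhamel integral and keep only the source points with $T_0\leqslant s\leqslant\kappa\sqrt{t}$ and $\rho\in\Gamma_s$. On this set Lemma~\ref{Lemma-Separated-Time} gives a quantitative kernel bound.

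\textbf{Linear term.} Proposition~\ref{Prop-Linear-Seed} applies with $\sigma\in[1,\frac98]$ and gives
\begin{align*}
W_{\lin}(t,r)=c_*M_1\mathrm{e}^{-\frac{\sigma^2}{2}}t^{-\frac12}+O(t^{-1}).
\end{align*}
Here we would choose $a_0$ in \eqref{Critical-Seed} to be, say, half of $c_*M_1\mathrm{e}^{-\frac{81}{128}}$. Then $W_{\lin}(t,r)\geqslant a_0t^{-\frac12}$ for large $t$, and the constant $a_0$ appears after multiplying by $\sqrt t$. Only this linear seed is used, not \eqref{Critical-Seed} for $w$ itself, so there is no circularity.

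\textbf{Retained Duhamel region.} The remaining integrand is nonnegative because $D\geqslant0$. For $T_0\leqslant s\leqslant\kappa\sqrt t$ and $\rho\in\Gamma_s$ we have:
\begin{itemize}
\item $D(t-s,r,\rho)\geqslant c_1 s t^{-\frac12}$ by Lemma~\ref{Lemma-Separated-Time};
\item $\rho^{1-p}=\rho^{-\frac43}\gtrsim s^{-\frac43}$, since $\rho\approx s$;
\item $|w(s,\rho)|^{\frac73}\geqslant \bigl(s^{-\frac12}A_{\mathrm{crit}}(s)\bigr)^{\frac73}$, by the definition of $A_{\mathrm{crit}}$ together with $w(s,\rho)\geqslant A_{\mathrm{crit}}(s)s^{-\frac12}\geqslant a_0s^{-\frac12}>0$ from \eqref{A-Seed};
\item $|\Gamma_s|=\frac18\sqrt s$.
\end{itemize}
Multiplying these factors, the contribution is bounded below by
\begin{align*}
c\,t^{-\frac12}\int_{T_0}^{\kappa\sqrt t} s\cdot s^{-\frac43}\cdot s^{-\frac76}\cdot s^{\frac12}\,[A_{\mathrm{crit}}(s)]^{\frac73}\dd s
= c\,t^{-\frac12}\int_{T_0}^{\kappa\sqrt t}\frac{[A_{\mathrm{crit}}(s)]^{\frac73}}{s}\dd s .
\end{align*}
The exponents sum exactly to $-1$, which is the criticality of $p=\frac73$. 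We then add the linear part, multiply by $\sqrt t$, and take the infimum over $r\in\Gamma_t$. The bounds above are uniform in $r\in\Gamma_t$, so this yields \eqref{Critical-Accumulation}.

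\textbf{Main obstacle.} The delicate point is the bookkeeping of the admissible region and the uniformity of the constants. We must check that for every $r\in\Gamma_t$ the set $\{(s,\rho): T_0\leqslant s\leqslant\kappa\sqrt t,\ \rho\in\Gamma_s\}$ lies inside the Duhamel domain $0<s<t$, $\rho>0$. We must also check that Lemma~\ref{Lemma-Separated-Time} holds uniformly there, so that $\rho\geqslant s/2$ and $\rho\leqslant s$ give $\rho^{-\frac43}\geqslant s^{-\frac43}$. In addition we need the measurability and continuity of $A_{\mathrm{crit}}$ so that the integral makes sense. All of these are ensured by taking $T_2\geqslant T_1$ large enough that $\kappa\sqrt t>T_0$ and that the $O(t^{-1})$ error in Proposition~\ref{Prop-Linear-Seed} is absorbed.
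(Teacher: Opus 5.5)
Your proposal is correct and matches the paper's proof. You restrict \eqref{Fundamental-Lower} to $T_0\leqslant s\leqslant\kappa\sqrt{t}$ and $\rho\in\Gamma_s$, bound the linear term by $a_0t^{-\frac12}$ through Proposition~\ref{Prop-Linear-Seed} (not \eqref{Critical-Seed}), and combine Lemma~\ref{Lemma-Separated-Time} with $\rho\leqslant s$, $|\Gamma_s|=\frac{\sqrt{s}}{8}$ and $w(s,\rho)\geqslant s^{-\frac12}A_{\mathrm{crit}}(s)>0$, with exponents summing to $-1$, before taking the infimum over $r\in\Gamma_t$ and multiplying by $\sqrt{t}$.
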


\begin{proof}
	Take $T_2\geqslant T_1$ sufficiently large such that $\kappa\sqrt{t}\geqslant T_0$ for $t\geqslant T_2$. Fix $t\geqslant T_2$ and $r\in\Gamma_t$. Restricting the integral in \eqref{Fundamental-Lower} to
	\begin{align*}
		T_0\leqslant s\leqslant\kappa\sqrt{t}\ \ \text{and}\ \ \rho\in\Gamma_s,
	\end{align*}
	Proposition~\ref{Prop-Linear-Seed} and Lemma~\ref{Lemma-Separated-Time} conclude
	\begin{align*}
		w(t,r)
		\geqslant a_0t^{-\frac{1}{2}}+ct^{-\frac{1}{2}}\int_{T_0}^{\kappa\sqrt{t}}s\int_{\Gamma_s}\rho^{-\frac{4}{3}}|w(s,\rho)|^{\frac{7}{3}}\dd\rho\dd s.
	\end{align*}
	For $\rho\in\Gamma_s$, the definition of $A_{\mathrm{crit}}(t)$ and \eqref{A-Seed} imply
	\begin{align*}
		w(s,\rho)\geqslant s^{-\frac{1}{2}}A_{\mathrm{crit}}(s)>0.
	\end{align*}
	From $\rho\leqslant s$ and $|\Gamma_s|=\frac{\sqrt{s}}{8}$, we obtain
	\begin{align*}
		s\int_{\Gamma_s}\rho^{-\frac{4}{3}}|w(s,\rho)|^{\frac{7}{3}}\dd\rho\geqslant c\frac{[A_{\mathrm{crit}}(s)]^{\frac{7}{3}}}{s}.
	\end{align*}
	Consequently,
	\begin{align*}
		w(t,r)\geqslant t^{-\frac{1}{2}}\left(a_0+c\int_{T_0}^{\kappa\sqrt{t}}\frac{[A_{\mathrm{crit}}(s)]^{\frac{7}{3}}}{s}\dd s\right).
	\end{align*}
	Taking the infimum over $r\in\Gamma_t$ and multiplying by $\sqrt{t}$ proves \eqref{Critical-Accumulation}.
\end{proof}

The following iteration is a variant of the classical slicing method \cite{Agemi-Kurokawa-Takamura=2000,Wakasa-Yordanov=2019,Palmieri-Takamura=2019}, adapted here to the square-root delay in \eqref{Critical-Accumulation}. By \eqref{A-Seed} and \eqref{Critical-Accumulation}, we can estimate
\begin{align*}
A_{\mathrm{crit}}(t)\geqslant a_0+c\log\left(\frac{\kappa\sqrt{t}}{T_0}\right)
\end{align*} 
for all sufficiently large $t$, and hence $A_{\mathrm{crit}}(t)\to\infty$.

Choose 
\begin{align*}
	M_0>\max\left\{1,c_+^{-\frac{1}{p-1}}\right\}.
\end{align*} 
Since $A_{\mathrm{crit}}(t)\to\infty$, we may take $T_0^\sharp\geqslant T_2$ sufficiently large such that
\begin{align*}
	A_{\mathrm{crit}}(t)\geqslant M_0\ \ \text{for}\ \ t\geqslant T_0^\sharp.
\end{align*}
We next choose recursively
\begin{align*}
	T_{j+1}^\sharp:=\left(\frac{\mathrm{e}T_j^\sharp}{\kappa}\right)^2\ \ \text{and}\ \ M_{j+1}:=c_+M_j^p.
\end{align*}
We claim that
\begin{align}\label{Critical-Discrete-Iteration}
	A_{\mathrm{crit}}(t)\geqslant M_j\ \ \text{for}\ \ t\geqslant T_j^\sharp.
\end{align}
Indeed, if \eqref{Critical-Discrete-Iteration} holds at the $j$-th step and $t\geqslant T_{j+1}^\sharp$, then $\kappa\sqrt{t}\geqslant\mathrm{e}T_j^\sharp$, as well as
\begin{align*}
	A_{\mathrm{crit}}(t)\geqslant c_+\int_{T_j^\sharp}^{\mathrm{e}T_j^\sharp}\frac{[A_{\mathrm{crit}}(s)]^p}{s}\dd s\geqslant c_+M_j^p=M_{j+1},
\end{align*}
which proves the claim by induction.

Let $d_0:=\log M_0+\frac{\log c_+}{p-1}>0$. The two recurrences deduce that
\begin{align*}
	\log M_j&=d_0p^j-\frac{\log c_+}{p-1},\\
	\log T_j^\sharp&=2^j\left(\log T_0^\sharp+2\log\frac{\mathrm{e}}{\kappa}\right)-2\log\frac{\mathrm{e}}{\kappa}.
\end{align*}
We may set $\theta:=\log_2p=\log_2\frac{7}{3}>1$. For sufficiently large $t$, choose $j$ such that $T_j^\sharp\leqslant t<T_{j+1}^\sharp$. Then \eqref{Critical-Discrete-Iteration} yields
\begin{align*}
	\log A_{\mathrm{crit}}(t)\geqslant cp^j=c(2^j)^\theta\geqslant c(\log t)^\theta,
\end{align*}
and
\begin{align}\label{Critical-Shell-Growth}
	w(t,r)\geqslant t^{-\frac{1}{2}}\exp\left(c(\log t)^\theta\right)
\end{align}
for $t\geqslant T_3$ and $r\in\Gamma_t$.

\subsection{Interior propagation and ODE comparison}

\begin{prop}[Critical interior growth]\label{Prop-Critical-Interior}
	There exist $T_{\mathrm{crit}},C_{\mathrm{crit}},c_{\mathrm{crit}}>0$ such that
	\begin{align}\label{Critical-Interior-Growth}
		w(t,r)\geqslant C_{\mathrm{crit}}t^{-1}\exp\left(c_{\mathrm{crit}}(\log t)^\theta\right)
	\end{align}
	for $t\geqslant T_{\mathrm{crit}}$ and $\frac{2}{5}t\leqslant r\leqslant\frac{3}{4}t$.
\end{prop}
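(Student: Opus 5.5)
Following Proposition~\ref{Prop-Interior-Growth}, but with the shell bound \eqref{Critical-Shell-Growth} as input, we fix $t$ large and $\frac{2}{5}t\leqslant r\leqslant\frac{3}{4}t$. We start from \eqref{Fundamental-Lower} and discard all nonlinear contributions except those whose source points lie on moving shells $\Gamma_s$. Concretely, we restrict to
\begin{align*}
\frac{3}{8}t\leqslant s\leqslant\frac{2}{5}t\ \ \text{and}\ \ \rho\in\Gamma_s .
\end{align*}
In characteristic variables this gives $\eta=s-\rho\in[\sqrt{s},\frac{9}{8}\sqrt{s}]$ and $\xi=s+\rho\approx 2s\in[\frac{3}{4}t,\frac{4}{5}t]$ up to $O(\sqrt{t})$. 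This is essentially the region $\Omega_{\mathrm{sub}}(t)$ from \eqref{Supp-04}, with the $\eta$-window adjusted to match the shells. On this region \eqref{Critical-Shell-Growth} yields
\begin{align*}
w(s,\rho)\geqslant s^{-\frac12}\exp\bigl(c(\log s)^\theta\bigr)\gtrsim t^{-\frac12}\exp\bigl(c(\log t)^\theta\bigr),
\end{align*}
since $s\approx t$ and $\log s=\log t+O(1)$, so $(\log s)^\theta\geqslant(\log t)^\theta-C(\log t)^{\theta-1}$, which is absorbed by lowering $c$. In particular, $w(s,\rho)>0$ there.

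Next we need a uniform lower bound on the kernel. With $\beta=t-r\in[\frac14 t,\frac35 t]$, $\tau=t-s\approx t$, $\beta-\eta\gtrsim t\approx\sqrt{\tau}\cdot\sqrt{t}$ and $\xi-\beta\geqslant\frac{3}{20}t-O(\sqrt t)$, the hypotheses of Lemma~\ref{Lemma-Saturated-Kernel} hold. We also have $r\geqslant\frac{2}{5}t\geqslant\rho$ after a harmless shrinking of the $s$-range so that $\rho\leqslant s\leqslant\frac{2}{5}t$. Hence $D(t-s,r,\rho)\geqslant c_D$, exactly as in the proof of Proposition~\ref{Prop-Interior-Growth}.

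Integrating, the $s$-interval has length $\approx t$, the $\rho$-interval $\Gamma_s$ has length $\approx\sqrt t$, and $\rho^{1-p}\approx t^{-\frac43}$ with $p=\frac73$. The nonlinear term is therefore bounded below by
\begin{align*}
c\,t\cdot\sqrt{t}\cdot t^{-\frac43}\cdot t^{-\frac{7}{6}}\exp\bigl(\tfrac73c(\log t)^\theta\bigr)=c\,t^{-1}\exp\bigl(\tfrac73 c(\log t)^\theta\bigr).
\end{align*}
This produces exactly the $t^{-1}$ prefactor in \eqref{Critical-Interior-Growth}, with $c_{\mathrm{crit}}=\frac73c$ or any smaller constant.

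Finally, the linear term $W_{\lin}(t,r)$ has no sign in the interior cone, but Lemma~\ref{Lemma-Interior-Decay} gives $|W_{\lin}(t,r)|\leqslant C_Nt^{-N}$ there. Since the nonlinear lower bound exceeds any power of $t$ times $t^{-1}$, the linear term can be absorbed for $t\geqslant T_{\mathrm{crit}}$. The main obstacle is the bookkeeping in the kernel step. We must check that the chosen window, namely $s\in[\frac38 t,\frac25 t]$ with shells $\Gamma_s$, simultaneously stays inside the validity range $s\geqslant T_3$ of \eqref{Critical-Shell-Growth}, satisfies $r\geqslant\rho$, and keeps the margins $\beta-\eta\geqslant\frac14\sqrt\tau$ and $\xi-\beta\geqslant-\frac18\sqrt\tau$ of Lemma~\ref{Lemma-Saturated-Kernel} uniformly in $r\in[\frac25t,\frac34t]$. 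The most delicate condition is $\xi-\beta$ at the endpoint $r=\frac25 t$, where $\beta=\frac35t$. If these margins fail there, the first thing to try is moving the $s$-window slightly upward, to $\xi\geqslant\frac{3}{4}t$, while keeping $\rho\leqslant r$.
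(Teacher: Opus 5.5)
Your proposal is correct and follows essentially the same argument as the paper. Both use the source window $\frac{3}{8}t\leqslant s\leqslant\frac{2}{5}t$, $\rho\in\Gamma_s$, the uniform kernel bound from Lemma~\ref{Lemma-Saturated-Kernel}, the shell growth \eqref{Critical-Shell-Growth}, the power count $t\cdot\sqrt{t}\cdot t^{-\frac{4}{3}}\cdot t^{-\frac{7}{6}}=t^{-1}$, and absorption of $W_{\lin}$ via Lemma~\ref{Lemma-Interior-Decay}. Your closing worries are unnecessary: your own computation already gives $\xi-\beta\geqslant\frac{3}{20}t-O(\sqrt{t})>0$, and $\rho\leqslant s-\sqrt{s}<\frac{2}{5}t\leqslant r$ holds without shrinking the $s$-range.
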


\begin{proof}
	Fix $t$ sufficiently large and $\frac{2}{5}t\leqslant r\leqslant\frac{3}{4}t$. In \eqref{Fundamental-Lower}, let us restrict the integration to
	\begin{align}\label{Supp-06}
		\frac{3}{8}t\leqslant s\leqslant\frac{2}{5}t\ \ \text{and}\ \ \rho\in\Gamma_s.
	\end{align}
	On this region, $s\approx\rho\approx t$ and $\tau=t-s\approx t$. Moreover, recalling that $\beta=t-r$, $\eta=s-\rho$ and $\xi=s+\rho$, we have
	\begin{align*}
		\beta-\eta&\geqslant\frac{t}{4}-C\sqrt{t}\gtrsim t,\\
		\xi-\beta&\geqslant\frac{3}{4}t-C\sqrt{t}-\frac{3}{5}t\gtrsim t,
	\end{align*}
	whereas $\rho\leqslant s-\sqrt{s}<\frac{2}{5}t\leqslant r$. Hence, Lemma~\ref{Lemma-Saturated-Kernel} gives $D(t-s,r,\rho)\geqslant c$ throughout the retained region.
	
	According to \eqref{Critical-Shell-Growth}, we are able to deduce
	\begin{align*}
		|w(s,\rho)|^{\frac{7}{3}}\geqslant s^{-\frac{7}{6}}\exp\left(c(\log s)^\theta\right)\ \ \text{for}\ \ \rho\in\Gamma_s.
	\end{align*}
	Since $|\Gamma_s|=\frac{\sqrt{s}}{8}$ and $(\log s)^\theta\geqslant c(\log t)^\theta$ on $\frac{3}{8}t\leqslant s\leqslant\frac{2}{5}t$, \eqref{Fundamental-Lower} eventually yields
	\begin{align*}
		w(t,r)&\geqslant W_{\lin}(t,r)+c\int_{\frac{3}{8}t}^{\frac{2}{5}t}s^{-\frac{4}{3}}s^{-\frac{7}{6}}\exp\left(c(\log s)^\theta\right)|\Gamma_s|\dd s\\
		&\geqslant W_{\lin}(t,r)+ct^{-1}\exp\left(c_1(\log t)^\theta\right).
	\end{align*}
	Lemma~\ref{Lemma-Interior-Decay} shows that the homogeneous term is negligible compared with the positive term for sufficiently large $t$. This proves \eqref{Critical-Interior-Growth}.
\end{proof}

\begin{proof}[Proof of Theorem~\ref{Thm-Blowup} for $p=\frac{7}{3}$]
	Assume, to the contrary, that the maximal mild Sobolev solution is global in time. Let $\kappa_0$ be given by Lemma~\ref{Lemma-Local-Kernel-Mass}. For $T$ sufficiently large, $\ml Q_T$ is defined by \eqref{Supp-05}.
	Since $\kappa_0<\frac{1}{100}$, the cylinder $\ml Q_T$ lies in the interior region of Proposition~\ref{Prop-Critical-Interior} so that
	\begin{align*}
		w(t,r)\geqslant \widetilde{A}_T:=CT^{-1}\exp\left(c(\log T)^\theta\right)\ \ \text{for}\ \ (t,r)\in\ml Q_T.
	\end{align*}
	
	For $(t,r)\in\ml Q_T$, let us consider the source region \eqref{Supp-06} used in the proof of Proposition~\ref{Prop-Critical-Interior}. Thanks to
	\begin{align*}
		s\leqslant\frac{2}{5}(1+\kappa_0)T<\frac{T}{2},
	\end{align*}
	this region is disjoint from $[T,t]\times J_T$. The same proof therefore yields
	\begin{align*}
		W_{\lin}(t,r)+\int_{\frac{3}{8}t}^{\frac{2}{5}t}\int_{\Gamma_s}D(t-s,r,\rho)\rho^{-\frac{4}{3}}|w(s,\rho)|^{\frac{7}{3}}\dd\rho\dd s\geqslant \widetilde{A}_T.
	\end{align*}
	
	Let us define
	\begin{align*}
		h_T(t):=\inf_{r\in J_T}w(t,r)\ \ \text{for}\ \ T\leqslant t\leqslant(1+\kappa_0)T.
	\end{align*}
	Then $h_T\in\mathcal{C}\bigl([T,(1+\kappa_0)T]\bigr)$ and $h_T(t)\geqslant \widetilde{A}_T$. Retaining in addition the region $T\leqslant s\leqslant t$ and $\rho\in J_T$ in \eqref{Fundamental-Lower}, we obtain
	\begin{align*}
		w(t,r)\geqslant \widetilde{A}_T+cT^{-\frac{4}{3}}\int_T^t [h_T(s)]^{\frac{7}{3}}\int_{J_T}D(t-s,r,\rho)\dd\rho\dd s.
	\end{align*}
	Indeed, $\rho^{-\frac{4}{3}}\gtrsim T^{-\frac{4}{3}}$ on $J_T$, while $w(s,\rho)\geqslant h_T(s)>0$. Lemma~\ref{Lemma-Local-Kernel-Mass} applies since $0\leqslant t-s\leqslant\kappa_0T$. Taking the infimum over $r\in J_T$ yields
	\begin{align}\label{Critical-Volterra}
		h_T(t)\geqslant \widetilde{A}_T+cT^{-\frac{4}{3}}\int_T^t(t-s)[h_T(s)]^{\frac{7}{3}}\dd s.
	\end{align}
	
	Let $y$ be the maximal solution of
	\begin{align*}
		y''=cT^{-\frac{4}{3}}y^{\frac{7}{3}}\ \ \text{with}\ \ y(T)=\widetilde{A}_T,\ y'(T)=0.
	\end{align*}
	A standard Volterra comparison applied to \eqref{Critical-Volterra} gives $h_T(t)\geqslant y(t)$ throughout their common interval of existence. Multiplying the equation for $y$ by $y'$ shows that its blow-up delay $\tau_{\mathrm b}$ satisfies
	\begin{align*}
		\tau_{\mathrm b}\lesssim T^{\frac{2}{3}}\widetilde{A}_T^{-\frac{2}{3}}.
	\end{align*}
	Consequently,
	\begin{align*}
		\frac{\tau_{\mathrm b}}{T}\lesssim T^{-\frac{1}{3}}\widetilde{A}_T^{-\frac{2}{3}}\lesssim T^{\frac{1}{3}}\exp\left(-c(\log T)^\theta\right)\to0\ \ \text{as}\ \ T\to\infty
	\end{align*}
	because of $\theta>1$. Thus $\tau_{\mathrm b}<\kappa_0T$ for sufficiently large $T$. The comparison solution then blows up before $(1+\kappa_0)T$, contradicting the continuity of $h_T$ on $[T,(1+\kappa_0)T]$. This completes the proof.
\end{proof}

\appendix

\section{Middle- and high-frequency multiplier bounds}
\label{Appendix-Symbols}

We collect the middle- and high-frequency multiplier estimates used throughout the linear and pointwise kernel analysis.

At high frequencies, we denote
\begin{align}\label{ab-Definition}
	a(\rho):=-\lambda_+(\rho)\ \ \text{and}\ \ b(\rho):=-\lambda_-(\rho).
\end{align}
As $\rho\to\infty$, one has the following asymptotic behavior:
\begin{align}\label{ab-Asymptotics}
	a(\rho)=1+O(\rho^{-2}),\ \ b(\rho)=\rho^2-1+O(\rho^{-2})\ \ \text{and}\ \ b(\rho)-a(\rho)\approx\rho^2.
\end{align}
For the middle frequencies, the matrix
\begin{align*}
	\mathbf A(\rho):=\begin{pmatrix}
		0&1\\
		-\rho^2&-\rho^2
	\end{pmatrix}
\end{align*}
has spectrum in a fixed left half-plane. At the double root $\rho=2$, the corresponding polynomial factor is bounded by $1+t$ and can be absorbed into a weaker exponential decay. Consequently, there exist constants $c>0$ and $C>0$ such that
\begin{align}\label{Middle-Matrix}
	\sup_{\rho\in\supp\chi_{\midf}}\|\mathrm{e}^{t\mathbf A(\rho)}\|\leqslant C\mathrm{e}^{-ct}\ \ \text{for every}\  \ t\geqslant0.
\end{align}

\begin{lemma}[Uniform middle-frequency multiplier bounds]\label{Lemma-Middle-Multiplier}
	Let $I\subset\subset(0,\infty)$ be a compact interval containing $\supp\chi_{\midf}$. There exists $c_I>0$ such that, for every $m\in\mb N_0$, there exists $C_m>0$ satisfying
	\begin{align*}
		\sup_{\rho\in I}\left\|\partial_\rho^m\mathrm{e}^{t\mathbf A(\rho)}\right\|\leqslant C_m\,\mathrm{e}^{-c_I t}
	\end{align*}
	for every $t\geqslant0$.
\end{lemma}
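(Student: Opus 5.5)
We want uniform bounds on $\partial_\rho^m \mathrm{e}^{t\mathbf A(\rho)}$ for $\rho\in I$. The plan is to represent the matrix exponential by a Cauchy contour integral of the resolvent. This avoids the double root at $\rho=2$, where eigenvector formulas degenerate.

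\textbf{Spectral gap.} The eigenvalues of $\mathbf A(\rho)$ are the roots of $\lambda^2+\rho^2\lambda+\rho^2=0$. For $\rho\in(0,2]$ their real part is $-\rho^2/2$. For $\rho>2$ they are $-a(\rho)$ and $-b(\rho)$, both negative and continuous in $\rho$. Since $I$ is a compact subset of $(0,\infty)$, continuity and compactness give $\delta_I>0$ such that the spectrum of $\mathbf A(\rho)$ lies in $\{\operatorname{Re}\lambda\leqslant-2\delta_I\}$ for every $\rho\in I$. The eigenvalues are also bounded uniformly, say $|\lambda|\leqslant R_I$.

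\textbf{Contour representation.} Let $\Gamma$ be the fixed closed contour bounding the rectangle $\{-R_I-1\leqslant\operatorname{Re}\lambda\leqslant-\delta_I,\ |\operatorname{Im}\lambda|\leqslant R_I+1\}$. Note that $\Gamma$ does not depend on $\rho$. Then
\begin{align*}
\mathrm{e}^{t\mathbf A(\rho)}=\frac{1}{2\pi i}\oint_\Gamma \mathrm{e}^{t\lambda}(\lambda-\mathbf A(\rho))^{-1}\dd\lambda
\end{align*}
for every $\rho\in I$. The resolvent is explicit:
\begin{align*}
(\lambda-\mathbf A(\rho))^{-1}=\frac{1}{\lambda^2+\rho^2\lambda+\rho^2}\begin{pmatrix}\lambda+\rho^2&1\\-\rho^2&\lambda\end{pmatrix}.
\end{align*}
Here $\rho$ ranges over a slightly enlarged compact neighbourhood of $I$ inside $(0,\infty)$, chosen so that the spectral gap persists and $\Gamma$ still avoids the roots. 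On $\Gamma\times I$ the denominator is then a continuous nonvanishing function, so it is bounded below in modulus by a constant $d_I>0$.

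\textbf{Differentiation.} Differentiate under the integral sign in $\rho$. The entries are rational in $(\lambda,\rho)$ with that denominator, so each $\partial_\rho^m$ of the resolvent is a polynomial in $(\lambda,\rho)$ divided by $(\lambda^2+\rho^2\lambda+\rho^2)^{m+1}$. Hence it is bounded by a constant $C'_m$ on the compact set $\Gamma\times I$. On $\Gamma$ we have $|\mathrm{e}^{t\lambda}|\leqslant \mathrm{e}^{-\delta_I t}$ and $\Gamma$ has finite length, which gives
\begin{align*}
\sup_{\rho\in I}\|\partial_\rho^m\mathrm{e}^{t\mathbf A(\rho)}\|\leqslant C_m\mathrm{e}^{-\delta_I t}.
\end{align*}
So the lemma holds with $c_I=\delta_I$, a rate independent of $m$.

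\textbf{Main obstacle.} The only delicate point is keeping the contour uniform across the double root $\rho=2$. The contour approach handles this automatically, since no eigenvector or diagonalisation is needed. The remaining check is that one fixed rectangle separates the spectrum from $\{\operatorname{Re}\lambda>-\delta_I\}$ for all $\rho$ in the compact set, which follows from continuity of the roots.
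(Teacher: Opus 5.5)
Your proof is correct, and it takes a genuinely different route from the paper's. The paper gives a two-line sketch with three ingredients:
\begin{itemize}
\item it uses the uniform spectral gap on $I$;
\item it notes that the Jordan block at the double root $\rho=2$ costs only a polynomial factor in $t$;
\item it appeals to ``standard differentiation estimates for parameter-dependent matrix semigroups'' to obtain $C_m(1+t)^m\mathrm{e}^{-2c_It}$, and then absorbs $(1+t)^m$ by halving the rate.
\end{itemize}
The differentiation estimates in the third step are, in effect, iterated Duhamel formulas such as $\partial_\rho\mathrm{e}^{t\mathbf A(\rho)}=\int_0^t\mathrm{e}^{(t-s)\mathbf A(\rho)}\mathbf A'(\rho)\mathrm{e}^{s\mathbf A(\rho)}\dd s$.

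Your Cauchy integral over a fixed, $\rho$-independent rectangle bypasses both the second and third steps. The Jordan block never needs separate treatment: all that matters is that $\lambda^2+\rho^2\lambda+\rho^2$ does not vanish on the compact set $\Gamma\times I$ (or a slight enlargement of $I$, as you note). Differentiating the explicit resolvent in $\rho$ produces no powers of $t$, so the rate $\delta_I$ is visibly independent of $m$. The cost is the same in both approaches: you also give up half of the spectral gap, because the contour must lie strictly to the right of the spectrum.

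So the two arguments give the same result. Yours is self-contained and fills in exactly what the paper leaves to ``standard'' facts, including the uniform bound for $m=0$ on which \eqref{Middle-Matrix} rests. The paper's route is shorter but presupposes a uniform semigroup bound across the double root, which is the point your contour handles automatically.
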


\begin{proof}
	Because of $I\subset\subset(0,\infty)$, the characteristic roots of $\mathbf A(\rho)$ remain uniformly in the open left half-plane. The double root at $\rho=2$ produces only a polynomial factor in $t$. Standard differentiation estimates for parameter-dependent matrix semigroups therefore lead to
	\begin{align*}
		\sup_{\rho\in I}\left\|\partial_\rho^m\mathrm{e}^{t\mathbf A(\rho)}\right\|\leqslant C_m(1+t)^m\mathrm{e}^{-2c_I t} \leqslant C_m\,\mathrm{e}^{-c_I t}.
	\end{align*}
	Our proof is complete.
\end{proof}

The high-frequency root splitting and the corresponding symbol structure are standard for structurally damped evolution equations (see, for instance, \cite[Section~8]{DAbbicco-Ebert=2022}).

\begin{lemma}[High-frequency root bounds]\label{Lemma-High-Roots}
	Let $a$ and $b$ be defined in \eqref{ab-Definition}. For every $k\in\mb N_0$, there exists $C_k>0$ such that
	\begin{align*}
		\left|\partial_\rho^k\bigl(a(\rho)-1\bigr)\right|&\leqslant C_k\rho^{-2-k},\\
		\left|\partial_\rho^k b(\rho)\right|&\leqslant C_k\rho^{2-k},\\
		\left|\partial_\rho^k\frac{1}{b(\rho)-a(\rho)}\right|&\leqslant C_k\rho^{-2-k},
	\end{align*}
	for every $\rho\geqslant R_0$. Moreover, $1\leqslant a(\rho)\leqslant 2$ and $b(\rho)\geqslant\frac{\rho^2}{2}$.
\end{lemma}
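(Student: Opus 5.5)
The plan is to use the explicit formulas $a(\rho)=\frac{\rho^2-\rho\sqrt{\rho^2-4}}{2}$ and $b(\rho)=\frac{\rho^2+\rho\sqrt{\rho^2-4}}{2}$. We expand them in powers of $\rho^{-1}$ for $\rho\geqslant R_0>4$. Writing
\begin{align*}
	\sqrt{\rho^2-4}=\rho\sqrt{1-4\rho^{-2}}=\rho\, g(\rho^{-2}),\qquad g(z):=\sqrt{1-4z},
\end{align*}
we note that $g$ is real analytic on $|z|<\frac14$. Since $\rho^{-2}\leqslant R_0^{-2}<\frac1{16}$, everything reduces to smooth functions of the single variable $z=\rho^{-2}$ on a compact subinterval of the disc of analyticity.

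\textbf{The root $b$.} We get $b(\rho)=\frac{\rho^2}{2}\bigl(1+g(\rho^{-2})\bigr)$, that is, $\rho^2$ times a smooth function of $\rho^{-2}$. The chain rule then gives $|\partial_\rho^k b|\leqslant C_k\rho^{2-k}$. More generally, if $h$ is smooth on $[0,R_0^{-2}]$, then
\begin{align*}
	\partial_\rho^k\bigl[\rho^m h(\rho^{-2})\bigr]=O(\rho^{m-k}).
\end{align*}
This holds because each derivative of $\rho^{-2}$ costs one power of $\rho$.

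\textbf{The root $a$.} We rationalize, using $ab=\rho^2$:
\begin{align*}
	a=\frac{\rho^2}{b}=\frac{2}{1+g(\rho^{-2})}.
\end{align*}
So $a-1=\frac{1-g(z)}{1+g(z)}$ with $z=\rho^{-2}$. Since $1-g(z)=2z+O(z^2)$ vanishes at $z=0$, we can write $a-1=z\,\tilde h(z)$ with $\tilde h$ smooth. Hence $a-1=\rho^{-2}\tilde h(\rho^{-2})$, and the previous observation yields $|\partial_\rho^k(a-1)|\leqslant C_k\rho^{-2-k}$.

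\textbf{The difference.} We have $b-a=\rho\sqrt{\rho^2-4}=\rho^2 g(\rho^{-2})$, so
\begin{align*}
	\frac{1}{b-a}=\rho^{-2}\frac{1}{g(\rho^{-2})}.
\end{align*}
Here $g\geqslant\sqrt{1-4/16}>0$ on the relevant range, so $1/g$ is smooth there, and the same argument gives the bound $C_k\rho^{-2-k}$.

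\textbf{The explicit inequalities.} First, $a\geqslant1$, since $1+g\leqslant2$ for $g\in(0,1]$. Second, $a\leqslant2$ because $g>0$. Third, $b\geqslant\frac{\rho^2}{2}$ because $g\geqslant0$.

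The only mildly delicate point is making the ``derivatives of $h(\rho^{-2})$'' claim rigorous uniformly in $\rho\geqslant R_0$. I would do this by induction, via the Faà di Bruno formula: $\partial_\rho^k[h(\rho^{-2})]$ is a finite sum of terms of the form $h^{(j)}(\rho^{-2})\,\rho^{-2j-k}$, with $h^{(j)}$ bounded on the compact set $[0,R_0^{-2}]$. Apart from this, the proof is a straightforward computation.
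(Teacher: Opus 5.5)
Your proof is correct and complete. The paper does not actually prove this lemma. It calls the high-frequency root splitting standard and points to Section~8 of D'Abbicco--Ebert (2022), so your argument is a self-contained replacement for that citation rather than a reconstruction of a written proof.

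You use the right two ingredients:
\begin{itemize}
\item Vieta's relation $ab=\rho^2$ gives $a=\frac{2}{1+g(\rho^{-2})}$. This removes the cancellation hidden in $\frac{\rho^2-\rho\sqrt{\rho^2-4}}{2}$, and shows that $a-1=\frac{1-g(z)}{1+g(z)}$ is $z$ times a function smooth up to $z=0$.
\item The uniform bound $\partial_\rho^k[\rho^m h(\rho^{-2})]=O(\rho^{m-k})$ holds on $[R_0,\infty)$. Fa\`a di Bruno gives it because every term has the form $h^{(j)}(\rho^{-2})\rho^{-2j-k}$, with $h^{(j)}$ bounded on $[0,R_0^{-2}]$.
\end{itemize}

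The cited general theory is built for whole families of structurally damped and $\sigma$-evolution symbols. Your route is tailored to this single quadratic characteristic polynomial, but it gives slightly more. The functions $a-1$, $\rho^{-2}b$ and $\rho^2(b-a)^{-1}$ are analytic in $z=\rho^{-2}$ for $|z|<\frac14$. Hence they are classical symbols with expansions in even powers of $\rho^{-1}$, and the pointwise facts $1\leqslant a<2$ and $b\geqslant\frac{\rho^2}{2}$ come for free.

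A minor economy is available. Since $a+b=\rho^2$, the bounds for $\partial_\rho^k b$ follow directly from those for $a$. For $k\geqslant3$ they even improve to $|\partial_\rho^k b|=|\partial_\rho^k a|\lesssim\rho^{-2-k}$.
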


\begin{lemma}[Small-time high-frequency bounds]\label{Lemma-High-Small-Time}
	For $0\leqslant t\leqslant1$ and $\rho\geqslant R_0$, one has
	\begin{align}\label{High-Small-Time-Bounds}
		\rho^2|\widehat S_1(t,\rho)|+|\partial_t\widehat S_1(t,\rho)|+|\widehat S_0(t,\rho)|+|\partial_t\widehat S_0(t,\rho)|\leqslant C
	\end{align}
	and
	\begin{align*}
		\int_{R_0}^\infty\left|\partial_\rho\bigl(\chi_{\high}(\rho)\rho\widehat S_1(t,\rho)\bigr)\right|\mathrm{d}\rho\leqslant C\sqrt{t}.
	\end{align*}
\end{lemma}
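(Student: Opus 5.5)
The plan is to work directly with the explicit high-frequency multipliers and the root bounds of Lemma~\ref{Lemma-High-Roots}. With $a,b$ as in \eqref{ab-Definition}, one has
\begin{align*}
	\widehat S_1(t,\rho)=\frac{\mathrm{e}^{-at}-\mathrm{e}^{-bt}}{b-a},\qquad \widehat S_0(t,\rho)=\frac{b\mathrm{e}^{-at}-a\mathrm{e}^{-bt}}{b-a},
\end{align*}
and
\begin{align*}
	\partial_t\widehat S_1=\frac{-a\mathrm{e}^{-at}+b\mathrm{e}^{-bt}}{b-a},\qquad \partial_t\widehat S_0=-ab\,\widehat S_1 .
\end{align*}
For \eqref{High-Small-Time-Bounds}, I would only use $1\leqslant a\leqslant2$, $b\approx\rho^2$ and $b-a\approx\rho^2$ on $[R_0,\infty)$. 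These give
\begin{align*}
	\rho^2|\widehat S_1|\lesssim\frac{\rho^2}{b-a}\lesssim1,\qquad |\partial_t\widehat S_1|+|\widehat S_0|\lesssim\frac{a+b}{b-a}\lesssim1,\qquad |\partial_t\widehat S_0|\lesssim\frac{ab}{b-a}\lesssim1 .
\end{align*}
This part is uniform in $t\geqslant0$, not only on $[0,1]$.

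The key elementary inequality for the integral bound is
\begin{align*}
	|\mathrm{e}^{-at}-\mathrm{e}^{-bt}|\leqslant\min\{1,(b-a)t\}.
\end{align*}
It follows by writing the difference as $(b-a)t\int_0^1\mathrm{e}^{-(a+\theta(b-a))t}\dd\theta$. Combined with $b-a\approx\rho^2$, it yields $|\widehat S_1(t,\rho)|\lesssim\min\{t,\rho^{-2}\}$. Next I would expand
\begin{align*}
	\partial_\rho\bigl(\chi_{\high}\rho\widehat S_1\bigr)=\chi_{\high}'\rho\widehat S_1+\chi_{\high}\widehat S_1+\chi_{\high}\rho\,\partial_\rho\widehat S_1,
\end{align*}
with
\begin{align*}
	\partial_\rho\widehat S_1=\frac{-a't\mathrm{e}^{-at}+b't\mathrm{e}^{-bt}}{b-a}-\frac{(b'-a')(\mathrm{e}^{-at}-\mathrm{e}^{-bt})}{(b-a)^2}.
\end{align*}
The term containing $\chi_{\high}'$ lives on a compact $\rho$-set and is $O(t)$. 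The other terms are controlled by the derivative bounds of Lemma~\ref{Lemma-High-Roots}, namely $|a'|\lesssim\rho^{-3}$, $|b'|\lesssim\rho$ and $|b'-a'|\lesssim\rho$, together with $b\geqslant\frac{\rho^2}{2}$:
\begin{align*}
	|\chi_{\high}\widehat S_1|&\lesssim\min\{t,\rho^{-2}\}, &\frac{\rho|a'|t}{b-a}&\lesssim t\rho^{-4},\\
	\frac{\rho|b'|t\mathrm{e}^{-bt}}{b-a}&\lesssim t\mathrm{e}^{-\frac{t\rho^2}{2}}, &\frac{\rho|b'-a'|\,|\mathrm{e}^{-at}-\mathrm{e}^{-bt}|}{(b-a)^2}&\lesssim\rho^{-2}\min\{1,\rho^2t\}=\min\{\rho^{-2},t\}.
\end{align*}

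Integrating over $\rho\in[R_0,\infty)$, the $t\rho^{-4}$ term contributes $O(t)$. The Gaussian term contributes $t\cdot O(t^{-\frac12})=O(\sqrt t)$ by the substitution $\rho\mapsto\sqrt t\rho$. The terms $\min\{t,\rho^{-2}\}$ are handled by splitting at $\rho=t^{-\frac12}$, which gives $t\cdot t^{-\frac12}+\int_{t^{-\frac12}}^\infty\rho^{-2}\dd\rho\lesssim\sqrt t$. Since $t\leqslant\sqrt t$ for $0\leqslant t\leqslant1$, the claimed bound $C\sqrt t$ follows.

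I see no real obstacle here. The only point requiring care is the last quotient term, where the naive bound $\rho^{-2}$ alone would give only $O(1)$. It must be paired with the small-time gain $(b-a)t$ from the difference of exponentials, so that the bound becomes $\min\{t,\rho^{-2}\}$. This is exactly what produces the $\sqrt t$ rate later used in \eqref{High-Kernel-Small-Time}.
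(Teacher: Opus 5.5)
Your proof is correct and follows essentially the same route as the paper: the difference formula giving $|\widehat S_1(t,\rho)|\lesssim\min\{t,\rho^{-2}\}$, the root bounds of Lemma~\ref{Lemma-High-Roots}, the pointwise bound $|\partial_\rho(\rho\widehat S_1)|\lesssim\min\{t,\rho^{-2}\}+t\mathrm{e}^{-ct\rho^2}$, an $O(t)$ contribution from $\chi_{\high}'$, and integration in $\rho$. The only difference is cosmetic: the paper writes $\rho\widehat S_1=q(\rho)\bigl(\mathrm{e}^{-at}-\mathrm{e}^{-bt}\bigr)$ with $q=\rho/(b-a)$, while you expand the quotient rule term by term and state explicitly the pairing of the $(b'-a')$-term with the small-time gain, which the paper leaves implicit.
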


\begin{proof}
	The difference formula gives
	\begin{align*}
		\widehat S_1(t,\rho)=t\int_0^1\exp\left(-t\bigl[(1-\vartheta)a(\rho)+\vartheta b(\rho)\bigr]\right)\mathrm{d}\vartheta.
	\end{align*}
	Since $a(\rho)\geqslant1$ and $b(\rho)-a(\rho)\approx\rho^2$, it follows that $|\widehat S_1(t,\rho)|\lesssim\min\{t,\rho^{-2}\}$. The explicit formulas for $\widehat S_0$ and $\widehat S_1$, together with $\partial_t\widehat S_0(t,\rho)=-\rho^2\widehat S_1(t,\rho)$ yield \eqref{High-Small-Time-Bounds}.
	
    According to $q(\rho):=\frac{\rho}{b(\rho)-a(\rho)}$, by Lemma~\ref{Lemma-High-Roots}, one claims
	\begin{align*}
		|q(\rho)|\lesssim\rho^{-1},\ \ |q'(\rho)|\lesssim\rho^{-2},\ \ |a'(\rho)|\lesssim\rho^{-3},\ \ |b'(\rho)|\lesssim\rho.
	\end{align*}
	Using the next identity:
	\begin{align*}
		\rho\widehat S_1(t,\rho)=q(\rho)\left(\mathrm{e}^{-a(\rho)t}-\mathrm{e}^{-b(\rho)t}\right),
	\end{align*}
	we estimate
	\begin{align*}
		\left|\partial_\rho\bigl(\rho\widehat S_1(t,\rho)\bigr)\right|\lesssim\min\left\{t,\rho^{-2}\right\}+t\mathrm{e}^{-ct\rho^2}.
	\end{align*}
	The term involving $\chi_{\high}'$ is supported in a fixed compact interval and contributes $O(t)$. For $0<t\leqslant1$, we estimate
	\begin{align*}
		\int_{R_0}^\infty\min\left\{t,\rho^{-2}\right\}\mathrm{d}\rho+t\int_{R_0}^\infty\mathrm{e}^{-ct\rho^2}\mathrm{d}\rho\lesssim\sqrt{t}.
	\end{align*}
	Since the assertion is immediate for $t=0$, the proof is complete.
\end{proof}

\begin{lemma}[Large-time high-frequency symbol bounds]\label{Lemma-High-Symbols}
	There exists $c>0$ such that, for every multi-index $\gamma$ and every $j\in\{0,1\}$, there exists $C_{\gamma,j}>0$ satisfying
	\begin{align*}
		\left|\partial_\zeta^\gamma\partial_t^j\bigl(\chi_{\high}(|\zeta|)\widehat S_1(t,\zeta)\bigr)\right|&\leqslant C_{\gamma,j}\,\mathrm{e}^{-ct}\langle\zeta\rangle^{-2-|\gamma|},\\\left|\partial_\zeta^\gamma\partial_t^j\bigl(\chi_{\high}(|\zeta|)\bigl(\widehat S_0(t,\zeta)-\mathrm{e}^{-t}\bigr)\bigr)\right|
		&\leqslant C_{\gamma,j}\,\mathrm{e}^{-ct}\langle\zeta\rangle^{-2-|\gamma|},
	\end{align*}
	for every $t\geqslant1$. The bounded-time estimates needed below are provided separately by Lemma~\ref{Lemma-High-Small-Time}.
\end{lemma}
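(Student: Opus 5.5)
We work directly with the explicit high-frequency formulas. For $\rho\geqslant R_0$ the roots are real, and
\begin{align*}
\widehat S_1(t,\rho)&=\frac{\mathrm{e}^{-a(\rho)t}-\mathrm{e}^{-b(\rho)t}}{b(\rho)-a(\rho)},\\
\widehat S_0(t,\rho)-\mathrm{e}^{-t}&=\frac{b\,\mathrm{e}^{-at}-a\,\mathrm{e}^{-bt}}{b-a}-\mathrm{e}^{-t}.
\end{align*}
The plan is to show that each piece is a symbol of order $-2$ in $\rho$ carrying a factor $\mathrm{e}^{-ct}$. We then pass from $\rho$-derivatives to $\zeta$-derivatives for the radial function $\chi_{\high}(|\zeta|)m(t,|\zeta|)$. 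Since $\chi_{\high}$ vanishes near $\rho\leqslant R_0$, the map $\zeta\mapsto|\zeta|$ is smooth on the support. The Faà di Bruno formula, together with $|\partial_\zeta^\gamma|\zeta||\lesssim|\zeta|^{1-|\gamma|}$, then converts the bounds $|\partial_\rho^k m|\lesssim\mathrm{e}^{-ct}\rho^{-2-k}$ into the claimed $\zeta$-bounds. Derivatives falling on $\chi_{\high}$ live on a compact set and are harmless. The time derivative $j=1$ only multiplies the exponentials by $-a$ or $-b$, so it is treated in the same way with one extra factor.

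The core step is a symbol calculus for $\mathrm{e}^{-a(\rho)t}$ and $\mathrm{e}^{-b(\rho)t}$ for $t\geqslant1$, using Lemma~\ref{Lemma-High-Roots}.

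For the slow root, $\partial_\rho^k\mathrm{e}^{-at}$ is a sum of terms $\mathrm{e}^{-at}t^{m}\prod_i\partial_\rho^{k_i}a$ with $\sum k_i=k$ and $k_i\geqslant1$. Each such factor is $\lesssim\rho^{-2-k_i}$, so the sum is bounded by $\mathrm{e}^{-t}(1+t)^k\rho^{-k-2m}\lesssim\mathrm{e}^{-ct}\rho^{-k}$, using $a\geqslant1$ and absorbing the polynomial factor in $t$.

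For the fast root, the terms are $\mathrm{e}^{-bt}t^m\prod\partial_\rho^{k_i}b$, bounded by $\mathrm{e}^{-bt}t^m\rho^{2m-k}$. Using $b\geqslant\rho^2/2$ and $t\geqslant1$, the estimate $(t\rho^2)^m\mathrm{e}^{-t\rho^2/4}\lesssim1$ leaves $\mathrm{e}^{-t\rho^2/4}\rho^{-k}$. Since $\rho\geqslant R_0>4$ and $t\geqslant1$, this is $\lesssim\mathrm{e}^{-ct}\rho^{-N}$ for every $N$, so it is rapidly decaying in $\rho$ as well.

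Combining these estimates via the Leibniz rule with $\partial_\rho^k(b-a)^{-1}=O(\rho^{-2-k})$ gives the bound for $\widehat S_1$. The same argument applies to $\partial_t\widehat S_1=(-a\mathrm{e}^{-at}+b\mathrm{e}^{-bt})/(b-a)$: the factor $b\,\mathrm{e}^{-bt}$ is still rapidly decaying by the fast-root estimate.

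The main obstacle is the position multiplier. There the leading term $\frac{b}{b-a}\mathrm{e}^{-at}$ is of order zero, and it is not a priori clear why subtracting $\mathrm{e}^{-t}$ leaves order $-2$. We rewrite it as
\begin{align*}
\frac{b\,\mathrm{e}^{-at}}{b-a}-\mathrm{e}^{-t}=\frac{a}{b-a}\mathrm{e}^{-at}+\bigl(\mathrm{e}^{-at}-\mathrm{e}^{-t}\bigr).
\end{align*}
The first term is a symbol of order $-2$ times $\mathrm{e}^{-ct}$, since $a$ is a symbol of order $0$ by Lemma~\ref{Lemma-High-Roots}.

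For the second term we write
\begin{align*}
\mathrm{e}^{-at}-\mathrm{e}^{-t}=-t(a-1)\int_0^1\mathrm{e}^{-t(1+\vartheta(a-1))}\dd\vartheta.
\end{align*}
Here $a-1$ is a symbol of order $-2$. The integrand, differentiated in $\rho$, produces factors $t\vartheta\,\partial_\rho^{k_i}a$ bounded as in the slow-root estimate. Because $a-1\geqslant0$, the integrand is at most $\mathrm{e}^{-t}$. The factor $t^{m+1}$ is again absorbed into $\mathrm{e}^{-ct}$ with a smaller $c$.

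The remaining term $\frac{a}{b-a}\mathrm{e}^{-bt}$ is rapidly decaying by the fast-root estimate. For $\partial_t$ of this combination we obtain $\frac{-ab}{b-a}(\mathrm{e}^{-at}-\mathrm{e}^{-bt})+\mathrm{e}^{-t}$. This is handled the same way: we use $\frac{ab}{b-a}-1=\frac{a^2}{b-a}+(a-1)$, which is of order $-2$. Collecting all terms and shrinking $c$ yields both asserted estimates for $t\geqslant1$.
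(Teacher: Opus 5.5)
Your proposal is correct and follows essentially the same route as the paper. It uses the same Faà di Bruno slow-root/fast-root estimates under $a\geqslant1$, $b\geqslant\rho^2/2$, $t\geqslant1$, and the same splitting $\widehat S_0-\mathrm{e}^{-t}=(\mathrm{e}^{-at}-\mathrm{e}^{-t})+\frac{a}{b-a}(\mathrm{e}^{-at}-\mathrm{e}^{-bt})$ together with the integral identity for $\mathrm{e}^{-at}-\mathrm{e}^{-t}$. Your handling of the time derivative via $\frac{ab}{b-a}-1=\frac{a^2}{b-a}+(a-1)$ is only a cosmetic rearrangement of the paper's identity for $\mathrm{e}^{-t}-a\mathrm{e}^{-at}$, and your radial conversion spells out what the paper leaves as standard.
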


\begin{proof}
	Put $\rho:=|\zeta|$. By Lemma~\ref{Lemma-High-Roots} and the Fa\`a di Bruno formula, for every $k,N\in\mb N_0$,
	\begin{align*}
		\left|\partial_\rho^k\mathrm{e}^{-a(\rho)t}\right|&\leqslant C_k\,\mathrm{e}^{-ct}\rho^{-k},\\
		\left|\partial_\rho^k\mathrm{e}^{-b(\rho)t}\right|&\leqslant C_{k,N}\,\mathrm{e}^{-ct}\rho^{-N},
	\end{align*}
	for $t\geqslant1$ and $\rho\geqslant R_0$. Indeed, the polynomial factors in $t$ produced by differentiation are absorbed by the exponential decay, while $b(\rho)\geqslant\frac{\rho^2}{2}$ gives arbitrary negative order for the fast-root term. Employing additionally
	\begin{align*}
		\widehat S_1(t,\zeta)=\frac{\mathrm{e}^{-a(\rho)t}-\mathrm{e}^{-b(\rho)t}}{b(\rho)-a(\rho)},
	\end{align*}
	the first asserted estimate follows directly from Lemma~\ref{Lemma-High-Roots} and the Leibniz rule.
	
	For the first multiplier, we are able to write
	\begin{align*}
		\widehat S_0(t,\zeta)-\mathrm{e}^{-t}=\left(\mathrm{e}^{-a(\rho)t}-\mathrm{e}^{-t}\right)+\frac{a(\rho)}{b(\rho)-a(\rho)}\left(\mathrm{e}^{-a(\rho)t}-\mathrm{e}^{-b(\rho)t}\right).
	\end{align*}
	The identity
	\begin{align*}
		\mathrm{e}^{-a(\rho)t}-\mathrm{e}^{-t}=-\bigl(a(\rho)-1\bigr)t\int_0^1\mathrm{e}^{-t[1+\vartheta(a(\rho)-1)]}\dd\vartheta
	\end{align*}
	shows that the first term is an exponentially decaying symbol of order $-2$, and the second term has the same property by the preceding estimates.
	
	After one time derivative, the slow-root terms remain of order $-2$, whereas the additional factor $b(\rho)$ in the fast-root terms is absorbed by $\mathrm{e}^{-b(\rho)t}$. For the first difference in the position multiplier, we also use
	\begin{align*}
		\mathrm{e}^{-t}-a(\rho)\mathrm{e}^{-a(\rho)t}=-\left(\mathrm{e}^{-a(\rho)t}-\mathrm{e}^{-t}\right)-\bigl(a(\rho)-1\bigr)\mathrm{e}^{-a(\rho)t}.
	\end{align*}
	Finally, the standard differentiation formulas for radial symbols and the compact support of the derivatives of $\chi_{\high}$ complete the proof.
\end{proof}

\begin{coro}[Spatial decay of the regular high-frequency kernels]\label{Coro-High-Spatial}
	For every $N\in\mb N$, there exists $C_N>0$ such that, for $t\geqslant1$ and $|x|\geqslant 1$,
	\begin{align*}
		|G_1^{(3),\high}(t,x)|+|\nabla G_1^{(3),\high}(t,x)|&\leqslant C_N\,\mathrm{e}^{-ct}(1+|x|)^{-N},\\
		|G_0^{(3),\high,\reg}(t,x)|&\leqslant C_N\,\mathrm{e}^{-ct}(1+|x|)^{-N}.
	\end{align*}
\end{coro}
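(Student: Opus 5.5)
The plan is to deduce the corollary from Lemma~\ref{Lemma-High-Symbols} by the standard integration-by-parts argument that converts symbol bounds into spatial decay of kernels. Write
\begin{align*}
G_1^{(3),\high}(t,x)=\frac{1}{(2\pi)^3}\int_{\mb R^3}\mathrm{e}^{ix\cdot\zeta}\chi_{\high}(|\zeta|)\widehat S_1(t,\zeta)\dd\zeta ,
\end{align*}
and analogously for $G_0^{(3),\high,\reg}$ with multiplier $\chi_{\high}(|\zeta|)\bigl(\widehat S_0(t,\zeta)-\mathrm{e}^{-t}\bigr)$. The gradient is the same integral with the extra factor $i\zeta$. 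For $t\geqslant1$, Lemma~\ref{Lemma-High-Symbols} shows that these multipliers, uniformly in $t$ after extracting $\mathrm{e}^{-ct}$, are symbols of order $-2$; with the factor $\zeta$ included, they are of order $-1$.

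Since the symbols are not integrable (order $-2$ in three dimensions), the integral is understood distributionally. For $|x|\geqslant1$, I would use the identity $x^\gamma\ml F^{-1}[m]=\ml F^{-1}[(i\partial_\zeta)^\gamma m]$ with $|\gamma|=k$. For $k\geqslant3$, the differentiated symbol obeys $|\partial_\zeta^\gamma m|\lesssim \mathrm{e}^{-ct}\langle\zeta\rangle^{-2-k}$ (or $\langle\zeta\rangle^{-1-k}$ for the gradient), which is integrable once $k\geqslant3$. Choosing $k=N+3$ and summing over $|\gamma|=k$ gives
\begin{align*}
|x|^{k}\,|G(t,x)|\lesssim \mathrm{e}^{-ct}\sum_{|\gamma|=k}\|\partial_\zeta^\gamma m(t,\cdot)\|_{L^1}\lesssim \mathrm{e}^{-ct}.
\end{align*}
Hence $|G(t,x)|\lesssim \mathrm{e}^{-ct}|x|^{-k}\lesssim\mathrm{e}^{-ct}(1+|x|)^{-N}$ on $|x|\geqslant1$, where $|x|\approx 1+|x|$.

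The only delicate point is justifying that the distributional kernel coincides with this absolutely convergent representation off the origin. I would handle this by inserting a cutoff $\phi(\zeta/R)$, performing the integrations by parts with no boundary terms, and letting $R\to\infty$. The derivatives falling on $\phi(\cdot/R)$ are supported where $|\zeta|\approx R$ and carry factors $R^{-j}$, so by the symbol bounds their $L^1$ contribution is $O(R^{3-2-k})\to0$. Consequently, the limit identifies the kernel as a continuous function on $|x|\geqslant1$ satisfying the bound.

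I expect no genuine obstacle here; the step needing the most care is the uniformity of the constants in $t\geqslant1$, which is exactly what the $\mathrm{e}^{-ct}$ factor in Lemma~\ref{Lemma-High-Symbols} provides.
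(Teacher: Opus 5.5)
Your proposal is correct and follows the same route as the paper. The paper likewise turns the uniform $\mathrm{e}^{-ct}$-weighted symbol bounds of Lemma~\ref{Lemma-High-Symbols} into spatial decay by integrating by parts in the Fourier inversion formula, after removing the constant multiplier $\mathrm{e}^{-t}$ for the position kernel; it uses powers of $-\Delta_\zeta$ (weights $|x|^{2k}$) where you use monomials $x^\gamma$. Your truncation by $\phi(\zeta/R)$ supplies the distributional justification that the paper leaves implicit.
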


\begin{proof}
	Apply sufficiently many powers of $-\Delta_\zeta$ to the multipliers in Lemma~\ref{Lemma-High-Symbols} and integrate by parts in the Fourier inversion formula for $|x|\geqslant 1$. The resulting differentiated symbols are integrable and hence give arbitrary spatial decay. For the position kernel, the contribution of the constant multiplier $\mathrm{e}^{-t}$ corresponds to the singular term $\mathrm{e}^{-t}\delta_0$ in physical space. After removing this contribution, the remaining high-frequency multiplier satisfies the symbol estimates in Lemma~\ref{Lemma-High-Symbols}, and the same argument applies.
\end{proof}

\section{Remainder estimates in the wave-front region}\label{Appendix-Front}

We complete the proof of Proposition~\ref{Prop-Front-Asymptotics}. Let $\chi_{\low}$ be the cutoff fixed in \cref{Section-Linear}. On its support, \eqref{Omega-Expansion} implies
\begin{align}\label{Omega-Error-Bounds}
	|\omega(\rho)-\rho|\lesssim\rho^3\ \ \text{and}\ \ \left|\frac1{\omega(\rho)}-\frac1\rho\right|\lesssim\rho.
\end{align}

To treat the velocity (second) kernel, let us define the sinc error
\begin{align*}
	E_1(t,\rho):=\mathrm{e}^{-\frac{t\rho^2}{2}}\left(\frac{\sin(t\omega(\rho))}{\omega(\rho)}-\frac{\sin(t\rho)}{\rho}\right).
\end{align*}
By using \eqref{Omega-Error-Bounds}, one easily gets
\begin{align}\label{E1-Bound}
	|E_1(t,\rho)|\lesssim\mathrm{e}^{-\frac{t\rho^2}{2}}(\rho+t\rho^2).
\end{align}
Let $r=t-\sigma\sqrt t$, where $\sigma$ belongs to a fixed compact subset of $(0,\infty)$. Then $r\approx t$, and the radial inversion formula as well as \eqref{E1-Bound} imply
\begin{align*}
	\left|\ml F^{-1}_{\rho\to r}[\chi_{\low}E_1](r)\right|\lesssim\frac1r\int_0^{2\rho_0}\mathrm{e}^{-\frac{t\rho^2}{2}}(\rho^2+t\rho^3)\dd\rho\lesssim t^{-2}.
\end{align*}
The contribution from the complementary frequency region is exponentially small. This proves the remainder estimate in \eqref{Front-G1}. Differentiating the radial inversion formula gives
\begin{align*}
	\partial_r\ml F^{-1}_{\rho\to r}[\chi_{\low}E_1](r)&=-\frac1{2\pi^2r^2}\int_0^\infty\chi_{\low}E_1(t,\rho)\rho\sin(r\rho)\dd\rho\\
	&\quad+\frac1{2\pi^2r}\int_0^\infty\chi_{\low}E_1(t,\rho)\rho^2\cos(r\rho)\dd\rho.
\end{align*}
The first term is $O(t^{-3})$. For the second term, \eqref{E1-Bound} yields
\begin{align*}
	\frac1t\int_0^{2\rho_0}\mathrm{e}^{-\frac{t\rho^2}{2}}(\rho^3+t\rho^4)\dd\rho
	&\lesssim\frac1t\left(t^{-2}+t^{1-\frac52}\right)\lesssim t^{-\frac52}.
\end{align*}
This demonstrates the remainder estimate in \eqref{Front-Dr-G1}.
\medskip

We consider now the position kernel. Set
\begin{align*}
	E_0(t,\rho):=\mathrm{e}^{-\frac{t\rho^2}{2}}\left(\cos\bigl(t\omega(\rho)\bigr)-\cos(t\rho)+\frac{\rho^2}{2\omega(\rho)}\sin\bigl(t\omega(\rho)\bigr)\right).
\end{align*}
By applying \eqref{Omega-Error-Bounds} again, one easily gets
\begin{align*}
	|E_0(t,\rho)|\lesssim\mathrm{e}^{-\frac{t\rho^2}{2}}(\rho+t\rho^3).
\end{align*}
Therefore, for $r=t-\sigma\sqrt t$ with $\sigma$ in a fixed compact subset of $(0,\infty)$, we may estimate
\begin{align*}
	\left|\ml F^{-1}_{\rho\to r}[\chi_{\low}E_0](r)\right|\lesssim t^{-1}\int_0^{2\rho_0}\mathrm{e}^{-\frac{t\rho^2}{2}}(\rho^2+t\rho^4)\dd\rho\lesssim t^{-\frac52}.
\end{align*}
The middle-frequency contribution is exponentially small by \eqref{Middle-Matrix}, while the high-frequency regular part is handled by Corollary~\ref{Coro-High-Spatial}. This completes the proof of Proposition~\ref{Prop-Front-Asymptotics}.

\section{Rapid decay in strict interior regions}\label{Appendix-Interior}

We prove Lemma~\ref{Lemma-Interior-Decay} here. The argument relies on the fact that the low-frequency phases have no stationary points in a strict interior region. We first establish the corresponding kernel estimates.

Fix $0<a<b<1$. Extending $\omega$ to an odd smooth function on $(-2\rho_0,2\rho_0)$ and decreasing $\rho_0$ if necessary, we may assume that
\begin{align}\label{Omega-Prime-Lower}
	\omega'(\rho)\geqslant\frac{1+b}{2}\ \ \text{for}\ \ |\rho|\leqslant2\rho_0.
\end{align}
By using the radial Fourier inversion formula, one expresses the kernel by
\begin{align*}
	rG_1^{(3),\low}(t,r)=\frac{1}{2\pi^2}\int_0^\infty\chi_{\low}(\rho)\mathrm{e}^{-\frac{t\rho^2}{2}}\frac{\rho}{\omega(\rho)}\sin\bigl(t\omega(\rho)\bigr)\sin(r\rho)\dd\rho.
\end{align*}
Since $\frac{\rho}{\omega(\rho)}$ extends smoothly across the origin as an even function, the function $a_1(\rho):=\chi_{\low}(\rho)\frac{\rho}{\omega(\rho)}$ satisfies $a_1\in\mathcal{C}_0^\infty\bigl((-2\rho_0,2\rho_0)\bigr)$ and $a_1(-\rho)=a_1(\rho)$. Hence, by the product-to-sum identity and parity,
\begin{align*}
	rG_1^{(3),\low}(t,r)=\frac{1}{8\pi^2}\bigl(I_-(t,r)-I_+(t,r)\bigr),
\end{align*}
where $I_\pm(t,r)$ are defined in \eqref{Interior-Oscillatory-General} with $a(\rho)=a_1(\rho)$.

The same argument applies to the position kernel, in other words,
\begin{align}\label{Interior-G0-Integral}
	rG_0^{(3),\low,\reg}(t,r)=\frac{1}{2\pi^2}&\int_0^\infty\chi_{\low}(\rho)\mathrm{e}^{-\frac{t\rho^2}{2}}\notag\\
	&\quad\quad\times\left[\rho\cos\bigl(t\omega(\rho)\bigr)+\frac{\rho^3}{2\omega(\rho)}\sin\bigl(t\omega(\rho)\bigr)\right]\sin(r\rho)\dd\rho.
\end{align}
Notice that $\chi_{\low}(\rho)\rho$ extends as an odd function in $\mathcal{C}_0^\infty\bigl((-2\rho_0,2\rho_0)\bigr)$, whereas $\chi_{\low}(\rho)\frac{\rho^3}{\omega(\rho)}$ extends as an even function in the same space. Hence, by the product-to-sum identities and parity, \eqref{Interior-G0-Integral} can also be written as a finite linear combination of oscillatory integrals of the form
\begin{align}\label{Interior-Oscillatory-General}
	I_\pm(t,r):=\int_{\mb R}\mathrm{e}^{-\frac{t\rho^2}{2}}a(\rho)\mathrm{e}^{i[t\omega(\rho)\pm r\rho]}\dd\rho.
\end{align}
It therefore remains to estimate \eqref{Interior-Oscillatory-General}.

Setting $y=\sqrt{t}\rho$, we obtain
\begin{align*}
	I_\pm(t,r)=t^{-\frac{1}{2}}\int_{\mb R}\mathrm{e}^{-\frac{y^2}{2}}a\left(\frac{y}{\sqrt{t}}\right)\mathrm{e}^{i\Psi_\pm(y)}\dd y
\end{align*}
with the phase function
\begin{align*}
\Psi_\pm(y):=t\omega\left(\frac{y}{\sqrt{t}}\right)\pm\frac{r}{\sqrt{t}}y.
\end{align*}
For $at\leqslant r\leqslant bt$, it follows from \eqref{Omega-Prime-Lower} that
\begin{align}\label{Nonstationary-Phase}
	|\Psi_-'(y)|&=\sqrt{t}\left|\omega'\left(\frac{y}{\sqrt{t}}\right)-\frac{r}{t}\right|\geqslant\frac{1-b}{2}\sqrt{t},\\
	|\Psi_+'(y)|&=\sqrt{t}\left[\omega'\left(\frac{y}{\sqrt{t}}\right)+\frac{r}{t}\right]\geqslant\left(\frac{1+b}{2}+a\right)\sqrt{t},
\end{align}
whenever $a\left(\frac{y}{\sqrt{t}}\right)\neq0$. Moreover, for any integer $m\geqslant2$, one arrives at
\begin{align}\label{Interior-Phase-Derivatives}
	\Psi_\pm^{(m)}(y)=t^{1-\frac{m}{2}}\omega^{(m)}\left(\frac{y}{\sqrt{t}}\right),
\end{align}
and consequently
\begin{align*}
	|\Psi_\pm^{(m)}(y)|\leqslant C_m t^{1-\frac{m}{2}}\ \ \text{for}\ \ a\left(\frac{y}{\sqrt{t}}\right)\neq0.
\end{align*}

Let us introduce
\begin{align*}
	\ml L_\pm:=\frac{1}{i\Psi_\pm'(y)}\partial_y\ \ \Rightarrow\ \ \ml L_\pm\mathrm{e}^{i\Psi_\pm(y)}=\mathrm{e}^{i\Psi_\pm(y)}.
\end{align*}
Its formal adjoint is given by
\begin{align*}
	\ml L_\pm^*f=-\partial_y\left(\frac{f}{i\Psi_\pm'(y)}\right).
\end{align*}
After integrating by parts $M\in\mb{N}$ times, we obtain
\begin{align}\label{Interior-IBP}
	I_\pm(t,r)=t^{-\frac{1}{2}}\int_{\mb R}\mathrm{e}^{i\Psi_\pm(y)}(\ml L_\pm^*)^M\left[\mathrm{e}^{-\frac{y^2}{2}}a\left(\frac{y}{\sqrt{t}}\right)\right]\dd y.
\end{align}
No boundary terms arise since the amplitude in \eqref{Interior-IBP} is smooth and compactly supported.

It follows from \eqref{Nonstationary-Phase} and \eqref{Interior-Phase-Derivatives} that, for every integer $j\geqslant0$, one computes
\begin{align*}
	\left|\partial_y^j\left(\frac{1}{\Psi_\pm'(y)}\right)\right|\leqslant C_jt^{-\frac{j+1}{2}}
\end{align*}
on the support of $a\left(\frac{y}{\sqrt{t}}\right)$. For all integers $j,K\geqslant0$, one has
\begin{align*}
	\left|\partial_y^j\left[\mathrm{e}^{-\frac{y^2}{2}}a\left(\frac{y}{\sqrt{t}}\right)\right]\right|\leqslant C_{j,K}\langle y\rangle^{-K}.
\end{align*}
Hence, by induction on $M$, for every integer $K\geqslant0$, we derive
\begin{align}\label{Interior-Adjoint-Bound}
	\left|(\ml L_\pm^*)^M\left[\mathrm{e}^{-\frac{y^2}{2}}a\left(\frac{y}{\sqrt{t}}\right)\right]\right|\leqslant C_{M,K}t^{-\frac{M}{2}}\langle y\rangle^{-K}.
\end{align}
Taking $K>1$ in \eqref{Interior-Adjoint-Bound}, we arrive at
\begin{align*}
	\sup_{at\leqslant r\leqslant bt}|I_\pm(t,r)|\leqslant C_Mt^{-\frac{M+1}{2}}.
\end{align*}
Since $M$ is arbitrary, the representations above imply that, for every $N>0$,
\begin{align}\label{Low-Interior-Rapid}
	\sup_{at\leqslant r\leqslant bt}r\left(|G_1^{(3),\low}(t,r)|+|G_0^{(3),\low,\reg}(t,r)|\right)\leqslant C_Nt^{-N}.
\end{align}

For the middle-frequency part, Lemma~\ref{Lemma-Middle-Multiplier} and the compactness of the frequency support imply exponential decay in time together with rapid decay in space. The regular high-frequency kernels satisfy the same bounds by Corollary~\ref{Coro-High-Spatial}. Moreover, the singular part of the position propagator is given by $\mathrm{e}^{-t}\delta_0$ and hence contributes $\mathrm{e}^{-t}u_0(x)$ after convolution. Since $u_0$ is compactly supported, this term vanishes in the region $|x|\approx t$ for all sufficiently large $t$.

It remains to pass from the kernel estimates to compactly supported initial data. Let
\begin{align*}
	\supp u_0\cup\supp u_1\subset B_R\ \ \text{with some}\  \  R>0.
\end{align*}
Provided that $at\leqslant|x|\leqslant bt$ and $|y|\leqslant R$, then, for all sufficiently large $t$,
\begin{align*}
	\frac{a}{2}t\leqslant|x-y|\leqslant\frac{1+b}{2}t\ \ \text{and}\ \ |x|\lesssim|x-y|.
\end{align*}
As a result, \eqref{Low-Interior-Rapid}, applied uniformly to the translated kernels $G_j(t,x-y)$, together with the compact support of the initial data, yields our desired estimate \eqref{Supp-07}.

\section*{Acknowledgments}
Wenhui Chen is supported in part by the National Natural Science Foundation of China (grant No. 12301270) and the Guangdong Basic and Applied Basic Research Foundation (grant No. 2025A1515010240).

\end{document}